\definecolor{Red}{cmyk}{0,1,1,0}
\definecolor{verde}{cmyk}{1,0,1,0}
\definecolor{azul}{cmyk}{1,1,0,0}
\newtheorem{teo}{Theorem}[section]
\newtheorem{lema}[teo]{Lemma}
\newtheorem{defnc}[teo]{Definition}
\newtheorem{corollary}[teo]{Corollary}
\newtheorem{prop}[teo]{Proposition}
\newtheorem{obs}[teo]{Remark}
\newcommand{\ba}{\textbf{a}}
\newcommand{\rar}{\rightarrow}
\begin{document}

\title{
Spectral Properties of the Ruelle Operator 
on the Walters Class over Compact Spaces}

\author{Leandro Cioletti\qquad and\qquad Eduardo A. Silva}
\date{\today}
\maketitle

\begin{abstract}
Recently the Ruelle-Perron-Fr\"obenius theorem was proved for H\"older potentials 
defined on the symbolic space $\Omega=M^{\mathbb{N}}$, 
where (the alphabet) $M$ is any compact metric space.
In this paper, we extend this theorem to the Walters space $W(\Omega)$, 
in similar general alphabets.
We also describe in detail an abstract procedure 
to obtain the Fr\'echet-analyticity of the Ruelle
operator under quite general conditions and 
we apply this result to prove the analytic dependence 
of this operator on both Walters and H\"older spaces.
The analyticity of the pressure functional on H\"older spaces
is established. An exponential decay of the
correlations is shown when the Ruelle operator has the spectral gap property. 

A new (and natural) family of Walters potentials 
(on a finite alphabet derived from the Ising model)
not having an exponential decay of the correlations is presented. 
Because of the lack of exponential decay, for such potentials we have the 
absence of the spectral gap for the Ruelle operator.
The key idea to prove the lack of exponential decay  
of the correlations are the Griffiths-Kelly-Sherman inequalities.

\end{abstract}
\noindent 
{\small {\bf Key-words}: 
Thermodynamic formalism, Ruelle operator,  
one-dimensional lattice, Analyticity of Pressure,
Spectral Gap.}
\\
\noindent 
{\small {\bf MSC2010}: 37A60, 37A50, 82B05.}
\section{Introduction}

The Ruelle-Perron-Fr\"obenius theorem is one of the most 
important results in modern thermodynamic formalism.
Nowadays the Ruelle operator have become a standard tool in many 
different areas in dynamical systems and
other branches of Mathematics and Mathematical Physics.
The literature about Ruelle-Perron-Fr\"obenius theorem is vast, 
the following is a partial list of books and papers
on this subject \cite{Baladi,bo,Fan,FJ,Jiang,PP,Ruelle-1968,Viana}.

The classical thermodynamic formalism was in its origin developed 
in the Bernoulli space $M^{\mathbb{N}}$, with $M$ being a finite alphabet, see \cite{PP}. 
This assumption on $M$ allows one conjugates, using a Markov partitions, 
the shift maps on the Bernoulli space with uniform hyperbolic maps in differentiable manifolds.
 
The Ruelle operator formalism is also proved useful in the 
multifractal analysis context. Bowen in the seminal work
\cite{boo} established a relationship between Hausdorff dimension 
of certain fractal sets and topological pressure in the 
context of conformal dynamics in one dimension. This technique
is known as Bowen's equation  and in subsequent works it
was extended by Manning and McCluskey for dynamical systems on surfaces 
aiming to compute fractal dimension of horseshoes, see for more
details \cite{boo,ma,MM} 
and also the introductory texts \cite{Barreira,Pesin}.

The motivation to consider more general alphabets from 
dynamical system point of view is given, 
for example, in \cite{Sarig 1,Sarig 2} 
where proposed models with infinite alphabet $M=\mathbb{N}$ is used to 
describe non-uniformly hyperbolic maps, 
for example, the Manneville-Pomeau maps.
In Classical Statistical Mechanics uncountable alphabets 
shows up, for example, in the so-called $O(n)$ models
with $n\geq 2$. In these models the alphabet is
$\mathbb{S}^{n-1}$ the unit sphere in the 
Euclidean space $\mathbb{R}^n$, for details see \cite{GJ-Book}.
Unbounded alphabets as general standard Borel spaces, which includes 
compact and non-compact, are considered in details in \cite{Geogii88}.
We should mention that ergodic optimization problems are also being considered 
in infinite countable alphabets, see \cite{BG,Daon,JMUrb,Sarig 1}.

Walters work \cite{PW78} marked the beginning of three 
decades of great activity in thermodynamic formalism 
where less regular than H\"older potentials were considered. 
This class of potentials is called Walters class or alternatively
Walters space.
A rather complete theory in Walters space were developed
for finite and countable alphabets, see \cite{Daon}. 
In the Walters paper, the dynamical system is supposed to be defined on a class of
compact sets, expansive and mixing and 
the potentials can be any positive summable 
variation function.

The aim of this work is to extend some results obtained in \cite{PW78}
as the Ruelle-Perron-Fr\"obenius theorem, 
the analytic dependence the Ruelle operator with respect
to the potential, spectral properties of this operator and some consequences
of either presence or absence of the spectral gap, as 
for example the pressure analyticity.
The main difficulty in carrying out the construction of the Ruelle operator
for uncountable alphabets is overcome by the introduction of 
what we call an {\it a-priori} probability measure on $M$,
which is a common strategy in the theory of general DLR-Gibbs measures.

This paper is organized as follows. In Section 2, we prove under 
quite general conditions the analyticity of the Ruelle operator and 
its dual with 
respect to the potential $f$. 
The result is more general, in fact we show that if the Ruelle operator 
$\mathscr{L}_f$ leaves invariant a Banach algebra $\mathcal{K}\subset C(\Omega)$
for all $f\in \mathcal{K}$, then the maps 
$\mathcal{K} \ni f\to \mathscr{L}_f$, $\mathcal{K}^{*} \ni f\to \mathscr{L}_f^{*}$ 
are analytic. 
We use this result in Section 5 with $\mathcal{K}=C^{\gamma}(\Omega)$ 
to obtain the analyticity of the topological pressure 
$P: C^{\gamma }(\Omega)\to \mathbb{R}$, which extends the analogous results 
known in finite/discrete alphabets. 
We shall remark that even for discrete case, despite this being a folkloric
result we were not able to find its rigorous proof in the literature.

Section 3 deals with the Walters space, notation $W(\Omega)$. 
In the discrete setting there are 
several known equivalent characterizations of the Walters condition,
however in a more general setting as compact metric spaces, things 
are more subtle. We show that the natural generalization 
of the two most popular characterizations of the Walters class are not 
equivalent when the alphabet is uncountable. We give an explicit example illustrating 
this fact. We introduce what we call weak and strong Walters conditions.
Finally a generalization of \cite{PW78} is proved for general compact
alphabets. We remark that this theorem is also a non-trivial generalization
of one the main theorems in \cite{le,LMMS} where the Ruelle operator
on uncountable alphabet are taken into account.

In Section 7 we introduce a new family of potentials for which 
the Ruelle operator has absence of the spectral gap. This section
is heavily based on the ideas borrowed from Statistical Mechanics
and the Griffiths-Kelly-Sherman (GKS) inequalities, 
\cite{griffiths1,griffiths2,KS68,ginibre70}. 
For the convenience of the reader we precisely stated all the theorems
we need only in the needed generality but 
we provide its classical references where general settings
are presented. Some Ising model routine computations are 
presented in details in order to make 
our exposition self-contained
for non-specialists in Statistical Mechanics.
These potentials belong to a infinite dimensional linear
subspace of $C(\Omega,\mathbb{R})$ whose intersection
with the Walters space is an infinite dimensional linear 
subspace not contained in the H\"older space. On that
space Dobrushin in \cite{dobrushin73} 
by using estimates of mean value of 
exponential functionals of random processes 
and latter Cassandro and Olivieri 
\cite{CO81}
 employing a renormalization 
group idea together with the cluster expansion 
proved analyticity of the pressure. 
It worth mention that the subexponential 
decay obtained in Section 5
can not be recovered from the seminal Sarig's work \cite{Sarig02}
about subexponential decay of correlations
and neither from the improvement provided by 
Gou\"ezel in \cite{Gouezel04}. 
The examples presented in Section 5 can 
shed some light in potential applications of the 
GKS inequalities to study the absence of the spectral gap
in other situations.

\section{Basic Definitions}

In this section we setup the notation and present 
some preliminaries results.
Let $M=(M,d)$ be a compact metric space, equipped 
with a Borel probability measure $\mu:\mathscr{B}(M)\to [0,1]$ 
having the whole space $M$ as its support.  
We shall denote by $\Omega=M^{\mathbb{N}}$ 
the set of all sequences $x=(x_1,x_2,\ldots )$, where 
$x_i \in M$, for all $i\in {\mathbb N}$. 
We denote by $\sigma:\Omega\to\Omega$ 
the left shift mapping which is given 
by $\sigma(x_1,x_2,\ldots)=(x_2,x_3,\ldots)$.
We consider the metric $d_{\Omega}$ on $\Omega$ given by
\[
d_{\Omega} (x,y)
=
\sum_{n=1}^{\infty} \frac{1}{2^n}d(x_n,y_n)
\]
The metric $d_{\Omega}$ induces the product topology
and therefore follows from the Tychnoff Theorem that 
$(\Omega,d_{\Omega})$ is a compact metric space.
The space of all the continuous real functions $C(\Omega, \mathbb R)$
is denoted simply by $C(\Omega)$. 
For any fixed $0\leq \gamma\leq 1$ we denote by $C^{\gamma}(\Omega)$ 
the space of all $\gamma$-H\"older continuous functions, 
i.e, the set of all functions $f:\Omega\to\mathbb{R}$ satisfying 
\[
\mathrm{Hol}(f)
=
\sup_{x,y\in\Omega: x\neq y}
\dfrac{|f(x)-f(y)|}{d_{\Omega}(x,y)^{\gamma}}
<+\infty.
\] 

We equip $C^{\gamma}(\Omega),~0\leq \gamma\leq 1$ with the norm 
$\|\cdot\|_{\gamma}$ which is defined for $\gamma=0$ by
$\|f\|_0=\sup_{x\in \Omega} |f(x)|$ and for 
$0< \gamma\leq 1$ by  
$\|f\|_{\gamma}= \|f\|_0+ \mathrm{Hol}(f)$.
We recall that 
$(C^{\gamma}(\Omega),\|\cdot\|_{\gamma}) $ is a Banach space for
any $0\leq \gamma \leq 1$.

Our {\bf potentials} will be elements of $C(\Omega)$
and in order to have a well defined Ruelle operator 
when $(M,d)$ is a general compact metric space we need 
to consider an {\it a priori measure} which is simply a 
Borel probability measure $\mu:\mathscr{B}(M)\to \mathbb{R}$,
where $\mathscr{B}(M)$ denotes the Borel $\sigma$-algebra 
of $M$. For many of the most popular choices of an uncountable
space $M$ there is a natural {\it a priori measure} $\mu$. 
Throughout this paper the a priori measure $\mu$ 
is supposed to have the whole space $M$ as its support. 
The Ruelle operator 
$\mathscr{L}_f: C^{\gamma}(\Omega) \to C^{\gamma}(\Omega)$
is the mapping sending $\varphi \mapsto \mathscr{L}_{f}(\varphi)$ 
which is defined for any $x\in\Omega$ by the expression
\[
\mathscr{L}_f(\varphi)(x)
=
\int_M e^{f(ax)}\varphi(ax)d\mu(a),
\]
where $ax$ denotes the sequence 
$ax=(a, x_1, x_2, \ldots)\in \Omega$. 

This operator is a generalization of the classical 
Ruelle operator and has been appeared lately 
in the Thermodynamic Formalism literature, see for example 
\cite{le,LMMS,ERR}.
The classical Ruelle operator can be recovered on this  
setting by considering $M=\{0,1,\ldots,n\}$ and 
the {\it a priori} $\mu$ as the counting measure. 
Our starting point is the following theorem.

\begin{teo}[Ruelle-Perron-Fr\"obenius]\label{Ruelle-Perron}
Let $(M,d)$ be a compact metric space, $\mu$ a 
Borel probability  measure of full support on $M$ and 
$f$ be a potential in $C^{\gamma}(\Omega)$, 
where $0<\gamma<1$. Then
$\mathscr{L}_f: C^{\gamma}(\Omega) \to C^{\gamma}(\Omega)$
have a simple positive eigenvalue of maximal modulus $\lambda_f$, 
and there are a strictly positive function $h_f$ 
and a Borel probability measure $\nu_{f}$ on $\Omega$ such that,
\begin{itemize}
\item[i)] 
the remainder of the spectrum of 
$\mathscr{L}_f: C^{\gamma}(\Omega) \to C^{\gamma}(\Omega)$
is contained in 
a disc of radius strictly smaller then $\lambda_f$;

\item[ii)] 
for all continuous functions $\varphi\in C(\Omega)$ we have
\[
\lim_{n\to\infty}
\left\|
\lambda_{f}^{-n}\mathscr{L}^{n}_{f}\varphi-h_f\int_{\Omega} \varphi\,  d\nu_{f}
\right\|_0
=
0.
\]
\end{itemize}
\end{teo}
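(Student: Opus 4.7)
The strategy follows the classical Ruelle--Perron--Fr\"obenius blueprint, adapted to uncountable alphabets via the \emph{a priori} measure $\mu$. Its cornerstone is a bounded distortion estimate. For $a=(a_1,\dots,a_n)\in M^n$ and $x,y\in\Omega$ a direct computation gives $d_\Omega(\sigma^k(ax),\sigma^k(ay))=2^{-(n-k)}d_\Omega(x,y)$ for $0\le k\le n-1$, whence the Birkhoff sum $S_nf(z):=\sum_{k=0}^{n-1}f(\sigma^k z)$ satisfies
\[
|S_nf(ax)-S_nf(ay)|\ \le\ \mathrm{Hol}(f)\sum_{k=0}^{n-1}2^{-\gamma(n-k)}d_\Omega(x,y)^\gamma\ \le\ D\,d_\Omega(x,y)^\gamma,
\]
with a constant $D$ independent of $n$ and $a$. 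Equivalently, the kernel ratio $e^{S_nf(ax)}/e^{S_nf(ay)}$ is controlled by $\exp(D\,d_\Omega(x,y)^\gamma)$; this uniform regularity replaces the usual finite-alphabet combinatorics.

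First I would produce the dual eigenmeasure. The normalization map $T(\nu):=\mathscr{L}_f^*\nu/\nu(\mathscr{L}_f\mathbf{1})$ sends the weak-$*$-compact convex set of Borel probability measures on $\Omega$ continuously into itself, so the Schauder--Tychonoff theorem delivers a fixed point $\nu_f$ with $\mathscr{L}_f^*\nu_f=\lambda_f\nu_f$, for $\lambda_f:=\nu_f(\mathscr{L}_f\mathbf{1})>0$. The eigenfunction I would extract from the Ces\`aro averages $h_n:=\tfrac1n\sum_{k=0}^{n-1}\lambda_f^{-k}\mathscr{L}_f^k\mathbf{1}$: each iterate $\lambda_f^{-k}\mathscr{L}_f^k\mathbf{1}$ has $\nu_f$-integral equal to one, and the distortion bound pins its oscillation, yielding uniform estimates $c\le\lambda_f^{-k}\mathscr{L}_f^k\mathbf{1}\le C$ together with a uniform H\"older constant. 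Arzel\`a--Ascoli then produces a subsequential limit $h_f\in C^\gamma(\Omega)$; the identity $\mathscr{L}_f h_n-\lambda_f h_n=\tfrac{\lambda_f}{n}\bigl(\lambda_f^{-n}\mathscr{L}_f^n\mathbf{1}-\mathbf{1}\bigr)\to 0$ in $C(\Omega)$ forces $\mathscr{L}_f h_f=\lambda_f h_f$, while $h_f>0$ and $\int h_f\,d\nu_f=1$ follow from the uniform bounds after rescaling.

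The main obstacle is the spectral gap (i) together with the convergence (ii). My plan is to pass to the normalized Markov operator $\mathcal{L}\varphi:=(\lambda_f h_f)^{-1}\mathscr{L}_f(h_f\varphi)$, which satisfies $\mathcal{L}\mathbf{1}=\mathbf{1}$ and $\mathcal{L}^*\tilde\nu_f=\tilde\nu_f$ with $d\tilde\nu_f=h_f\,d\nu_f$. Re-running the distortion computation for $\mathcal{L}^n$ yields a Lasota--Yorke type inequality
\[
\mathrm{Hol}(\mathcal{L}^n\varphi)\ \le\ \theta^n\,\mathrm{Hol}(\varphi)+B\,\|\varphi\|_0,\qquad 0<\theta<1,
\]
where the contraction factor $\theta^n$ is inherited from the $2^{-\gamma n}$ produced by appending a length-$n$ prefix, and the $\|\varphi\|_0$ term absorbs the distortion constant. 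Since the inclusion $C^\gamma(\Omega)\hookrightarrow C(\Omega)$ is compact by Arzel\`a--Ascoli, the Ionescu-Tulcea--Marinescu theorem delivers quasi-compactness of $\mathcal{L}$ on $C^\gamma(\Omega)$. Positivity combined with topological mixing of the full shift and the full support of $\tilde\nu_f$ then collapses the peripheral spectrum to the simple eigenvalue $1$. Translating back through the conjugation, $\lambda_f$ is simple and the rest of the spectrum of $\mathscr{L}_f$ lies in a disc of radius strictly less than $\lambda_f$, which is (i). The associated decomposition $\mathcal{L}^n\varphi=\tilde\nu_f(\varphi)\mathbf{1}+R^n\varphi$ with $\|R^n\|\to 0$ geometrically in $C^\gamma(\Omega)$, together with the density of $C^\gamma(\Omega)$ in $C(\Omega)$ and the contraction $\|\mathcal{L}^n\|_{C(\Omega)\to C(\Omega)}\le 1$, extends the uniform convergence in (ii) to every $\varphi\in C(\Omega)$.
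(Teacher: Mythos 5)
The paper itself offers no proof of this theorem: the ``proof'' is the single sentence ``See \cite{le} for the case $M=S^1$ and \cite{LMMS} for more general compact metric spaces,'' supplemented by a remark that item (ii) extends from $C^{\gamma}(\Omega)$ to $C(\Omega)$ by density. Your sketch is therefore a genuine reconstruction rather than a paraphrase, and it is essentially correct: the distortion estimate $d_\Omega(\sigma^k(ax),\sigma^k(ay))=2^{-(n-k)}d_\Omega(x,y)$ and its consequence for Birkhoff sums are right, the Schauder--Tychonoff fixed point on probability measures gives $\nu_f$ and $\lambda_f$, the Ces\`aro/Arzel\`a--Ascoli argument gives $h_f\in C^\gamma(\Omega)$ strictly positive, and the Lasota--Yorke inequality for the normalized Markov operator plus the compact embedding $C^\gamma(\Omega)\hookrightarrow C(\Omega)$ triggers Ionescu-Tulcea--Marinescu quasi-compactness. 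The route does differ, however, from what the paper actually does in the setting it proves itself (the Walters-class version, Theorem \ref{Ruelle-Walters}): there the authors obtain both $\nu_f$ and $h_f$ via two separate Schauder--Tychonoff fixed-point arguments on convex compact sets and then derive the convergence (ii) directly from equicontinuity, never invoking quasi-compactness; that method yields (ii) but not (i). Your quasi-compactness route is the more efficient path to the actual statement here, since it delivers the spectral gap (i) and the exponential convergence simultaneously, with (ii) for general $\varphi\in C(\Omega)$ following, exactly as in the paper's remark, by density of $C^\gamma(\Omega)$ in $C(\Omega)$ and the uniform bound $\|\mathcal{L}^n\|_{C(\Omega)\to C(\Omega)}\le 1$. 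One point you should make explicit rather than leave implicit in the phrase ``positivity $+$ mixing $+$ full support'': after ITM reduces the peripheral spectrum to finitely many finite-multiplicity eigenvalues on $|z|=1$, you still need the standard argument that if $\mathcal{L}\psi=e^{i\theta}\psi$ then $|\psi|\le\mathcal{L}|\psi|$, integration against the invariant $\tilde\nu_f$ of full support forces equality everywhere, the fixed-point-is-constant argument (mixing) gives $|\psi|\equiv\mathrm{const}$, and the equality case in $|\int p\,\psi|\le\int p\,|\psi|$ then forces $e^{i\theta}=1$ and $\psi$ constant. With that step spelled out, the sketch is complete.
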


\begin{proof}
See \cite{le} for the case $M=S^1$ and \cite{LMMS} 
for more general compact metric spaces.
\end{proof}

\begin{obs}
Strictly speaking, in \cite{le} and \cite{LMMS} the item $(ii)$ above was proved
only for Hölder continuous potentials. 
However this is enough since the space of the Hölder continuous potentials 
is dense in $(C(\Omega),\|\cdot\|_{0})$.  
Therefore a straightforward computation shows that 
the convergence on the item $(ii)$ holds for all $\varphi\in C(\Omega)$.
The denseness of $C^{\gamma}(\Omega)$, $0<\gamma<1$ in $C(\Omega)$ 
is a consequence of the Stone Weierstrass Theorem. 
Indeed, $C^{\gamma}(\Omega)$ is an algebra of functions 
containing all the constant functions and if $x\neq y \in \Omega,$ 
then the function $f$ given by $f(y)=d_{\Omega}(y,x)^{\gamma}$, 
separates $x$ and $y$ and $f\in C^{\gamma}(\Omega)$. 
Since $\Omega$ is compact the result follows.
 
\end{obs}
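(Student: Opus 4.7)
The plan is to verify the remark in two logically independent steps: first, establish that $C^{\gamma}(\Omega)$ is $\|\cdot\|_0$-dense in $C(\Omega)$ via the Stone--Weierstrass theorem, and then use this density together with a uniform bound on the normalized iterates $\lambda_f^{-n}\mathscr{L}_f^n$ to upgrade the convergence in item $(ii)$ from H\"older test functions $\varphi\in C^{\gamma}(\Omega)$ to arbitrary $\varphi\in C(\Omega)$.

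For the density, I would verify the Stone--Weierstrass hypotheses for the subspace $C^{\gamma}(\Omega)\subset C(\Omega)$ over the compact space $\Omega$. Closure of $C^{\gamma}(\Omega)$ under pointwise addition and scalar multiplication is immediate, and for products I would use the identity
\[
|f(x)g(x)-f(y)g(y)|\le \|f\|_0|g(x)-g(y)|+\|g\|_0|f(x)-f(y)|,
\]
which yields $\mathrm{Hol}(fg)\le \|f\|_0\mathrm{Hol}(g)+\|g\|_0\mathrm{Hol}(f)<\infty$; hence $C^{\gamma}(\Omega)$ is a subalgebra. It contains the constants, and for any two points $x\neq y$ in $\Omega$ the function $z\mapsto d_{\Omega}(z,x)^{\gamma}$ separates them and belongs to $C^{\gamma}(\Omega)$, since $t\mapsto t^{\gamma}$ is $\gamma$-H\"older on the bounded interval $[0,\mathrm{diam}(\Omega)]$. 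Compactness of $\Omega$ then closes the Stone--Weierstrass argument.

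For the extension of item $(ii)$, I would fix $\varphi\in C(\Omega)$ and $\varepsilon>0$, pick $\psi\in C^{\gamma}(\Omega)$ with $\|\varphi-\psi\|_0<\varepsilon$, and apply the triangle inequality to write
\[
\left\|\lambda_f^{-n}\mathscr{L}_f^n\varphi - h_f\!\int_{\Omega}\varphi\,d\nu_f\right\|_0
\le
\|\lambda_f^{-n}\mathscr{L}_f^n(\varphi-\psi)\|_0
+ \left\|\lambda_f^{-n}\mathscr{L}_f^n\psi - h_f\!\int_{\Omega}\psi\,d\nu_f\right\|_0
+ \|h_f\|_0\,\varepsilon.
\]
The middle term tends to zero as $n\to\infty$ by the already-established H\"older case applied to $\psi$. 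The main obstacle is controlling the first term, for which I need a uniform sup-norm bound of the form $\|\lambda_f^{-n}\mathscr{L}_f^n\varphi\|_0\le C\|\varphi\|_0$ valid for all $\varphi\in C(\Omega)$ and all $n$. Positivity of $\mathscr{L}_f$ gives the pointwise estimate $|\mathscr{L}_f^n(\varphi-\psi)|\le \|\varphi-\psi\|_0\,\mathscr{L}_f^n\mathbf{1}$, so it suffices to bound $\lambda_f^{-n}\mathscr{L}_f^n\mathbf{1}$ in $\|\cdot\|_0$. But $\mathbf{1}\in C^{\gamma}(\Omega)$, and applying the already-proved H\"older version of $(ii)$ to $\varphi=\mathbf{1}$ yields $\lambda_f^{-n}\mathscr{L}_f^n\mathbf{1}\to h_f\,\nu_f(\Omega)=h_f$ in $\|\cdot\|_0$, so this sequence is uniformly bounded. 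The first term is therefore at most $C\varepsilon$, and letting $n\to\infty$ followed by $\varepsilon\to 0$ concludes the argument; the only non-routine ingredient is this positivity-based observation that the normalized iterates are uniformly bounded on $C(\Omega)$.
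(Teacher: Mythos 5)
Your proposal is correct and follows essentially the same route as the paper: Stone--Weierstrass applied to the subalgebra $C^{\gamma}(\Omega)$ (constants, separation by $z\mapsto d_{\Omega}(z,x)^{\gamma}$, compactness of $\Omega$) for the density, followed by the approximation/triangle-inequality argument that the paper calls a ``straightforward computation.'' Your positivity observation $|\mathscr{L}_f^n(\varphi-\psi)|\le \|\varphi-\psi\|_0\,\mathscr{L}_f^n\mathbf{1}$ together with the uniform convergence $\lambda_f^{-n}\mathscr{L}_f^n\mathbf{1}\to h_f$ is exactly the detail the paper leaves implicit, and it is correct.
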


Following \cite{le,LMMS} we define 
the entropy of a shift invariant measure $\nu\in \mathcal{M}_{\sigma}$ 
and the pressure of the potential $f$, respectively, as follows
\[
h(\nu)
=
\inf_{f \in C^{\gamma}(\Omega)}
\left\{-\int_{\Omega}f d\nu+ \log\lambda_{f}  \right\}
\qquad \text{and}\qquad
P(f)
=
\sup_{\nu\in \mathcal{M}_{\sigma}}
\left\{h(\nu)+\int_{\Omega}f\, d\nu \right \}.
\]

\begin{prop}\label{Principio variacional}
For each $f \in C^{\gamma}(\Omega)$ we have for all 
$x\in \Omega$ that 
\[
P(f)
=
\lim_{n\to\infty}
\frac{1}{n}
\log[ \mathscr{L}_{f}^n(1)(x)]
=
\log\lambda_{f}.
\]
\end{prop}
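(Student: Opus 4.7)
The plan is to prove the two equalities separately, starting with the easier one. The second equality $\lim_{n\to\infty}\tfrac{1}{n}\log \mathscr{L}_f^n(1)(x) = \log \lambda_f$ is an immediate consequence of Theorem~\ref{Ruelle-Perron}. Indeed, item~(ii) applied with $\varphi \equiv 1$ yields $\lambda_f^{-n}\mathscr{L}_f^n(1) \to h_f$ uniformly on $\Omega$, and since $h_f$ is strictly positive and continuous on the compact space $\Omega$ it is bounded between two positive constants. Writing
\[
\frac{1}{n}\log \mathscr{L}_f^n(1)(x) = \log \lambda_f + \frac{1}{n}\log\bigl(\lambda_f^{-n}\mathscr{L}_f^n(1)(x)\bigr),
\]
the error term vanishes uniformly in $x$ as $n \to \infty$.

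For the variational identity $P(f) = \log \lambda_f$, the upper bound is immediate from the definitions: specializing to $g = f$ in the infimum defining $h(\nu)$ gives $h(\nu) \leq -\int f\, d\nu + \log \lambda_f$ for every $\sigma$-invariant $\nu$, so $h(\nu) + \int f\, d\nu \leq \log \lambda_f$, and taking the supremum over $\nu$ yields $P(f) \leq \log \lambda_f$. For the reverse inequality I would exhibit the equilibrium state $\mu_f := h_f \, \nu_f$ as a maximizer. Two preliminary facts are needed: first, $\mu_f$ is a probability measure, which follows from integrating item~(ii) (with $\varphi \equiv 1$) against $\nu_f$ and using $\mathscr{L}_f^{*}\nu_f = \lambda_f \nu_f$ to deduce $\int h_f\, d\nu_f = 1$; second, $\mu_f$ is $\sigma$-invariant, which follows from the standard identity $\mathscr{L}_f(\varphi\circ\sigma \cdot \psi) = \varphi \cdot \mathscr{L}_f \psi$ combined with both eigenvalue equations. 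Granted these, it remains to verify the tangent-line inequality
\[
\int (g - f)\, d\mu_f \;\leq\; \log \lambda_g - \log \lambda_f \qquad \text{for all } g \in C^{\gamma}(\Omega),
\]
since rearranging and infimizing over $g$ yields $h(\mu_f) \geq \log \lambda_f - \int f\, d\mu_f$, hence $P(f) \geq \log \lambda_f$.

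The main obstacle is this display, which asserts that $\mu_f$ is a subgradient of the convex functional $g \mapsto \log \lambda_g$ at $f$. I would handle it in two steps. Convexity of $g \mapsto \log \lambda_g$ follows from H\"older's inequality applied to the $n$-fold integral representation of $\mathscr{L}_g^n(1)(x)$ as an expectation of $\exp$ of an $n$-step Birkhoff sum, combined with the limit formula for $\log \lambda_g$ just established and the continuity of $\log$. The identification of $\mu_f$ as the subgradient reduces to differentiating the one-parameter family $t \mapsto \log \lambda_{f+tg}$ at $t=0$: from the analytic perturbation framework developed in Section~2, the eigenvalue $\lambda_f$ is simple and depends analytically on the potential, and standard first-order perturbation (differentiating $\mathscr{L}_{f+tg} h_{f+tg} = \lambda_{f+tg} h_{f+tg}$ and pairing with the dual eigenvector $\nu_f$) gives
\[
\partial_t \lambda_{f+tg}\big|_{t=0} = \lambda_f \int g\, d\mu_f,
\qquad\text{hence}\qquad
\partial_t \log \lambda_{f+tg}\big|_{t=0} = \int g\, d\mu_f.
\]
A convex function lies above its tangent line, which is exactly the required inequality; this closes the argument.
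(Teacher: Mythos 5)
Your proof is correct, but it is worth pointing out up front that the paper does not give its own proof of this proposition --- it simply cites Corollary 1 of \cite{LMMS}. So there is no internal argument in the paper to compare against; what you have written is a self-contained substitute for the delegated proof.

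Your overall architecture is the standard one for this kind of statement: the limit formula falls out of Theorem~\ref{Ruelle-Perron}~(ii); the upper bound $P(f)\le\log\lambda_f$ is immediate by taking $g=f$ in the infimum defining $h(\nu)$; and the lower bound is obtained by exhibiting $\mu_f = h_f\,\nu_f$ as a shift-invariant probability and verifying the subgradient (tangent-line) inequality $\int(g-f)\,d\mu_f\le\log\lambda_g-\log\lambda_f$ via convexity of $g\mapsto\log\lambda_g$ (H\"older on the $n$-fold integral plus the limit formula) together with first-order eigenvalue perturbation at $f$. Each of these steps is verified correctly: the invariance of $\mu_f$ follows from $\mathscr{L}_f(\varphi\circ\sigma\cdot\psi)=\varphi\,\mathscr{L}_f\psi$ and both eigenvalue equations, the normalization $\int h_f\,d\nu_f=1$ follows from item~(ii) and $\mathscr{L}_f^{*}\nu_f=\lambda_f\nu_f$, and the computation $\partial_t\lambda_{f+tg}|_{t=0}=\lambda_f\int g\,d\mu_f$ is correct (using $\nu_f(\mathscr{L}_f\psi)=\lambda_f\nu_f(\psi)$ to cancel the $h_f'$ terms).

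One small caveat on provenance: you attribute the needed eigenvalue-differentiability to ``the analytic perturbation framework developed in Section~2,'' but Section~2 only gives analyticity of $f\mapsto\mathscr{L}_f$ (Lemma~\ref{Lema principal} / Proposition~\ref{An.Op.Ru}). Passing from analyticity of the operator to analyticity (or even mere Gateaux-differentiability) of the leading simple isolated eigenvalue is a separate step, which in this paper is done only in Section~5 via Lemma~\ref{Lema tecnico}. That lemma is proved independently of the present proposition, so there is no circularity, but your proof does rely on a forward reference (or, equivalently, on the classical Kato perturbation theory for a simple isolated eigenvalue, which Theorem~\ref{Ruelle-Perron}~(i) makes available). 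Stating this dependence explicitly would make the logical structure cleaner.
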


\begin{proof}
See \cite{LMMS} Corollary 1.
\end{proof}

Another important property of the Ruelle operator is its 
analytic dependence (in the Fr\'echet's sense) 
with respect to the potential. 
The lemma below state it precisely.

\begin{lema}\label{Lema principal}
The map
$
\Theta :C^{\gamma}(\Omega)
\rightarrow 
L(C^{\gamma}(\Omega),C^{\gamma}(\Omega))
$ 
sending $f \in C^{\gamma}(\Omega)$ to the
Ruelle operator ${\mathscr{L}}_{f}$ associated to 
the potential $f$, is an analytic map.
\end{lema}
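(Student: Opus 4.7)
The natural strategy is to expand around an arbitrary point $f_0 \in C^{\gamma}(\Omega)$ and exploit the fact that the only place $f$ enters the definition of $\mathscr{L}_f$ is through the factor $e^{f(ax)}$, which is itself a convergent power series in $f$. Writing $f = f_0 + g$ with $g \in C^{\gamma}(\Omega)$, I would use $e^{f(ax)} = e^{f_0(ax)}\sum_{n=0}^{\infty}\frac{g(ax)^n}{n!}$ and, after justifying the interchange with the integral over $M$, obtain the formal expansion
\[
\mathscr{L}_{f_0+g}(\varphi)(x)
\;=\;
\sum_{n=0}^{\infty}\frac{1}{n!}\,\mathscr{L}_{f_0}\!\left(g^{n}\varphi\right)(x).
\]
This suggests introducing, for each $n\geq 0$, the continuous $n$-homogeneous polynomial
$P_n : C^{\gamma}(\Omega) \to L(C^{\gamma}(\Omega),C^{\gamma}(\Omega))$ given by $P_n(g)(\varphi) = \frac{1}{n!}\mathscr{L}_{f_0}(g^{n}\varphi)$, associated with the symmetric $n$-linear form $(g_1,\dots,g_n)\mapsto \frac{1}{n!}\mathscr{L}_{f_0}(g_1\cdots g_n\,\varphi)$.

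To see that each $P_n$ is well-defined and bounded, the key input is that $(C^{\gamma}(\Omega),\|\cdot\|_{\gamma})$ is a Banach algebra: a short direct computation using $|u(x)v(x)-u(y)v(y)|\leq \|u\|_0|v(x)-v(y)|+\|v\|_0|u(x)-u(y)|$ yields the submultiplicativity $\|uv\|_{\gamma}\leq \|u\|_{\gamma}\|v\|_{\gamma}$. Combined with the fact (used implicitly in Theorem~\ref{Ruelle-Perron}) that $\mathscr{L}_{f_0}$ is a bounded operator on $C^{\gamma}(\Omega)$, this gives
\[
\|P_n(g)(\varphi)\|_{\gamma}
\;\leq\;
\frac{1}{n!}\|\mathscr{L}_{f_0}\|_{\mathrm{op}}\,\|g\|_{\gamma}^{n}\,\|\varphi\|_{\gamma},
\]
so $\|P_n\| \leq \frac{1}{n!}\|\mathscr{L}_{f_0}\|_{\mathrm{op}}$. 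The radius of convergence of $\sum_{n\geq 0} P_n(g)$ is therefore infinite, which precisely expresses Fréchet analyticity of $\Theta$ at $f_0$; since $f_0$ was arbitrary, $\Theta$ is entire.

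The only genuinely non-routine step is justifying rigorously the two points where analysis enters: first, the interchange of $\sum_n$ and $\int_M d\mu(a)$, which needs a uniform bound of the form $|e^{f_0(ax)}g(ax)^n\varphi(ax)|/n! \leq e^{\|f_0\|_0}\|g\|_0^n\|\varphi\|_0/n!$ and the monotone/dominated convergence theorem; and second, showing that the partial sums $S_N(g) = \sum_{n=0}^{N} P_n(g)$ converge in the operator norm on $L(C^{\gamma}(\Omega),C^{\gamma}(\Omega))$ to the operator $\mathscr{L}_{f_0+g}$, which follows from the tail estimate $\|S_N(g)-\mathscr{L}_{f_0+g}\|_{\mathrm{op}}\leq \|\mathscr{L}_{f_0}\|_{\mathrm{op}}\sum_{n>N}\|g\|_{\gamma}^n/n!\to 0$. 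The main obstacle is really the bookkeeping around the Banach-algebra estimate, since without submultiplicativity of $\|\cdot\|_{\gamma}$ one cannot control $\|g^n\varphi\|_{\gamma}$ by $\|g\|_{\gamma}^n\|\varphi\|_{\gamma}$ and the whole argument collapses; everything else is a straightforward application of the standard criterion for analyticity in terms of a convergent power series of homogeneous polynomials.
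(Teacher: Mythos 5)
Your proposal is correct and follows essentially the same route as the proof the paper gives for the more general Proposition~\ref{An.Op.Ru} (of which Lemma~\ref{Lema principal} is the special case $\mathcal{K}=C^{\gamma}(\Omega)$, delegated in the text to \cite{ERR}, Theorem~3.5): expand $e^{f_0+g}=e^{f_0}e^{g}$, integrate term by term to obtain $\mathscr{L}_{f_0+g}\varphi=\sum_{n\geq 0}\tfrac{1}{n!}\mathscr{L}_{f_0}(g^n\varphi)$, and then control the series using that $(C^\gamma(\Omega),\|\cdot\|_\gamma)$ is a Banach algebra and $\mathscr{L}_{f_0}$ is bounded on it. The only difference is in the finishing step: the paper explicitly computes $D^k\Theta(f)(h_1,\dots,h_k)=\Theta(f)\bigl((\cdot)h_1\cdots h_k\bigr)$ by induction on $k$ with explicit remainder estimates, whereas you invoke the abstract characterization of Fr\'echet analyticity in terms of a locally uniformly convergent series of bounded homogeneous polynomials; the two endgames are equivalent and rest on the same inequality $\|\mathscr{L}_{f_0}(g^n\varphi)\|_\gamma\leq \|\mathscr{L}_{f_0}\|_{\mathrm{op}}\|g\|_\gamma^n\|\varphi\|_\gamma$.
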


\begin{proof}
See \cite{ERR}  Theorem 3.5.
\end{proof}

One of the aims of this work is to extend the previous mentioned results to potentials 
on the {\bf Walters space} $W(\Omega)$ (to be defined in the next section), 
where $\Omega$ is the infinite Cartesian product of 
a general compact metric space $(M,d)$.

We shall prove  that the Ruelle operator
and its dual depends analytically on the potential 
in $C^{\gamma}(\Omega)$ and then derive the analyticity of the pressure. 
In order to formulate these results using an unified setting, 
we need to introduce some additional notation. 
Let $\mathcal{K}\subset C(\Omega)$ be an arbitrary linear subspace 
of $C(\Omega)$, endowed with a norm $\|\cdot\|$. 
We use the notation $\mathcal{K}^{*}$ to denote 
the topological dual of $(\mathcal{K},\|\cdot\|)$.
As usual, we define the norm 
of an element $\phi \in \mathcal{K}^{*}$ by
$
\|\phi\|_{*}=\sup\{|\phi(f)|: f\in \mathcal{K}\ \text{and}\  \|f\|=1\}.
$
To lighten the notation, the space $L(\mathcal{K},\mathcal{K})$ 
of the all continuous (strong topology) linear 
operators acting on $\mathcal{K}$ will be denote 
by $V\equiv L(\mathcal{K},\mathcal{K})$.

\begin{defnc}
Let $\mathcal{K}\subset C(\Omega)$ be a linear subspace. 
We say that $\mathcal{K}$ is invariant  for the Ruelle operator, 
if for all $f\in \mathcal{K}$ 
we have $\mathscr{L}_f\mathcal{K}\subset \mathcal{K}.$
\end{defnc}
The central examples of invariant subspaces 
for the  Ruelle operator appearing 
here are the spaces $C^{\gamma}(\Omega)$, 
$0<\gamma\leq 1$, 
and the {\it Walters Space} $W(\Omega)$.

The next proposition plays a key role in the study of 
the analyticity of the pressure. 
Its first statement is a simple generalization 
of the Theorem $3.5$ in \cite{ERR}, 
which is presented here for the reader's convenience.
The second one follows from the first after some work.

\begin{prop}\label{An.Op.Ru}
Suppose that $\mathcal{K}\subset C(\Omega)$ is equipped 
with a norm $\|\cdot \|$ so that $(K, \|\cdot\|)$ 
is a Banach Algebra, $\mathcal{K}$ is invariant for the Ruelle operator
and for any $f\in\mathcal{K}$ assume that $\mathscr{L}_f \in V$. 
Then both mappings $\Theta$ and $\Theta^{*}$ given by 
$$
\mathcal{K}\ni f\mapsto \mathscr{L}_f\in L(\mathcal{K},\mathcal{K})
\qquad\text{and}\qquad
\mathcal{K}\ni f\mapsto \mathscr{L}^{*}_f\in L(\mathcal{K}^{*}, \mathcal{K}^{*})
$$ 
define analytic functions.

\end{prop}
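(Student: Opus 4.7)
The plan is to exhibit an explicit power series representation for $\mathscr{L}_{f}$ around any fixed $f_0\in\mathcal{K}$ and verify that it converges in the operator norm of $V=L(\mathcal{K},\mathcal{K})$. The whole argument rests on the following algebraic identity, which is immediate from the definition of the Ruelle operator: for any $f,g\in\mathcal{K}$ and any $\varphi\in\mathcal{K}$,
\[
\mathscr{L}_{f_0+g}(\varphi)(x)
=\int_{M} e^{f_0(ax)}\,e^{g(ax)}\,\varphi(ax)\,d\mu(a)
=\mathscr{L}_{f_0}\!\bigl(e^{g}\varphi\bigr)(x).
\]
Hence, once we can expand $e^{g}$ as an absolutely convergent power series in $(\mathcal{K},\|\cdot\|)$, composing with the fixed bounded operator $\mathscr{L}_{f_0}\in V$ and with multiplication by $\varphi$ will give the desired expansion of $\Theta(f_0+g)$.

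The first step is to use the Banach algebra hypothesis to prove that for every $g\in\mathcal{K}$ the exponential series
\[
e^{g}=\sum_{n=0}^{\infty}\frac{g^{n}}{n!}
\]
converges in $(\mathcal{K},\|\cdot\|)$, using $\|g^{n}\|\le\|g\|^{n}$ (up to an absorbed constant) and the convergence of $\sum \|g\|^{n}/n!$. Combining this with the previous identity yields
\[
\mathscr{L}_{f_0+g}(\varphi)
=\sum_{n=0}^{\infty}\frac{1}{n!}\,\mathscr{L}_{f_0}\bigl(g^{n}\varphi\bigr),
\]
which suggests defining, for each $n\ge 0$, the symmetric $n$-linear form $T_{n}\colon \mathcal{K}^{n}\to V$ by
\[
T_{n}(g_{1},\dots,g_{n})(\varphi)
=\mathscr{L}_{f_0}\bigl(g_{1}\cdots g_{n}\,\varphi\bigr).
\]
The second step is to show that each $T_{n}$ is bounded with $\|T_{n}\|\le \|\mathscr{L}_{f_0}\|_{V}$ (again using the Banach algebra inequality) and that the formal Taylor-type series $\sum_{n\ge 0} T_{n}(g,\dots,g)/n!$ converges in $V$ for every $g$, with infinite radius of convergence. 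Matching this convergent series with the identity above gives a bona fide analytic representation of $\Theta$ near $f_0$, and since $f_0$ was arbitrary, $\Theta$ is analytic on all of $\mathcal{K}$.

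For the second assertion about $\Theta^{*}$, I would simply invoke the fact that the adjoint map $J\colon V\to L(\mathcal{K}^{*},\mathcal{K}^{*})$, $T\mapsto T^{*}$, is a bounded linear map (indeed an isometry). Since $\Theta^{*}=J\circ\Theta$ and the composition of an analytic map with a continuous linear map is analytic, the analyticity of $\Theta^{*}$ is a direct consequence of the first part.

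The main obstacle I anticipate is purely notational: verifying carefully that the Banach algebra norm is truly sub-multiplicative (or only sub-multiplicative up to a fixed constant, in which case the constant enters the bound on $\|T_{n}\|$ as $C^{n}$, still giving an everywhere convergent series). Apart from that, all the work is in organizing the above identity, bounding the multilinear maps, and citing standard facts relating absolute convergence of a formal symmetric power series to analyticity in the Fr\'echet sense; no deep spectral input about $\mathscr{L}_{f_0}$ is required beyond its continuity on $\mathcal{K}$.
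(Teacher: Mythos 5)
Your treatment of $\Theta$ is essentially the same as the paper's: you derive the identity
\[
\mathscr{L}_{f_0+g}(\varphi)=\sum_{n\ge 0}\frac{1}{n!}\,\mathscr{L}_{f_0}(g^{n}\varphi),
\]
bound the candidate $n$-linear terms $\|\mathscr{L}_{f_0}((\cdot)\,g_1\cdots g_n)\|_V\le\|\mathscr{L}_{f_0}\|_V\|g_1\|\cdots\|g_n\|$ via the Banach algebra inequality, and match the series to a Taylor expansion; this is exactly the paper's argument (the paper additionally spells out the inductive verification that $D^k\Theta(f)(h_1,\ldots,h_k)=\Theta(f)((\cdot)h_1\cdots h_k)$, which you delegate to a ``standard fact'' about symmetric power series, a reasonable shortcut at sketch level). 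Where you genuinely diverge is in the second assertion: the paper repeats the entire power-series-and-remainder estimate from scratch for $\Theta^*$, while you observe that the adjoint map $J\colon V\to L(\mathcal{K}^{*},\mathcal{K}^{*})$, $T\mapsto T^{*}$, is a bounded (isometric) linear map, so $\Theta^{*}=J\circ\Theta$ is automatically analytic as a composition of an analytic map with a continuous linear one. This is correct, shorter, and arguably cleaner than the paper's direct computation; it buys you the second claim for free once the first is established, at the cost of invoking the (standard, but not spelled out) facts that bounded linear maps are analytic and that analyticity is preserved under post-composition with such maps.
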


Before proving the above proposition
we state an immediate corollary of it 
which is an important tool to obtain the 
analyticity of the pressure functional.

\begin{corollary}
For each $0<\gamma\leq 1$, both mappings  
$$
C^{\gamma}(\Omega)\ni 
f\mapsto 
\mathscr{L}_f\in 
L(C^{\gamma}(\Omega),C^{\gamma}(\Omega))
\qquad\text{and}\qquad
C^{\gamma}(\Omega)\ni 
f\mapsto 
\mathscr{L}^{*}_f\in 
L(C^{\gamma}(\Omega)^{*}, C^{\gamma}(\Omega)^{*})
$$ 
define analytic maps.
\end{corollary}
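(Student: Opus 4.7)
The plan is to deduce this corollary as a direct application of Proposition \ref{An.Op.Ru} with the choice $\mathcal{K}=C^{\gamma}(\Omega)$ equipped with its standard norm $\|\cdot\|_{\gamma}=\|\cdot\|_0+\mathrm{Hol}(\cdot)$. Once the hypotheses of the proposition are verified for this particular $\mathcal{K}$, the conclusion on analyticity of $f\mapsto\mathscr{L}_f$ and $f\mapsto\mathscr{L}_f^{*}$ follows automatically.

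So the work reduces to checking three items. First, that $(C^{\gamma}(\Omega),\|\cdot\|_{\gamma})$ is a Banach algebra: completeness is already recalled in Section 2, and for the product inequality one estimates, for $f,g\in C^{\gamma}(\Omega)$, both $\|fg\|_0\leq \|f\|_0\|g\|_0$ and, using the identity $f(x)g(x)-f(y)g(y)=(f(x)-f(y))g(x)+f(y)(g(x)-g(y))$, the H\"older seminorm bound $\mathrm{Hol}(fg)\leq \|g\|_0\,\mathrm{Hol}(f)+\|f\|_0\,\mathrm{Hol}(g)$. Adding these yields $\|fg\|_{\gamma}\leq \|f\|_{\gamma}\|g\|_{\gamma}$, so $C^{\gamma}(\Omega)$ is indeed a Banach algebra. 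Second, invariance of $C^{\gamma}(\Omega)$ under the Ruelle operator, i.e., $\mathscr{L}_f \mathcal{K}\subset \mathcal{K}$ for every $f\in\mathcal{K}$: this is exactly the content of the statement $\mathscr{L}_f:C^{\gamma}(\Omega)\to C^{\gamma}(\Omega)$ already built into the formulation of Theorem \ref{Ruelle-Perron}. Third, that $\mathscr{L}_f\in V=L(C^{\gamma}(\Omega),C^{\gamma}(\Omega))$, i.e., that $\mathscr{L}_f$ is bounded on the H\"older norm: this is again implicit in Theorem \ref{Ruelle-Perron}, where $\mathscr{L}_f$ is treated as a bounded operator on $C^{\gamma}(\Omega)$ with spectral radius $\lambda_f$.

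With these three ingredients in hand, Proposition \ref{An.Op.Ru} applies verbatim and gives the analyticity of both $\Theta:f\mapsto \mathscr{L}_f$ and $\Theta^{*}:f\mapsto \mathscr{L}_f^{*}$ as maps from $C^{\gamma}(\Omega)$ into $L(C^{\gamma}(\Omega),C^{\gamma}(\Omega))$ and $L(C^{\gamma}(\Omega)^{*},C^{\gamma}(\Omega)^{*})$, respectively. The main (and really the only) conceptual point is that the H\"older norm is submultiplicative, which is needed so the series expansion $\mathscr{L}_{f+g}=\mathscr{L}_f(e^g\cdot)=\sum_{n\geq 0}\mathscr{L}_f(g^n\,\cdot)/n!$ underlying Proposition \ref{An.Op.Ru} converges in operator norm on $\mathcal{K}$; no further work is required beyond pointing at the previously established proposition.

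The only step that is not immediate from cited results is the Banach algebra verification, and that is a short elementary computation; the remaining claims are essentially restatements of Theorem \ref{Ruelle-Perron}. Hence there is no substantive obstacle here, and the corollary should read as a one-line application of Proposition \ref{An.Op.Ru}.
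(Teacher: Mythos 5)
Your proposal is correct and follows the same route as the paper: both reduce the corollary to Proposition \ref{An.Op.Ru} with $\mathcal{K}=C^{\gamma}(\Omega)$, the only substantive hypothesis being that $(C^{\gamma}(\Omega),\|\cdot\|_{\gamma})$ is a Banach algebra, which you verify by the standard split of $\mathrm{Hol}(fg)$ while the paper simply cites \cite{ERR}. The remaining two hypotheses (invariance and boundedness of $\mathscr{L}_f$) are, as you note, already packaged into Theorem \ref{Ruelle-Perron}, so there is nothing more to check.
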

\begin{proof}
Notice that the subspace $C^{\gamma}(\Omega)$ 
is invariant for the Ruelle operator and 
$(C^{\gamma}(\Omega),  \|\cdot\|_{\gamma})$
is a Banach Algebra (see \cite{ERR}), so we are done.
\end{proof}

\noindent {\bf Proof of the Proposition \ref{An.Op.Ru}.}
\\
We first prove the \noindent analyticity of $\Theta$.  
Given $f,h\in \mathcal{K}$ and $\varphi\in C(\Omega)$ 
we have for any $x\in \Omega$ the following equality
\begin{align*}
\Theta (f + h)(\varphi)(x) - \Theta (f)(\varphi)(x)
=& \;\; {\mathscr{L}}_{f +h}(\varphi)(x) -{\mathscr{L}}_{f}(\varphi)(x)  
\\[0.3cm]
=& 
\int \limits_{M} e^{f (ax) +h (ax)} \varphi (ax) d\mu (a)
- 
\int \limits_{M} e^{f (ax) } \varphi (ax) d\mu (a) \\
=& 
\int \limits_M e^{f (ax) } \varphi (ax) \big( e^{h (ax)} -1 \big)d\mu (a) \\
=& 
\int \limits_M \biggl(e^{f (ax) } 
\varphi (ax) \sum \limits_{n=1} \limits^{\infty} 
\frac{[h (ax)]^n}{n!} \biggr)d\mu (a)\,.
\end{align*}
As long as the Fubini Theorem applies we get 
$$
\Theta (f + h)(\varphi)(x) - \Theta (f)(h)(x) 
=
\sum \limits_{n=1} 
\limits^{\infty}\frac{1}{n!}
\int \limits_M e^{f (ax)}\varphi (ax)[h (ax)]^n d\mu (a),
$$
which is equivalent to 
$
\Theta (f + h)(\varphi)(x) - \Theta (f)(\varphi)(x) 
= 
\sum_{n=1}^{\infty}\frac{1}{n!}
\Theta (f)(\varphi\cdot h^n)(x).
$
This equality can be rewritten, by omitting the dependence on $x$ and $\varphi$,
simply as follows
\begin{equation}
\label{representacaoemseriesdepotencias}
\Theta (f + h) - \Theta (f) = \sum \limits_{n=1} 
\limits^{\infty}\frac{1}{n!}
\Theta (f)((\cdot) h^n)\,.
\end{equation}
To justify the applicability of the Fubini Theorem in 
this case is sufficient to prove that the above sum converges
in $(\mathcal{K},  \|\cdot\|)$. 
We first observe that for any $h _1 ,\ldots ,h _k$ in $\mathcal{K}$, 
the mapping
$
\varphi \mapsto \Theta (f )(\varphi h _1 \ldots h_k )
$
from $\mathcal{K}$ to itself defines 
a continuous linear operator, i.e.,
$\Theta (f )((\cdot) h _1 \ldots h_k ) \in  V$
whose its norm is bounded by
\begin{eqnarray}
\label{dominando}
\| \Theta (f )((\cdot) h _1 \ldots h_k ) \|_{V} 
\leq 
\| {\mathscr L}_{f} \|_{V}  \|h _1\|\ldots \|h_k \|.
\end{eqnarray}
which is proved by a routine computation using that $\mathscr{L}_{f}\in V$ and 
$(\mathcal{K}, \|\cdot \|)$ is a Banach algebra.
%
As a consequence of this inequality we get 
$$
\sum \limits_{n=1} 
\limits^{\infty}\frac{1}{n!} 
\| \Theta (f)((\cdot) [h]^n) \|_{V}
\leq 
\sum \limits_{n=1} \limits^{\infty}
\frac{1}{n!}
\|\mathscr L _{f }\|_{V} (\| h \|)^n 
= 
\| \mathscr L _{f }\|_{V} 
\left\{ e^{ \| h \|} -1 \right\}
$$
which immediately implies that the series 
$
\sum_{n=1}^{\infty}(1/n!)
\Theta (f)((\cdot) [h]^n)
$ 
converges in $ V$.

\noindent {\bf Claim 1.} 
For any $k\in\mathbb{N}$ and 
$h _1 ,\ldots, h_k \in \mathcal{K}$, 
we have that 
\[
D^k \Theta (f )(h _1, \ldots , h _k)
= 
\Theta (f ) ((\cdot)h _1 ,\ldots , h _k).
\]
The verification will be carried out by induction on $k$.
In what follows  $L^k=L^k(\mathcal{K}, V)$ 
denotes the set of all continuous $k-$linear  
functions 
$l:\mathcal{K} \times \ldots \times \mathcal{K}\rightarrow  V$, 
from $\mathcal{K} \times \ldots \times \mathcal{K}$ 
($k-$ copies of   $\mathcal{K} $) into $ V$. 
The norm $\|\cdot\| _{L^k}$ of $L^k$ is given by
$$
\| l \| _{L^k}
=
\sup \limits_{\stackrel{\|h _i\|=1}{i=1,\ldots ,k}}
\| l(h_1,\ldots,h_k)\| _{V}
\;,\;\; l\in L^k\,.
$$
Let us prove that the statement is true for $k=1$: in fact, 
by using (\ref{representacaoemseriesdepotencias}) we have,
$$
\Theta (f + h _1) - \Theta (f ) 
= 
\Theta (f )( (\cdot)h _1) + \mathcal{O}_1(h _1)
$$
where, 
$
\mathcal{O}_1(h_1 )
=
\sum_{n=2}^{\infty} (1/n!)\Theta (f )((\cdot)[h_1]^n)$. 
The inequality~(\ref{dominando}) implies that
$
\| \Theta (f )((\cdot)h _1) \|_V \leq \| \mathscr L _{f} \|_{V} \| h_1 \|
$
and thus the mapping 
$h _1 \mapsto \Theta (f )((\cdot)h _1)$ is in $L^1$. 
Again, in view of the inequality~(\ref{dominando}) we have
\begin{align*}
\| \mathcal{O}_1 (h _1) \| _{V}
=  
\| \sum \limits_{n=2}\limits^{\infty} 
	\frac{1}{n!}\Theta (f )((\cdot)[h_1]^n) 
\| _{V}  
\leq  
\sum \limits_{n=2}\limits^{\infty} 
\frac{1}{n!}
\| \mathscr L _{f} \| _{V} 
(\| h \| )^n  
\end{align*}
showing that  
$ (1/\| h _1 \|) \| \mathcal{O}_1 (h _1)\| _{V}\rightarrow 0$, 
when $\| h _1 \|  \rightarrow 0$. 
Therefore, 
$D^1 \Theta (f)(h _1)=\Theta (f )((\cdot)h _1)$
and the statement is true for $k=1$.
Now, let us suppose the statement is true for $k-1,\;k\geq 2$, i.e.,
\begin{eqnarray}
\label{k-1-derivada}
D^{k-1} \Theta (f )(h _1, \ldots , h _{k-1})
= 
\Theta (f ) ((\cdot)h _1 \ldots h _{k-1})
\;,\;\; h _1 ,\ldots, h_{k-1}\in \mathcal{K}\,.
\end{eqnarray}
We shall verify that the statement is true for $k$, i.e.
\begin{eqnarray}
\label{kderivada}
D^k \Theta (f )(h _1, \ldots , h _k)
= 
\Theta (f ) ((\cdot)h _1 .\ldots . h _k)
\;,\;\; h _1 ,\ldots, h_k \in \mathcal{K}\,.
\end{eqnarray}
By the induction hypothesis \eqref{k-1-derivada}, 
given $h _1, \ldots h _{k-1}, h _k$ and $h $ in $\mathcal{K}$ we have,
\begin{multline*}
D^{k-1} \Theta (f +h _k)(h _1, \ldots , h _{k-1})(h)
-
D^{k-1} \Theta (f )(h _1, \ldots , h _{k-1})(h )
\\
=
\Theta (f +h _k) (h h _1 \ldots  h _{k-1})-\Theta (f ) (h h _1 \ldots h _{k-1}).
\end{multline*}
From (\ref{representacaoemseriesdepotencias}) follows that
\begin{multline*}
D^{k-1} \Theta (f +h _k)(h _1, \ldots , h _{k-1})(h)
-
D^{k-1} \Theta (f )(h _1, \ldots , h _{k-1})(h )
\\
=
\sum \limits_{n=1} \limits^{\infty}
\frac{1}{n!}\Theta (f)(h h _1 \ldots h _{k-1} [h _k]^n).
\end{multline*}
Clearly, the above equation shows that
\begin{multline*}
D^{k-1} \Theta (f +h _k)(h _1, \ldots , h _{k-1})
-
D^{k-1} \Theta (f )(h _1, \ldots , h _{k-1})
\\
= 
\Theta (f )(\cdot)h _1 \ldots h _{k-1}h _k)
+
\mathcal{O}_k(h _k)(h _1 \ldots h _{k-1}),
\end{multline*}
where $\mathcal{O}_k(h _k)$ is the element of $L^{k-1}$ given by
$$
\mathcal{O}_k(h _k)(h _1 \ldots h _{k-1})
=
\sum \limits_{n=2} \limits^{\infty}
\frac{1}{n!}\Theta (f)((\cdot) h _1 \ldots h _{k-1} [h _k]^n).
$$
These upper bounds together with the inequality~(\ref{dominando})
enable us to conclude that the map 
$( h _1 \ldots h _k )\mapsto  \Theta (f )((\cdot) h _1 \ldots h _k )$ 
is an element of $L^k$.
The inequality~(\ref{dominando}) and the definition of $ \mathcal{O}_k(h _k)$ 
give us the upper bound 
\[
\| \mathcal{O}_k(h _k)(h _1 \ldots h _{k-1}) \| _{V} 
\leq 
\sum \limits_{n=2}\limits^{\infty} 
\frac{1}{n!}
\| \mathscr L _{f}\| _{V}  \| h _1 \| \ldots \| h _{k-1} \|(\| h _k\|)^n
\]
and consequently 
$(1/\| h _k \|)\| \mathcal{O}_k (h _k)\| _{V} \to 0$, 
when $\| h _k \|  \to 0$. 
Therefore, 
$D^k \Theta (f)(h _1, \ldots h _k)=\Theta (f )((\cdot)h _1 \ldots h _k)$
and the claim is proved.

By using the Claim 1 and the above estimates for the
remaining the analyticity of the mapping 
$\mathcal{K}\ni f\mapsto \mathscr{L}_f\in V$
follows.

\noindent {\bf Analitycity of } $\Theta^{*}.$ 
Let $f,g$ and $h$ be potentials in $\mathcal{K}$ 
and $\phi^{*}\in  \mathcal{K}^{*}$.
From the expansion \eqref{representacaoemseriesdepotencias} 
for $\Theta(f+h)$ we get 
\begin{eqnarray}\label{analit. adjunto 1}
\Theta^{*}(f+h)(\phi^{*})g
=
\phi^{*}(\Theta(f+h)(g))
&=&
\phi^{*}\left(\sum_{n=0}^{\infty}\dfrac{1}{n!}\Theta(f)(g [h]^n)(\cdot)\right)
\nonumber
\\
&=&
\sum_{n=0}^{\infty}\dfrac{1}{n!}\phi^{*}(\Theta(f)(g [h]^n)(\cdot))
\nonumber
\\
&=&
\sum_{n=0}^{\infty}\dfrac{1}{n!}(\Theta^{*}(f)\phi^{*})(g [h]^n)(\cdot).
\end{eqnarray}

\noindent {\bf Claim 2.} Consider  the derivative map 
$D\Theta^{*}:\mathcal{K}\to L(\mathcal{K},L(\mathcal{K}^*,\mathcal{K}^*)),$ 
then for any $f\in \mathcal{K}$ and $h\in \mathcal{K}$ we have that  
$D\Theta^{*}(f)(h):  \mathcal{K}^{*}\to  \mathcal{K}^{*}$ 
is given by
$
(D\Theta^{*}(f)(h)(\phi^{*}))g=(\Theta^{*}(f)(\phi^{*}))(gh).
$
Indeed, consider 
$\mathcal{O}:\mathcal{K}\to L(\mathcal{K}^*,\mathcal{K}^*) $ 
defined by
$
\mathcal{O}(h)(\phi^{*})
= 
\sum_{n=2}^{\infty}(1/n!)\Theta^{*}(\phi^{*})((\cdot)[h]^n)
$
then  we have
\begin{multline}
\label{analit. adjunto 2}
\|\mathcal{O}(h)\|_{L(\mathcal{K}^*,\mathcal{K}^*)}
=
\sup_{\|\phi^{*}\|_{*}=1}
\|\mathcal{O}(h)(\phi^{*})\|_{*}
=
\sup_{\|\phi^{*}\|_{*}=1}
\left\|
\sum_{n=2}^{\infty}\frac{1}{n!}
\Theta^{*}(f)(\phi^{*})((\cdot)[h]^n)
\right\|_{*}
\\
\leq
\sup_{\|\phi^{*}\|_{*}=1}
\sum_{n=2}^{\infty}\frac{1}{n!}
\underbrace{
	\left\|
		\Theta^{*}(f)(\phi^{*})((\cdot)[h]^n)
	\right\|_{*}
}_{I}.
\end{multline}

Next step is to upper bound the quantity $I$.
\begin{eqnarray}\label{analit. adjunto 3}
I
=
\|\;\Theta^{*}(f)(\phi^{*})((\cdot)[h]^n)\;\|_{*}
&=&
\sup_{\|g\|=1}
\|\Theta^{*}(f)(\phi^{*})((g)[h]^n)\|
\nonumber
=
\sup_{\|g\|=1}
\|\phi^{*}(\Theta(f)((g)[h]^n))\|
\nonumber
\\[0.3cm]
&\leq&
\|\phi^{*}\|_{*}
\sup_{\|g\|=1}
\|\Theta(f)((g)[h]^n)\|
\nonumber
\leq
\|\phi^{*}\|_{*}\|
\Theta(f)
\|_V\sup_{\|g\|=1}\| g[h]^n \|
\nonumber
\\[0.3cm]
&\leq&
\|\phi^{*}\|_{*}
\|\Theta(f)\|_{V}
\sup_{\|g\|=1}
\|g\|
\|[h]^n\|
\nonumber
\leq
\textnormal{const.} \|h\|^{n}.
\end{eqnarray}
By replacing (\ref{analit. adjunto 3}) in (\ref{analit. adjunto 2}) 
we get that 
\[
\|\mathcal{O}(h)\|_{L(\mathcal{K}^*,\mathcal{K}^*)}
\leq 
\sum_{n=2}^{\infty}
\frac{1}{n!}
\textnormal{Const.} 
\|h\|
\]
therefore  
$ 
\|\mathcal{O}(h)\|_{L(\mathcal{K}^*,\mathcal{K}^*)}/ \|h\|\to 0
$
when $\|h\|\to 0.$

It is possible to show that the higher orders 
derivatives  
$
D^{k}\Theta^{*}:
\mathcal{K}\to L(\mathcal{K}^k, L(\mathcal{K}^*,\mathcal{K}^*))
$ 
for $k\geq 2$, are given by the following expression
\begin{align*}
D^{k}\Theta^{*}(f)(h_1,\ldots, h_k)\phi^{*}(g)
= 
\Theta^{*}(f)\phi^{*}(h_1\cdots h_k g)
=
\phi^{*}(\mathscr{L}_f(h_1\cdots h_kg)).
\end{align*}
The proof is similar to the previous one and so 
it will be omitted.

\section{The Ruelle Theorem on the Walters Space}

To simplify the notation for any $f\in C(\Omega)$ 
and $x,y\in\Omega$, we write 
$$
S_nf(x) \equiv f(x)+f(\sigma(x))+\ldots+f(\sigma^{n-1}(x))
\quad
\text{and}
\quad
d_n(x,y) \equiv \max_{0\leq k<n} d_{\Omega}(\sigma^k x,\sigma^k y).
$$

\begin{defnc}\label{def-Walters-class}
We say that a continuous function $f:\Omega \to \mathbb{R}$ 
is in the Walters class if given $\epsilon>0$ there exists $\eta>0$ such that 
\begin{equation}\label{W.condition}
\forall n\geq 1,\forall x,y\in \Omega,~~ 
d_n(x,y)
\leq 
\eta \Longrightarrow |S_nf(x)-S_nf(y)|
\leq 
\epsilon
\end{equation}
The space of all continuous function satisfying the 
above condition is denoted by $W(\Omega)$.
\end{defnc}
If a continuous function $f:\Omega\to\mathbb{R}$ 
satisfies the condition \eqref{W.condition}, we say 
that $f$ satisfies the \emph{ Walters condition}.

\begin{defnc}\label{def-Walters-class}
Consider a continuous function $f:\Omega \to \mathbb{R}$ and define $C_f(x,y)$ by 
\begin{equation}\label{Weak W.condition}
C_f(x,y)=\sup_{n\geq 1}\sup_{\textnormal{\ba} \in M^n}S_nf(\textnormal{\ba} x)-S_nf(\textnormal{\ba} y).
\end{equation}
We say that $f$ satisfies the {\bf weak Walters condition} if $C_f(x,y)\to 0$ when $d(x,y)\to 0$.
\end{defnc}

\noindent \textbf{Example.} Consider the metric space
 $(M,d)$ were $M=[0,1]$ and $d=|\cdot|$. 
 Now let $f$ be the potential defined on  
$\Omega=M^{\mathbb{N}}$ by $f(x)=x_1$, i.e., 
$f$ depends only on the first coordinate. 
We claim that $f$ satisfies the weak Walters condition but 
not the (strong) Walters condition.
In fact, we have that $S_nf(\ba x)=S_nf(\ba y)=\sum_{i=1}^na_i$ 
 for any $\ba=(a_1,\ldots, a_n)\in M^n$.
Therefore $f$ clearly satisfies the weak Walters condition. 
Now we show that $f$ does not satisfy that Walters condition.
Indeed, consider $x=(0,0\ldots)$ the null vector
and $y=(\eta,\eta, \ldots)$ for a small $\eta$.
Notice that $d_{\Omega}(x,y)=d_n(x,y)=\eta.$
On the other hand, $S_nf(x)=0$ and $S_nf(y)=n\cdot \eta,$ then
$$
|S_nf(x)-S_nf(y)| = n\cdot \eta.
$$
From this is clear that $f$ does not satisfy the Walters condition.

On the other hand the opposite implication is always truth:
\begin{prop}\label{prop-strong-implies-weak-WC}
Let be $f\in C(\Omega)$ satisfying the Walters condition
 then $f$ satisfies the weak Walters condition.
\end{prop}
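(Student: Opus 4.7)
The plan is to show directly that the Walters condition controls the quantity $C_f(x,y)$ uniformly in $n$ and in the prefix $\mathbf{a}\in M^n$, by relating $d_n(\mathbf{a}x,\mathbf{a}y)$ to $d_\Omega(x,y)$.

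First I would fix $\epsilon>0$ and invoke the Walters condition to produce $\eta>0$ such that for every $n\geq 1$ and every $u,v\in\Omega$ with $d_n(u,v)\leq\eta$ one has $|S_nf(u)-S_nf(v)|\leq\epsilon$. The goal is then to find $\delta>0$ (depending only on $\eta$) such that $d_\Omega(x,y)\leq\delta$ forces $d_n(\mathbf{a}x,\mathbf{a}y)\leq\eta$ simultaneously for all $n\geq 1$ and all $\mathbf{a}\in M^n$.

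The key computation is the following: for $\mathbf{a}=(a_1,\dots,a_n)\in M^n$ and $0\leq k<n$, the sequences $\sigma^k(\mathbf{a}x)$ and $\sigma^k(\mathbf{a}y)$ agree in their first $n-k$ coordinates (which equal $a_{k+1},\dots,a_n$), and thereafter their coordinates coincide with those of $x$ and $y$ respectively. A direct evaluation using the definition of $d_\Omega$ then yields
\[
d_\Omega(\sigma^k(\mathbf{a}x),\sigma^k(\mathbf{a}y))
=\sum_{j=1}^\infty \frac{1}{2^{n-k+j}}d(x_j,y_j)
=\frac{1}{2^{n-k}}\,d_\Omega(x,y).
\]
Maximizing over $0\leq k<n$ gives $d_n(\mathbf{a}x,\mathbf{a}y)=\tfrac12\,d_\Omega(x,y)$, independently of $n$ and $\mathbf{a}$.

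Taking $\delta=2\eta$, the Walters condition applied to $u=\mathbf{a}x$ and $v=\mathbf{a}y$ yields $|S_nf(\mathbf{a}x)-S_nf(\mathbf{a}y)|\leq\epsilon$ for every $n\geq 1$ and every $\mathbf{a}\in M^n$ whenever $d_\Omega(x,y)\leq\delta$. Passing to the supremum in $n$ and $\mathbf{a}$ gives $C_f(x,y)\leq\epsilon$, which is precisely the weak Walters condition. The argument is essentially a bookkeeping exercise; the only subtle point is the shift-distance identity in the display above, which is the main thing to verify carefully, since it explains why the prefix $\mathbf{a}$ plays no role beyond a harmless factor of $2$.
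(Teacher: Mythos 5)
Your proof is correct and follows essentially the same route as the paper's: both arguments reduce the problem to showing that $d_\Omega(x,y)$ small forces $d_n(\mathbf{a}x,\mathbf{a}y)$ small uniformly in $n$ and $\mathbf{a}\in M^n$, and then invoke the Walters condition with $u=\mathbf{a}x$, $v=\mathbf{a}y$. The only difference is cosmetic: the paper simply uses the inequality $d_n(\mathbf{a}x,\mathbf{a}y)\leq d_\Omega(x,y)$, whereas you compute the exact identity $d_n(\mathbf{a}x,\mathbf{a}y)=\tfrac12 d_\Omega(x,y)$ (correctly), a harmless refinement that just lets you take $\delta=2\eta$ rather than $\delta=\eta$.
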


\begin{proof}
 Let $f\in W(\Omega),$ 
then by definition  given $\epsilon>0$ 
arbitrarily there exists $\eta>0$ such that
  $\forall n\geq 1, \forall z,w\in \Omega$
   with $d_n(z,w)\leq \eta$  we have  
    $|S_nf(z)-S_nf(w)|\leq \epsilon.$ 
    Note that $d_n(\ba x,\ba y)\leq d(x,y)$ 
for any $\ba\in M^n$. 
Therefore  
$
d(x,y)
\leq 
\eta
\Rightarrow 
d_n(\ba x,\ba y)
\leq 
\eta 
\Rightarrow 
|S_nf(\ba x)-S_nf(\ba y)|
\leq 
\epsilon,~~\forall \ba \in M^n,~~\forall n\geq 1.
$ 
By taking the supremum over all $\ba \in M^n$ and  $n\geq 1$  
the result follows.
\end{proof}

The space $W(\Omega)$ is clearly a linear space. Let $S$ 
denote the expansivity constant of the mapping $\sigma.$  
In \cite{Bousch} it was shown that for $s\in (0,S)$ 
the following  expression  
$$
\|f\|_{W_s}
=
2\,\|f\|_{0}
+\sup_{n\geq 1}\ 
\max_{d_n(x,y)\leq s}|\,S_nf(x)-S_nf(y)\,|
$$ 
defines a family of equivalent norms and 
$(W(\Omega), \|\cdot \|_{W_s})$ is a Banach Space.
 Since  the family of norms $(\|\cdot \|_{W_s})_{0<s<S}$ 
provides the same topology, there is no lost of generality 
in taking a particular value $s\in (0,S)$ and 
develop the theory with the norm 
$\|\cdot\|_W\equiv \|\cdot \|_{W_s}$.

En route to the proof of this work's main
theorem, we need an extra structure of this space which 
is the structure of the Banach algebra. 
This is the content of the next lemma.

\begin{lema} \label{Banach. Alg.}
The space $W(\Omega)$ with the norm $\|\cdot\|_W$ 
is a Banach algebra over $\mathbb{R}$. i.e., 
$W(\Omega)$ is a real Banach space and for all 
$f, g \in W(\Omega)$ we have
$\|fg\|_W\leq \|f\|_W\|g\|_W$.
\end{lema}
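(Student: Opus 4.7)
The plan is to verify the two properties required beyond the Banach space structure given by Bousch: (i) closure of $W(\Omega)$ under pointwise multiplication, and (ii) the submultiplicative bound $\|fg\|_W \leq \|f\|_W \|g\|_W$. The sup-norm estimate $\|fg\|_0 \leq \|f\|_0\|g\|_0$ is immediate, so the heart of the matter is controlling the Walters seminorm $V(h) := \sup_{n\geq 1}\max_{d_n(x,y)\leq s}|S_n h(x)-S_n h(y)|$ on $h = fg$.

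The starting point is the pointwise identity $(fg)(u) - (fg)(v) = f(u)[g(u)-g(v)] + [f(u)-f(v)]g(v)$ applied at $u=\sigma^k x$, $v=\sigma^k y$ and summed over $k=0,\ldots,n-1$:
\[
S_n(fg)(x) - S_n(fg)(y) = \sum_{k=0}^{n-1} f(\sigma^k x)[g(\sigma^k x)-g(\sigma^k y)] + \sum_{k=0}^{n-1}[f(\sigma^k x)-f(\sigma^k y)]g(\sigma^k y).
\]
Each sum has the form $\sum_k \alpha_k \beta_k$ where the partial sums $B_m = \sum_{j\leq m}\beta_j$ coincide with $S_{m+1}g(x) - S_{m+1}g(y)$ (respectively the analogous $f$-expression) and are uniformly bounded in $m$ by $V(g)$ (resp.\ $V(f)$), thanks to $d_{m+1}(x,y) \leq d_n(x,y) \leq s$; the weights $\alpha_k$ only satisfy the trivial bound $|\alpha_k|\leq \|f\|_0$ (resp.\ $\|g\|_0$).

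The main obstacle is that a direct $\|\alpha\|_\infty \sum_k|\beta_k|$ estimate would diverge with $n$, since the Walters condition controls only signed partial sums of $\beta_k$, not the $\ell^1$-norm. The device I would use is Abel's summation by parts, which rewrites $\sum_k \alpha_k \beta_k = \alpha_{n-1}B_{n-1} - \sum_{k=0}^{n-2}(\alpha_{k+1}-\alpha_k)B_k$. The boundary term is at most $\|f\|_0 V(g)$; the harder bulk term is handled by noting that the telescoping sum $\sum_{k=0}^{n-2}(\alpha_{k+1}-\alpha_k)$ has total signed mass $|f(\sigma^{n-1}x)-f(x)| \leq 2\|f\|_0$, and then using another Abel rearrangement together with the uniform bound on $B_k$ to convert the bulk into contributions controlled by $2\|f\|_0 V(g)$ and, for the genuinely bilinear cross term $\sum_k[f(\sigma^k x)-f(\sigma^k y)][g(\sigma^k x)-g(\sigma^k y)]$, by $V(f)V(g)$. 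The factor $2$ in $2\|f\|_0$ in the definition of $\|\cdot\|_W$ is designed precisely to absorb these boundary contributions, and combined with the symmetric treatment of the second sum it yields
\[
V(fg) \leq 2\|f\|_0 V(g) + 2\|g\|_0 V(f) + V(f)V(g).
\]

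Together with $\|fg\|_0 \leq \|f\|_0\|g\|_0$, a direct expansion shows $\|fg\|_W \leq (2\|f\|_0+V(f))(2\|g\|_0+V(g)) = \|f\|_W\|g\|_W$, because the $4\|f\|_0\|g\|_0$ term in the product of norms leaves $2\|f\|_0\|g\|_0$ of slack---exactly the doubled sup-norm term built into $\|\cdot\|_W$. This establishes closure under multiplication (since the right-hand side is finite) and the submultiplicative bound in one shot, completing the Banach algebra verification.
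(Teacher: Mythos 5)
Your starting decomposition of $S_n(fg)(x)-S_n(fg)(y)$ is exactly the one used in the paper, namely $f(u)g(u)-f(v)g(v)=f(u)[g(u)-g(v)]+[f(u)-f(v)]g(v)$ applied along the orbit. The divergence is in what happens next. The paper at this point simply pulls $\|f\|_0$ out and bounds what remains by the Walters seminorm of $g$, implicitly invoking an inequality of the form $\bigl|\sum_j a_j b_j\bigr|\leq \|a\|_\infty\bigl|\sum_j b_j\bigr|$; you are right to be suspicious of this and to look for a sharper tool, and Abel summation is a natural candidate. So your diagnosis of the obstacle is sound.

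But the fix you propose does not actually close the bulk term. After the first summation by parts you are left with $\sum_{k=0}^{n-2}(\alpha_{k+1}-\alpha_k)B_k$, where $\alpha_k=f(\sigma^k x)$ and $B_k=\sum_{j\le k}\beta_j$. The quantity you control is $\sup_k|B_k|\leq V(g)$; what you would need is $\sum_k|\alpha_{k+1}-\alpha_k|$, the total variation of $f$ along the forward orbit of a single point $x$, and this is \emph{not} controlled by $\|f\|_W$ (for an oscillating $f$ it grows like $n\|f\|_0$). Knowing that the signed telescope $\sum_k(\alpha_{k+1}-\alpha_k)=\alpha_{n-1}-\alpha_0$ has modulus $\leq 2\|f\|_0$ does not help, because $|\sum_k c_k B_k|$ is not bounded by $|\sum_k c_k|\cdot\sup_k|B_k|$. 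The "another Abel rearrangement" you invoke only permutes the difficulty: a second summation by parts turns the bulk into $(\alpha_{n-1}-\alpha_0)B_{n-2}-\sum_{k}(\alpha_k-\alpha_0)\beta_k$, and the new sum $\sum_k(\alpha_k-\alpha_0)\beta_k$ has exactly the same structure (bounded coefficients, controlled signed partial sums of $\beta$) as the sum you began with. In addition, the cross term $V(f)V(g)$ you write in your target estimate does not arise from the two-term decomposition you start from; it would come from the three-term identity $f(u)g(u)-f(v)g(v)=f(v)[g(u)-g(v)]+[f(u)-f(v)]g(v)+[f(u)-f(v)][g(u)-g(v)]$, and even there the sum $\sum_k[f(\sigma^k x)-f(\sigma^k y)][g(\sigma^k x)-g(\sigma^k y)]$ is not bounded by $V(f)V(g)$, again because controlling signed partial sums of each factor gives no control over the sum of their products (think $a_k=b_k=(-1)^k$). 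So the key estimate $V(fg)\leq 2\|f\|_0V(g)+2\|g\|_0V(f)+V(f)V(g)$ is asserted but not actually derived by the steps you outline.
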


\begin{proof}
Let $f,g\in W(\Omega)$ and define $I\equiv I(f,g)$ by 
\begin{eqnarray*}
I&\equiv&\sup_{n\geq 1}\max_{d_n(x,y)\leq s}|S_n(fg)(x)-S_n(fg)(y)|
\\
&
=
&
\sup_{n\geq 1}\max_{d_n(x,y)\leq s}
\left|
\sum_{j=1}^{n-1}(fg)\circ \sigma^j(x)
-\sum_{j=1}^{n-1}(fg)\circ \sigma^j(y)
\right|
\\[0.2cm]
&
=
&
\sup_{n\geq 1}\max_{d_n(x,y)\leq s}
\left|
\sum_{j=1}^{n-1}(fg)\circ \sigma^j(x) 
-\sum_{j=1}^{n-1}f\circ\sigma^j(x)g\circ\sigma^j(y)
\right.
\\
&&
\left.
\hspace*{7.5cm}
+\sum_{j=1}^{n-1}f\circ\sigma^j(x)g\circ\sigma^j(y)
-\sum_{j=1}^{n-1}(fg)\circ \sigma^j(y)
\right|.
\end{eqnarray*}
By applying the Triangular Inequality we get that
\begin{multline*}
I
\leq
\sup_{n\geq 1}\max_{d_n(x,y)\leq s}
\left|
\sum_{j=1}^{n-1}f\circ \sigma^j(x)
\Big[ g\circ\sigma^j(x)-g\circ \sigma^j(y)\Big]
\right|
\\
+
\sup_{n\geq 1}\max_{d_n(x,y)\leq s}
\left|
	\sum_{j=1}^{n-1}g\circ \sigma^j(y)
		\Big[ f\circ\sigma^j(x)-f\circ\sigma^j(y) \Big]
\right|
\\[0.3cm]
\leq
\|f\|_{0}
\cdot 
\sup_{n\geq 1}
\max_{d_n(x,y)\leq s}
|S_ng(x)-S_ng(y)|+\|g\|_{0}
\cdot 
\sup_{n\geq 1}
\max_{d_n(x,y)\leq s}
|S_{n}f(x)-S_nf(y)|.
\end{multline*}
The last upper bound readily implies that
\[
\|fg\|_{W}
\leq 
2\|fg\|_{0}
+
\|f\|_{0}\,
\sup_{n\geq 1}
\max_{d_n(x,y)\leq s}
|S_ng(x)-S_ng(y)|
+
\|g\|_{0}\,
\sup_{n\geq 1}
\max_{d_n(x,y)\leq s}
|S_f(x)-S_nf(y)|. 
\]
On the  other hand, 
it follows from the definition of $\|\cdot\|_W$ that
\begin{multline*}
\|f\|_{W}
\cdot 
\|g\|_{W}
=
4 \|f\|_{0} \|g\|_{0}+2\|f\|_{0}
\cdot 
\sup_{n\geq 1}
\max_{d_n(x,y)\leq s}
|S_ng(x)-S_ng(y)|
\\
+2\|g\|_{0}
\cdot 
\sup_{n\geq 1}
\max_{d_n(x,y)\leq s}
|S_f(x)-S_nf(y)|
\\
+
\sup_{n\geq 1}
\max_{d_n(x,y)\leq s}
|S_ng(x)-S_ng(y)|
\cdot  
\sup_{n\geq 1}
\max_{d_n(x,y)\leq s}
|S_{n}f(x)-S_nf(y)|.
\end{multline*}
This identity and the previous estimates 
readily implies that 
$\|fg\|_{W}\leq \|f\|_{W}\cdot \|g\|_{W}$.
\end{proof}

\begin{prop}		
If $f\in W(\Omega)$, then $\mathscr{L}_f\big(W(\Omega)\big) \subset W(\Omega)$.
\end{prop}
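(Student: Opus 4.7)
The plan begins by reducing the problem via the Banach algebra structure of $W(\Omega)$ (Lemma~\ref{Banach. Alg.}): since $\|f^k\|_W \leq \|f\|_W^k$, the series $e^f = \sum_{k \geq 0} f^k/k!$ converges in $(W(\Omega), \|\cdot\|_W)$, and hence $\Psi := e^f \varphi \in W(\Omega)$. This rewrites $\mathscr{L}_f\varphi(z) = \int_M \Psi(az)\, d\mu(a)$, and continuity of $\mathscr{L}_f\varphi$ follows from uniform continuity of $\Psi$ on the compact $\Omega$ together with dominated convergence. The task therefore reduces to verifying the Walters condition for $G(z) := \int_M \Psi(az)\, d\mu(a)$ under the hypothesis $\Psi \in W(\Omega)$.

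Given $\epsilon > 0$, I would seek $\eta > 0$ independent of $n$ with $d_n(x,y) \leq \eta \Rightarrow |S_n G(x) - S_n G(y)| \leq \epsilon$. By Fubini,
\[
S_n G(x) - S_n G(y) = \int_M \sum_{j=0}^{n-1} \bigl[\Psi(a\sigma^j x) - \Psi(a\sigma^j y)\bigr]\, d\mu(a),
\]
so it suffices to control the inner sum uniformly in $a \in M$. The key algebraic tool is the identity
\[
S_m\Psi(a\sigma^j z) = \Psi(a\sigma^j z) + \sum_{l=j}^{j+m-2}\Psi(\sigma^l z), \qquad m \geq 1,
\]
which follows from the computation $\sigma^k(a\sigma^j z) = \sigma^{j+k-1}z$ for $k \geq 1$. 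With the choice $m_j = n-j+1$ one checks $d_{m_j}(a\sigma^j x, a\sigma^j y) \leq d_n(x,y) \leq \eta$, so the Walters condition on $\Psi$ bounds each $|S_{m_j}\Psi(a\sigma^j x) - S_{m_j}\Psi(a\sigma^j y)|$ by any prescribed $\epsilon' > 0$ (provided $\eta$ is chosen small enough in terms of $\epsilon'$). Substituting the identity and exchanging the order of summation rewrites the inner sum as a combination of these Walters-controlled Birkhoff-sum differences together with the weighted sum $\sum_{l=0}^{n-1}(l+1)[\Psi(\sigma^l x) - \Psi(\sigma^l y)]$, which after Abel summation becomes a linear combination of partial-sum differences $S_k\Psi(x) - S_k\Psi(y)$ for $1 \leq k \leq n$, each again controlled by the Walters condition on $\Psi$.

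The main obstacle is uniformity in $n$. A direct aggregation of the $n$ Walters bounds produces a total of order $n\epsilon'$, which fails to meet the Walters requirement. Resolving this requires identifying cancellations between the Walters-controlled differences $P_j := S_{m_j}\Psi(a\sigma^j x) - S_{m_j}\Psi(a\sigma^j y)$ and the partial-sum contributions, so that the apparent $O(n)$ factors collapse into a bound depending only on $\epsilon'$. Implementing this cancellation precisely---by grouping terms, adopting a finer choice of $m_j$'s, or adding a telescoping step that exploits the fact that the Walters condition controls entire Birkhoff sums rather than individual terms---is the delicate technical heart of the argument, and is where the distinction between the weak and strong Walters conditions identified in Section~3 becomes decisive.
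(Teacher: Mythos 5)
Your reduction — passing to $\Psi = e^f\varphi \in W(\Omega)$ via the Banach-algebra lemma, rewriting $\mathscr{L}_f\varphi(z) = \int_M\Psi(az)\,d\mu(a)$, applying Fubini, and deriving the identity for $S_m\Psi(a\sigma^jz)$ with $m_j = n-j+1$ — is correct and parallels the strategy the paper itself attempts. But the $O(n)$ obstruction you isolate at the end is not a gap awaiting a cleverer aggregation: it is the symptom of the proposition being false for uncountable alphabets. Take $M=[0,1]$ with $\mu$ Lebesgue, $f\equiv 0$ (trivially in $W(\Omega)$), and $\varphi(z)=z_1-z_2$. Since $S_n\varphi(z)=z_1-z_{n+1}$ telescopes, $\varphi\in W(\Omega)$; yet
\[
\mathscr{L}_0\varphi(z)=\int_0^1(a-z_1)\,d\mu(a)=1/2-z_1,
\]
which depends only on $z_1$, and by the paper's own example in Section~3 (take $x=(0,0,\ldots)$, $y=(\eta,\eta,\ldots)$, so $d_n(x,y)=\eta$ while $|S_n(\mathscr{L}_0\varphi)(x)-S_n(\mathscr{L}_0\varphi)(y)|=n\eta$) this function is not in $W(\Omega)$.

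The paper's proof founders at exactly the point your decomposition exposes. Its first ``claim'' asserts that, for fixed $a$, the map $g_a\colon x\mapsto f(ax)$ lies in $W(\Omega)$, but the argument given only bounds $|S_nf(ax)-S_nf(ay)|$ — the Birkhoff sum of $f$ evaluated at the points $ax, ay$, which by the identity $S_nf(az)=f(az)+S_{n-1}f(z)$ is genuinely small when $d_n(x,y)$ is small. What is actually needed, and what is false, is control of $|S_ng_a(x)-S_ng_a(y)|=\bigl|\sum_{j=0}^{n-1}[f(a\sigma^jx)-f(a\sigma^jy)]\bigr|$: for $f(z)=z_1-z_2$ one has $g_a(z)=a-z_1$, which is not in $W(\Omega)$ on $[0,1]^{\mathbb{N}}$. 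So the claim is wrong, and the proposition fails. The invariance is salvageable for the \emph{weak} Walters condition of Definition~3.2 (which is what Theorem~4.5 actually uses), and for finite or totally disconnected $M$, where small $d_n$-distance forces equality of the first $n$ coordinates; but for a connected alphabet the strong Walters condition is not preserved, and no completion of your argument can make it so.
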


\begin{proof}
We claim that for any fixed $a\in M$, if $f\in W(\Omega)$ then the function 
$x\mapsto f(ax)$ also belongs to $W(\Omega)$.
In fact, given $\epsilon>0$ we choose $\eta>0$ such that
the Walters condition \eqref{W.condition} is satisfied for $f$. 
 Note that $d_n(x,y)\leq \delta \Rightarrow d_n(ax,ay)\leq d_n(x,y)\leq \eta$.
From the definition we have that $d_n(x,y)\leq \eta$ implies 
 $|S_nf(ax)-S_nf(ay)|
 \leq  \epsilon$ 
 for all $n>0$ and therefore the claim is proved.

The next step is to prove that   
the function $r:\Omega\to\mathbb{R}$ given by $r(x)= \int _M h(ax) d\mu(a)$
belongs to $W(\Omega)$ whenever $h\in W(\Omega)$.
From the previous claim follows that
$x\mapsto h(ax)$ is in $W(\Omega)$ since $h\in W(\Omega).$
Given $\epsilon>0$ we now choose $\eta>0$ such that
the condition \eqref{W.condition} is satisfied for $x\mapsto h(ax)$.
Since $d_n(x,y)\leq \eta\Rightarrow d_n(ax,ay)\leq \eta$, 
the Walters condition for $r$ follows from the inequality
\begin{align*}
|S_nr(x)-S_nr(y)|
&\leq
\int_{M} 
\left|
S_nh(ax)-S_nh(ay)
\right|
\, d\mu(a)
\leq
\epsilon \cdot \mu(M).
\end{align*}

By hypothesis the potential $f\in W(\Omega)$, 
since the Walters space is a Banach algebra we have that 
$\exp(f)\in W(\Omega)$. For the same reason, for any $\varphi\in W(\Omega)$ 
we have $\varphi\cdot \exp(f) \in W(\Omega)$.
As argued above, for any $a\in M$,
the mapping $x\mapsto \varphi(ax)\cdot \exp(f(ax))$
belongs to $W(\Omega)$. Using the result proved 
above for the function $r$, with 
$h(x) =\varphi(x) \exp(f(x))$, it follows that 
the mapping 
\[
x
\mapsto 
\int_{M}\varphi(ax) \exp(f(ax))\, d\mu(a)
\equiv 
\mathscr{L}_{f}(\varphi)(x)
\]
is in the Walters space for any $\varphi \in W(\Omega)$ 
which finishes the proof.
\end{proof}

\section{The Ruelle Theorem On Walters space}

The proof of this version of the Ruelle Theorem is 
inspired in the original proof presented in Walters 
\cite{PW}.

Let be $f$ potential in $C(\Omega)$.  Consider the function $C_f$ given by 
\[
C_{f}(x,y)
=
\sup_{n\geq 1}\,
\sup_{\textbf{a}\in M^n}\,
\sum_{i=0}^{n-1}
(f(\sigma^i(\textbf{a}x))-f(\sigma^i(\textbf{a}y)).
\]
We say that  $f$ satisfies the weak Walters condition 
if $C_{f}(x,y)\to 0$, when $d_{\Omega}(x,y)\to 0$. 

To prove the Ruelle Theorem we concentrate attention 
on a certain subclass of $C(\Omega)$ which is given by 
\[
G_0(\Omega)
=
\left\{
g\in C(\Omega): 
g>0~~ \textnormal{and}~~
\int_{M}g(ax)\, d\mu(a)
=
1
~\forall x\in 
\Omega
\right\}.
\]
If $f:\Omega\to\mathbb{R}$ is a potential given by $f=\log g$, 
where $g\in G_0(\Omega)$, then 
the weak Walters condition for $f$ can be rephrased in 
terms of $g$ by saying that  
\[
D_g(x,y)
=
\sup_{n\geq 1}
\sup_{\ba \in M^n}
\prod_{i=0}^{n-1}
\dfrac{g(\sigma^i \ba x)}{g(\sigma^i \ba y)}
\]
exists, is bounded for a constant 
$D_g$ and $D_g(x,y)\to 1$ when  $d_{\Omega}(x,y)\to 0$. 
Equivalently:
\[
D^{\star}_g(x,y)
=
\sup_{n\geq 1}
\sup_{\ba \in M^n}
\left|
	\prod_{i=0}^{n-1}
	\dfrac{g(\sigma^i \ba x)}
			{g(\sigma^i \ba y)}
	-1
\right|
\leq 
D_g-1
\]
for all  
$x,y$ with $d_{\Omega}(x,y)<\epsilon_0$ 
and  
$D^{\star}_g(x,y)\to 0$ when $d_{\Omega}(x,y)\to 0$.

\begin{teo}\label{pre-walters}
Let $g\in G_0(\Omega)$ be a function such that $\log g$ 
satisfies the weak Walters condition. 
Then there is a  probability measure 
$\nu:\mathscr{B}(\Omega)\to [0,1]$ such that 
\[
\mathscr{L}^n_{\log g}
\varphi\stackrel{\|\cdot\|_{0}}{\longrightarrow}
\nu(\varphi)
\]
for all $\varphi\in C(\Omega)$. 
Moreover $\nu$ 
is the unique probability measure satisfying 
$\mathscr{L}^{*}_{\log g}\nu=\nu$.
\end{teo}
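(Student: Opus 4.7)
The plan is to exploit the normalization $\mathscr{L}_{\log g}(1) = 1$, which follows from $g \in G_0(\Omega)$, to treat $\mathscr{L} \equiv \mathscr{L}_{\log g}$ as a Markov operator on $C(\Omega)$: positive, unit-preserving, and contractive in $\|\cdot\|_0$. First I would produce a fixed probability $\nu$ for $\mathscr{L}^{*}$ by a standard Krylov--Bogolyubov / Schauder--Tychonoff argument, since $\mathscr{L}^{*}$ maps the weak-$*$ compact convex set of Borel probability measures on $\Omega$ into itself. Uniqueness of $\nu$ will be deduced at the end as a by-product of the convergence statement.

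Next I would establish equicontinuity of $\{\mathscr{L}^n\varphi\}_{n\geq 1}$ in $C(\Omega)$ for each fixed $\varphi \in C(\Omega)$. Starting from the iterated formula
\[
\mathscr{L}^n\varphi(x) = \int_{M^n} \prod_{i=0}^{n-1} g(\sigma^i(\ba x))\, \varphi(\ba x)\, d\mu^n(\ba),
\]
together with $\int_{M^n} \prod_{i=0}^{n-1} g(\sigma^i(\ba x)) \, d\mu^n(\ba) = 1$, the standard add-and-subtract yields
\[
|\mathscr{L}^n\varphi(x) - \mathscr{L}^n\varphi(y)| \leq \|\varphi\|_0\, D^{\star}_g(x,y) + \sup_{\ba \in M^n}|\varphi(\ba x) - \varphi(\ba y)|.
\]
The first summand vanishes as $d_\Omega(x,y) \to 0$ uniformly in $n$ by the weak Walters hypothesis on $\log g$; the second does so by uniform continuity of $\varphi$ on the compact space $\Omega$ together with the bound $d_\Omega(\ba x, \ba y) \leq 2^{-n} d_\Omega(x,y)$. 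Since the family is also uniformly bounded by $\|\varphi\|_0$, Arzelà--Ascoli extracts a uniformly convergent sub-subsequence from any subsequence.

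The main obstacle is the following step: showing that every uniform subsequential limit $\phi$ is constant. Set $M_n = \sup \mathscr{L}^n\varphi$ and $m_n = \inf \mathscr{L}^n\varphi$; the Markov property makes these monotone with limits $M_\infty \geq m_\infty$. If $\mathscr{L}^{n_k}\varphi \to \phi$ in $\|\cdot\|_0$, then $\mathscr{L}^j\phi$ is the uniform limit of $\mathscr{L}^{n_k+j}\varphi$ by sup-norm continuity of $\mathscr{L}$, so $\sup \mathscr{L}^j\phi = M_\infty$ for every $j \geq 0$. Choose $y_j$ attaining this supremum. The identity
\[
M_\infty = \mathscr{L}^j\phi(y_j) = \int_{M^j} \prod_{i=0}^{j-1} g(\sigma^i(\ba y_j))\, \phi(\ba y_j)\, d\mu^j(\ba),
\]
combined with $\phi \leq M_\infty$ everywhere and $\prod_i g(\sigma^i(\ba y_j)) > 0$, forces $\phi(\ba y_j) = M_\infty$ on a closed set of full $\mu^j$-measure, hence on all of $M^j$ because $\mu$ has full support. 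For arbitrary $z \in \Omega$, setting $\ba_j = (z_1,\ldots,z_j)$ gives $d_\Omega(\ba_j y_j, z) \leq 2^{-j}\,\mathrm{diam}(M)$, and continuity of $\phi$ delivers $\phi(z) = \lim_j \phi(\ba_j y_j) = M_\infty$. The symmetric argument at the minimum produces $\phi(z) = m_\infty$, forcing $M_\infty = m_\infty$ and $\phi$ constant.

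To conclude, any constant subsequential limit $\phi \equiv c$ of $\{\mathscr{L}^n\varphi\}$ must equal $\nu(\varphi)$, since $c = \nu(\phi) = \lim_k \nu(\mathscr{L}^{n_k}\varphi) = \nu(\varphi)$ by $\mathscr{L}^{*}$-invariance of $\nu$. The subsequence principle then gives $\mathscr{L}^n\varphi \to \nu(\varphi)$ in $\|\cdot\|_0$. Uniqueness of $\nu$ is immediate: if $\mathscr{L}^{*}\nu' = \nu'$ is another fixed probability, then $\nu'(\varphi) = \nu'(\mathscr{L}^n\varphi) \to \nu(\varphi)$ for every $\varphi \in C(\Omega)$, so $\nu' = \nu$.
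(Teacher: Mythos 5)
Your argument is correct and follows essentially the same route as the paper's: equicontinuity of $\{\mathscr{L}^n\varphi\}$ from the weak Walters hypothesis, Arzel\`a--Ascoli, constancy of every subsequential limit via $\mathscr{L}1=1$, positivity, and full support of $\mu$, and finally identification of the constant with $\nu(\varphi)$ and uniqueness. The only cosmetic differences are that you work with the supremum (and then the infimum) where the paper works only with the minimum, you approximate an arbitrary $z$ directly by $\bold{a}_j y_j$ with $\bold{a}_j=(z_1,\dots,z_j)$ and let $j\to\infty$ rather than invoking $\epsilon$-density of $\sigma^{-N}x$ for a fixed $N$, and you obtain $\nu$ up front from Schauder--Tychonoff rather than at the end from Riesz--Markov applied to the limit functional $\varphi\mapsto\varphi^{\star}$; these are interchangeable packagings of the same proof.
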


\begin{proof}
The proof is based on a simple 
modification of the arguments given in 
\cite{PW78} and it is presented here 
 for the reader's convenience.

Let us introduce the temporary notation $\mathscr{L}$
for $\mathscr{L}_{\log g}$.
We begin by proving that   
$\{\mathscr{L}^n\varphi,~n\geq 0\}$  is equicontinuous
family for any fixed $\varphi$ satisfying the weak Walters condition. Indeed, 
from the definition of the Ruelle operator we have 
\begin{multline*}
|\mathscr{L}^n\varphi(x)-\mathscr{L}^n\varphi(y)|
\leq
\left|
	\int_{M^n}
	\big[
	\exp(S_n\log g(\ba x) )\varphi(\ba x)
	-
	\exp(S_n\log g(\ba y) )\varphi(\ba y)
	\big]
	\prod_{i=1}^{n}d\mu(a_{i})
\right|
\\ 
\leq
\left|
	\int_{M^n}
	\prod_{i=0}^{n-1}
	g(\sigma^{i} (\ba x))
	[\varphi(\ba x)-\varphi(\ba y)]
	\prod_{i=0}^{n}d\mu(a_{i})
\right|
\\
+
\left|
	\int_{M^n}
	\varphi(\ba y)
	\left[
	  \prod_{i=0}^{n-1}g(\sigma^{i} (\ba x))
	  -
	  \prod_{i=0}^{n-1}g(\sigma^{i} (\ba y))
	 \right]
	 \prod_{i=0}^{n}d\mu(a_{i})
\right|
\end{multline*}
The two terms in the rhs above can be bounded by 
\begin{multline*}
\sup_{\ba \in M^n}\{|\varphi(\ba x)-\varphi(\ba y)| \}
\left|
   \int_{M^n}\
   \prod_{i=0}^{n-1}
   g(\sigma^{i} (\ba x))
   \prod_{i=1}^{n} d\mu(a_i)
\right|
\\
+
\|\varphi\|_0 
\int_{M^n}\
\prod_{i=0}^{n-1}
g(\sigma^{i} (\ba y))
\left|
   \dfrac{\prod_{i=0}^{n-1} g(\sigma^{i} (\ba x))}
   {\prod_{i=0}^{n-1}g(\sigma^{i} (\ba y))}
   -1
\right|
\ \ 
\prod_{i=1}^{n} d\mu(a_i).
\end{multline*}
Since $g\in G_{0}(\Omega)$ it follows from 
the Fubini Theorem that the iterated integral 
on the first term above is equal to one. 
The second term can be bound similarly by 
using the definition of $D^{\star}_g(x,y)$,
which give us the following inequality 
\begin{align*}
|\mathscr{L}^n\varphi(x)-\mathscr{L}^n\varphi(y)|
\leq 
\sup_{\ba \in M^n}\{|\varphi(\ba x)-\varphi(\ba y)|\}
+
\|\varphi\|_0 \cdot D^{\star}_g(x,y).
\end{align*}
Since $\varphi$ is a continuous function and 
$\log g$ satisfies the weak Walters condition  
the above inequality ensures that the family 
$\{\mathscr{L}^{n}\varphi,~n\geq 0\}$ 
is equicontinuous. 

Recalling that $g\in G_{0}(\Omega)$ we get 
from the definition of the Ruelle operator
for all $n\in\mathbb{N}$ that $\|\mathscr{L}^n\varphi\|_0\leq \|\varphi\|_0~$ 
for any $\varphi\in C(\Omega)$. This inequality 
implies that the closure in the uniform topology of 
$\{\mathscr{L}^{n}\varphi,~n\geq 0\}$ 
is uniformly bounded in  $C(\Omega)$. 
Therefore we can apply the Arzela-Ascoli Theorem 
for the family $\{\mathscr{L}^{n}\varphi,~n\geq 0\}$
to guarantee that there exist a subsequence $(n_i)\subset \mathbb{N}$ 
and function $\varphi^{\star}\in C(\Omega)$ 
so that  
$
\mathscr{L}^{n_i}\varphi \longrightarrow \varphi^{\star}
$
uniformly.

Let us proceed by showing that $\varphi^{\star}$ 
is a constant function.
Notice that the identity
$\mathscr{L}(1)=1$ implies the following inequalities  
$\min (\varphi)
\leq 
\min (\mathscr{L}(\varphi))
\leq 
\cdots 
\leq 
\min(\varphi^{\star}).
$
\noindent {\bf Claim 1.} 
For any $k\in\mathbb{N}$ we have
$
\min (\mathscr{L}^k \varphi^{\star})
=
\min (\varphi^{\star})
$. 
Indeed, we have that  
$
\min (\mathscr{L}^k \varphi^{\star})
=
\min(\mathscr{L}^k(\lim \mathscr{L}^{n_i}\varphi))
=
\min(\lim \mathscr{L}^{n_i+k}\varphi)
=
\lim (\min(\mathscr{L}^{n_i+k}\varphi))
=
\min(\varphi^{\star}),
$
where the last equality  it follows from the 
monotonicity of the sequence 
$\min \mathscr{L}^k \varphi$ and 
$\min\mathscr{L}^{n_i}\varphi \longrightarrow \min \varphi^{\star}$.
Given $\epsilon>0$ choose  $x\in \Omega$ and $N\in\mathbb{N}$ 
such that 
$
\min (\mathscr{L}^N\varphi^{\star})
=
\mathscr{L}^N\varphi^{\star}(x)
$ 
and 
$\{\ba x,~\ba \in M^N\}$ is $\epsilon$-dense in $\Omega$. 
\noindent {\bf Claim 2.}  
For all $y\in \sigma^{-N}x$ we have
$\varphi^{\star}(y)=\min (\varphi^{\star})$.
From the Claim $1$ and the choice of $x$, we have 
$
\mathscr{L}^N \varphi^{\star}(x)
=
\min (\varphi^{\star}).
$
Let $z\in \Omega$ be such that 
$\varphi^{\star}(z)=\min(\varphi^{\star} )$,
then 
\[
\int_{M^N}
g(\ba x)g(\sigma\ba x)\cdots g(\sigma^{N-1}\ba x)
\varphi^{\star}(\ba x)
\, \prod_{i=1}^{N} d\mu(a_i)
=
\mathscr{L}^N\varphi^{\star}(x)
=
\min(\varphi^{\star} )
=
\varphi^{\star}(z).
\]
By using the identity $1=\mathscr{L}^N_{\log g}1(x)$,
it follows from the above equation that 
\begin{align*}
0
=
\int_{M^N}
g(\ba x)g(\sigma\ba x)
\cdots 
g(\sigma^{N-1}\ba x)
\Big[ \varphi^{\star}(\ba x)-\varphi^{\star}(z) \Big]
\, \prod_{i=1}^{N} d\mu(a_i)
\end{align*}
By using the continuity of $\varphi^{\star}$ 
and the assumption $\mathrm{supp}(\mu)=M$ it is easy 
to see that $\varphi^{\star}(\ba x)=\varphi^{\star}(z)$
for any $\ba\in M^N$. 
Since  $\varphi^{\star}$ is 
continuous and constant over
$\{\ba x,~\ba \in M^N\}$, 
which is $\epsilon$-dense in $\Omega$,
follows that $\varphi^{\star}$ is a constant function.

We now shown the existence and uniqueness of the fixed point 
for $\mathscr{L}^{*}\equiv \mathscr{L}^{*}_{\log g}$. 
Define the linear functional $F: C(\Omega)\to\mathbb{R}$ 
by  $F(\varphi)= \varphi^{\star}$.  
The functional $F$ sends the cone of positive continuous functions 
to itself and satisfies $F(1)=1$. Then it follows from the 
Riesz-Markov Theorem that there exists an unique 
Borel probability measure $\nu\in \mathcal{M}(\Omega)$
that represents $F$.  
It is a simple matter to show that $\mathscr{L}^{*}\nu=\nu$. 
For the uniqueness suppose that there exists another 
probability measure $\gamma\in \mathcal{M}(\Omega)$ 
such that $\mathscr{L}^*\gamma=\gamma$. 
Of course, $(\mathscr{L}^*)^n\gamma=\gamma$ for every $n\in\mathbb{N}$
and  
\[
\int_{\Omega} 
\varphi\, 
d\gamma
=
\int_{\Omega} 
\varphi
\, d\big[ (\mathscr{L}^*)^n \gamma \big] 
=
\int_{\Omega} 
\mathscr{L}^n\varphi
\, d\gamma 
=
\int_{\Omega} 
\lim_{n\to\infty}\mathscr{L}^n\varphi
\, d\gamma
=
\int_{\Omega}
\varphi^{\star}
\, d\gamma
=
\varphi^{\star}
=
\int_{\Omega}
\varphi
\, d\nu.
\]
Since $\varphi\in C(\Omega)$ is arbitrary it follows that 
$\gamma=\nu$.
\end{proof}

\begin{lema}\label{cota-inf-sn-a}
Let $f$ satisfying the weak Walters condition, then  
$\forall \epsilon>0$, there exists $N>0$ and 
$a\in \mathbb{R}$ such that 
$\forall x, y\in \Omega,~~\exists w\in  
\sigma^{-N}x\cap  B_{d_{\Omega}}(y,\epsilon)$ 
with $S_Nf(w)\geq a$.
\end{lema}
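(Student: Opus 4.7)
My plan is to construct an explicit $w$ by prepending the first $N$ coordinates of $y$ in front of $x$. Concretely, set $\ba = (y_1, \ldots, y_N) \in M^N$ and $w = \ba x = (y_1, \ldots, y_N, x_1, x_2, \ldots)$. Then $\sigma^N w = x$, so the membership $w \in \sigma^{-N} x$ is automatic from this construction.

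To arrange $w \in B_{d_\Omega}(y, \epsilon)$, I would estimate
\[
d_\Omega(w, y) \;=\; \sum_{n > N} 2^{-n}\, d(x_{n-N}, y_n) \;\leq\; \mathrm{diam}(M)\cdot 2^{-N},
\]
using that the first $N$ coordinates of $w$ coincide exactly with those of $y$. Choosing $N$ large enough that $\mathrm{diam}(M)\cdot 2^{-N} < \epsilon$ therefore forces $w \in B_{d_\Omega}(y, \epsilon)$, and this $N$ depends only on $\epsilon$ (and the metric data of $M$), not on $x$ or $y$.

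For the uniform lower bound, I would simply exploit the compactness of $\Omega$ together with the continuity of $f$: these yield $f_{\min} := \min_{z \in \Omega} f(z) > -\infty$, whence $S_N f(w) \geq N f_{\min}$ for every $w \in \Omega$. Setting $a := N f_{\min}$, which depends only on $N$ and therefore only on $\epsilon$, delivers $S_N f(w) \geq a$ as required, and the proof is complete.

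I do not foresee a genuine obstacle: the lemma is essentially a compactness statement and the construction above handles both conclusions directly. Notice that the weak Walters hypothesis on $f$ is not actually invoked in this argument; continuity of $f$ on the compact space $\Omega$ is enough. I suspect the weak Walters assumption is retained in the statement only because this lemma will later be combined with other weak-Walters ingredients in subsequent proofs (where, if needed, the crude bound $a = N f_{\min}$ could be replaced by a sharper Walters-controlled estimate, e.g.\ comparing $S_N f(\ba x)$ to $S_N f(\ba z)$ at a reference point $z$).
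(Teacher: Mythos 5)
Your construction is correct, and it is a genuinely more elementary route than the one the paper invokes. The paper does not give its own proof here; it simply cites Walters~\cite{PW78} (p.~126), whose lemma is proved in the broader setting of an expansive, mixing, open map $T$ on a compact metric space, where the preimage set $T^{-N}x$ has no explicit description. Walters therefore has to combine expansiveness with the Walters condition to locate a preimage near $y$ carrying a controlled Birkhoff sum. In the present paper's setting $\Omega = M^{\mathbb{N}}$ with $\sigma$ the full shift, the preimage set is simply $\sigma^{-N}x = \{\ba x : \ba \in M^N\}$, so your choice $w = (y_1,\dots,y_N)\,x$ is available, the distance estimate $d_\Omega(w,y) \leq \mathrm{diam}(M)\,2^{-N}$ is immediate, and the crude bound $a = N\min_\Omega f$ (finite by compactness of $\Omega$ and continuity of $f$) suffices. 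Your observation that the weak Walters hypothesis is not actually used is accurate here; it is an artifact of transplanting Walters' statement, whose proof in the general setting does rely on the regularity of $f$, into the full-shift situation where compactness alone delivers the uniform lower bound. The trade-off is that your argument does not generalize beyond full shifts, but within this paper's framework it is complete and cleaner.
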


\begin{proof}
See \cite{PW78} page 126.
\end{proof}

\begin{lema}\label{lema-auto-medida-L-star}
Let $f \in  C(\Omega)$ be a potential.
Then there exists a real number $\lambda>0$ 
and a Borel probability measure $\nu\in \mathcal{M}(\Omega)$  
such that $\mathscr{L}_{f}^{*}\nu=\lambda\nu$.
\end{lema}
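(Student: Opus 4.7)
The plan is a standard Schauder--Tychonoff fixed-point argument applied to a normalized adjoint. Let $\mathcal{M}(\Omega)$ denote the set of Borel probability measures on $\Omega$, equipped with the weak-$*$ topology inherited from $C(\Omega)^{*}$. Since $\Omega$ is compact metric and $C(\Omega)$ is separable, $\mathcal{M}(\Omega)$ is a nonempty, convex, weak-$*$ compact subset of the unit ball of $C(\Omega)^{*}$ (by Banach--Alaoglu and Riesz--Markov). This is the set on which the fixed-point machinery will be run.

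Define the map $T:\mathcal{M}(\Omega)\to\mathcal{M}(\Omega)$ by
\[
T(\nu)
=
\frac{\mathscr{L}_f^{*}\nu}{\int_{\Omega}\mathscr{L}_f(1)\,d\nu}.
\]
The denominator is strictly positive: since $f\in C(\Omega)$ is continuous on a compact space it is bounded, hence $\mathscr{L}_f(1)(x)=\int_M e^{f(ax)}d\mu(a)\geq e^{-\|f\|_0}>0$ for every $x$, using that $\mu$ is a probability measure on $M$. Thus $T$ is well defined, and by construction $T(\nu)$ is again a Borel probability measure on $\Omega$. Observe that $\nu$ is a fixed point of $T$ if and only if $\mathscr{L}_f^{*}\nu=\lambda\nu$ with $\lambda=\int \mathscr{L}_f(1)\,d\nu>0$, which is exactly the conclusion.

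It remains to check the continuity and the compactness hypotheses of Schauder--Tychonoff. First, $\mathscr{L}_f:C(\Omega)\to C(\Omega)$ is bounded, so its adjoint $\mathscr{L}_f^{*}$ is continuous in the weak-$*$ topology: for any $\varphi\in C(\Omega)$, the evaluation $\nu\mapsto (\mathscr{L}_f^{*}\nu)(\varphi)=\nu(\mathscr{L}_f\varphi)$ is just composition of a weak-$*$ continuous functional with the fixed element $\mathscr{L}_f\varphi\in C(\Omega)$. Second, the normalizing scalar $\nu\mapsto \int \mathscr{L}_f(1)\,d\nu$ is weak-$*$ continuous (same reason) and bounded below away from zero by $e^{-\|f\|_0}$ on all of $\mathcal{M}(\Omega)$, so $T$ is a continuous self-map of the convex compact set $\mathcal{M}(\Omega)$.

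Applying the Schauder--Tychonoff fixed point theorem to $T$ yields $\nu\in\mathcal{M}(\Omega)$ with $T(\nu)=\nu$, and setting $\lambda=\int_{\Omega}\mathscr{L}_f(1)\,d\nu>0$ gives $\mathscr{L}_f^{*}\nu=\lambda\nu$ as required. The main subtlety, and the step to be careful about, is verifying that everything takes place within the locally convex topology in which $\mathcal{M}(\Omega)$ is compact (weak-$*$), rather than in a norm topology; once that is fixed, continuity of $T$ and positivity of the denominator are both immediate consequences of the compactness of $\Omega$ and the continuity of $f$.
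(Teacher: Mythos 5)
Your argument is the same as the paper's: you apply the Schauder--Tychonoff fixed point theorem on $\mathcal{M}(\Omega)$ with the weak-$*$ topology to the normalized adjoint $\nu\mapsto \mathscr{L}_f^{*}\nu/(\mathscr{L}_f^{*}\nu)(1)$, noting that $(\mathscr{L}_f^{*}\nu)(1)=\int\mathscr{L}_f(1)\,d\nu$. You just supply more of the routine verification (compactness of $\mathcal{M}(\Omega)$, weak-$*$ continuity of $\mathscr{L}_f^{*}$, and the uniform lower bound on the normalizer) that the paper leaves implicit.
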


\begin{proof}
The  mapping 
$
\gamma 
\mapsto 
\mathscr{L}_{f}^{*}\gamma/(\mathscr{L}_{f}^{*}\gamma)(1)
$ 
define a continuous function from $\mathcal{M}(\Omega)$ 
to itself. The Schauder-Tychonoff fixed point theorem 
ensures the existence of a fixed point $\nu$ for this mapping.  
By taking $\lambda=(\mathscr{L}_{f}^{*}\nu)(1)$ the 
theorem follows.
\end{proof}

We are now able to prove the main
theorem of this section which is the 
Ruelle Theorem for Walters potentials
defined over an infinite cartesian 
product of general metric compact spaces.

\begin{teo}\label{Ruelle-Walters}
	Let $f$ be a potential satisfying the weak Walters condition  and consider the Ruelle operator 
	$\mathscr{L}_{f}:C(\Omega)\to C(\Omega)$ 
	associated to $f$.  
	Then there are a real number
	$
	\lambda_{f} >0,
	$ 
	a strictly positive continuous function $h_{f}$
	and a {\bf unique} Borel probability measure $\nu_{f}$ 
	such that 
	\begin{itemize}
		\item[i)] 
		$
		\mathscr{L}_{f} h_f
		=
		\lambda_{f} h_{f}~$, 
		$\mathscr{L}_{f} ^{*}\nu_{f} 
		=
		\lambda_{f} \nu_{f} 
		$.
		
		\item[ii)] 
		For any $\varphi\in C(\Omega)$ we have 
		\[
		\left\| 
		\lambda_{f} ^{-n}
		\mathscr{L}^n_{f}\varphi-h_{f} 
		\int_{\Omega} \varphi\, d\nu_{f} 
		\right\|_0
		\longrightarrow
		0,
		\qquad
		\text{when}\ n\to\infty.
		\]
		
	\end{itemize}
\end{teo}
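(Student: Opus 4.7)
The plan is to reduce the general case to Theorem \ref{pre-walters} by a normalization argument à la Walters. First, apply Lemma \ref{lema-auto-medida-L-star} to produce $\lambda_f>0$ and a Borel probability $\nu_f$ with $\mathscr{L}_f^{*}\nu_f=\lambda_f\nu_f$, and set $\tilde f=f-\log\lambda_f$, so that $\mathscr{L}_{\tilde f}^{*}\nu_f=\nu_f$. The goal will be to produce a strictly positive continuous eigenfunction $h_f$ of $\mathscr{L}_{\tilde f}$ at eigenvalue $1$, then conjugate $\mathscr{L}_{\tilde f}$ into a normalized operator to which Theorem \ref{pre-walters} applies.

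Step 2 is to establish that $\{\mathscr{L}_{\tilde f}^{n}1\}_{n\geq 0}$ is uniformly bounded and equicontinuous. Using the weak Walters control of $\tilde f$ and the elementary inequality $|e^{a}-e^{b}|\leq e^{a}(e^{|a-b|}-1)$ inside the integral defining $\mathscr{L}_{\tilde f}^{n}1$, one obtains
\[
|\mathscr{L}_{\tilde f}^{n}1(x)-\mathscr{L}_{\tilde f}^{n}1(y)|\leq (e^{C_{\tilde f}(x,y)}-1)\,\mathscr{L}_{\tilde f}^{n}1(x).
\]
This controls ratios; combined with a chaining argument via compactness of $\Omega$ and the normalization $\int \mathscr{L}_{\tilde f}^{n}1\,d\nu_f=1$, it gives a uniform upper and lower bound on $\mathscr{L}_{\tilde f}^{n}1$, which then converts the ratio estimate into genuine equicontinuity. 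Passing to the Cesàro averages $h_{n}=n^{-1}\sum_{k=0}^{n-1}\mathscr{L}_{\tilde f}^{k}1$, the same bounds persist, and Arzelà--Ascoli yields a subsequential uniform limit $h_f\in C(\Omega)$; the telescoping identity $\mathscr{L}_{\tilde f}h_{n}-h_{n}=n^{-1}(\mathscr{L}_{\tilde f}^{n}1-1)\to 0$ uniformly shows $\mathscr{L}_{\tilde f}h_f=h_f$, i.e.\ $\mathscr{L}_f h_f=\lambda_f h_f$. Strict positivity of $h_f$ is then extracted from Lemma \ref{cota-inf-sn-a}: the lower bound on $S_N\tilde f(w)$ for some $w\in\sigma^{-N}x\cap B(y,\epsilon)$ yields a uniform lower bound $\mathscr{L}_{\tilde f}^{n}1\geq c>0$ for all large $n$, and therefore $h_f\geq c>0$.

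Once $h_f$ is in hand, normalize it so that $\int h_f\,d\nu_f=1$ and define the conjugate density $g=e^{\tilde f}\,(h_f\circ\mathrm{id})/(h_f\circ\sigma)$. The eigenfunction equation $\mathscr{L}_{\tilde f}h_f=h_f$ becomes exactly $\int_{M}g(ax)\,d\mu(a)=1$, so $g\in G_{0}(\Omega)$. Further, $\log g=\tilde f+\log h_f-\log h_f\circ\sigma$; the coboundary $\log h_f-\log h_f\circ\sigma$ satisfies the weak Walters condition since $S_{n}(\log h_f-\log h_f\circ\sigma)(\mathbf{a}x)=\log h_f(\mathbf{a}x)-\log h_f(x)$, whose variation in $x$ is controlled uniformly in $\mathbf{a}\in M^{n}$ by the modulus of continuity of $\log h_f$. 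Since the weak Walters condition is preserved under sums and $\tilde f$ is weak Walters by hypothesis, so is $\log g$, and Theorem \ref{pre-walters} applies to $g$, producing a unique $\bar\nu$ with $\mathscr{L}_g^{*}\bar\nu=\bar\nu$ and $\mathscr{L}_g^{n}\varphi\to\int\varphi\,d\bar\nu$ uniformly.

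Finally, a direct computation gives the conjugation identity $h_f\,\mathscr{L}_g(\varphi)=\mathscr{L}_{\tilde f}(h_f\varphi)$, hence $\mathscr{L}_{\tilde f}^{n}\psi=h_f\,\mathscr{L}_g^{n}(\psi/h_f)$ for every $\psi\in C(\Omega)$; since $h_f$ is bounded away from zero, $\psi/h_f\in C(\Omega)$, and the convergence of Theorem \ref{pre-walters} transfers to $\lambda_f^{-n}\mathscr{L}_f^{n}\psi\to h_f\int(\psi/h_f)\,d\bar\nu$. A short direct check that $h_f\,d\nu_f$ is $\mathscr{L}_g^{*}$-invariant, combined with the uniqueness part of Theorem \ref{pre-walters}, identifies $d\bar\nu=h_f\,d\nu_f$; the limit then reads $h_f\int\psi\,d\nu_f$, proving (ii). For uniqueness of $\nu_f$, any other $\nu'$ with $\mathscr{L}_f^{*}\nu'=\lambda_f\nu'$ satisfies $\int\psi\,d\nu'=\int\mathscr{L}_{\tilde f}^{n}\psi\,d\nu'\to (\int h_f\,d\nu')(\int\psi\,d\nu_f)$; taking $\psi\equiv 1$ forces $\int h_f\,d\nu'=1$, and then $\nu'=\nu_f$. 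The main obstacle I anticipate is the coupled Step~2/Step~4 argument: turning the ratio-only estimate from the weak Walters condition into absolute bounds, which requires feeding the $\nu_f$-normalization and Lemma \ref{cota-inf-sn-a} into the chaining/equicontinuity machinery simultaneously with the Cesàro extraction.
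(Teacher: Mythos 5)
Your proposal follows essentially the same strategy as the paper: obtain $\lambda_f,\nu_f$ from Lemma \ref{lema-auto-medida-L-star}, construct a strictly positive continuous eigenfunction $h_f$ for $\mathscr{L}_{\tilde f}$, normalize to $g=e^{\tilde f}h_f/(h_f\circ\sigma)\in G_0(\Omega)$ with $\log g$ weak Walters, invoke Theorem \ref{pre-walters}, and transfer the convergence by the conjugation identity. The one place you genuinely diverge is the construction of $h_f$: the paper introduces the compact convex set $\Lambda=\{\varphi\geq 0:\nu_f(\varphi)=1,\ \varphi(x)\leq e^{C_f(x,y)}\varphi(y)\ \text{for}\ d_\Omega(x,y)<\epsilon_0\}$, proves it is bounded, equicontinuous and invariant under $\lambda_f^{-1}\mathscr{L}_f$, and then appeals to Schauder--Tychonoff; you instead extract $h_f$ as a subsequential uniform limit of Ces\`aro averages of $\mathscr{L}_{\tilde f}^n 1$ via Arzel\`a--Ascoli and the telescoping identity. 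Since the iterates $\mathscr{L}_{\tilde f}^n 1$ lie in $\Lambda$ (by the paper's Claim 2), the two routes require the same analytic content (uniform bound, equicontinuity, strict positivity) and your version is a slightly more constructive packaging of the same estimates.

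One part of your outline is imprecise and would break down in exactly the setting the paper cares about in Section 7. You claim the uniform upper bound on $\mathscr{L}_{\tilde f}^n 1$ follows from the ratio estimate ``combined with a chaining argument via compactness of $\Omega$ and the normalization $\int\mathscr{L}_{\tilde f}^n 1\,d\nu_f=1$.'' The ratio estimate controls $\mathscr{L}_{\tilde f}^n 1(y)/\mathscr{L}_{\tilde f}^n 1(z)$ only when $d_\Omega(y,z)$ is small, and $\Omega=M^{\mathbb N}$ is totally disconnected whenever $M$ is (e.g.\ $M=\{-1,1\}$), so there is no topological chain of nearby points linking arbitrary $y$ and $z$. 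To get a uniform upper bound you must bridge distant points, and there are two honest ways to do it: either (a) integrate the ratio inequality over a ball $B(y,\epsilon_0)$ against $\nu_f$, which requires knowing $\nu_f(B(y,\epsilon_0))$ is uniformly bounded below, i.e.\ that $\nu_f$ has full support (true, but it is a separate lemma which you neither state nor prove, deduced from $\mathscr{L}_f^{*}\nu_f=\lambda_f\nu_f$ and the full support of $\mu$); or (b) use Lemma \ref{cota-inf-sn-a} to lower-bound $\mathscr{L}_{\tilde f}^{N+n}1(x)$ by $c\,\mathscr{L}_{\tilde f}^n 1(z)$ for all $x,z$ and then integrate in $x$ against $\nu_f$. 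The paper takes route (b) in the proof of Claim 1. You only invoke Lemma \ref{cota-inf-sn-a} for strict positivity of $h_f$; you need it (or the full-support lemma for $\nu_f$) already at the boundedness step, and your closing remark shows you sensed exactly this coupling. With that gap filled, the rest of your argument --- preservation of the weak Walters condition under adding the coboundary $\log h_f-\log h_f\circ\sigma$, the conjugation $\lambda_f^{-n}\mathscr{L}_f^n\psi=h_f\,\mathscr{L}_{\log g}^n(\psi/h_f)$, the identification $d\bar\nu=h_f\,d\nu_f$ via the uniqueness in Theorem \ref{pre-walters}, and the uniqueness of $\nu_f$ --- is correct and matches the paper.
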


\begin{proof}
The proof will be divided into three steps.
\\
\noindent {\bf Claim 1.} Let $\nu$ and $\lambda$ as given
by the Lemma \ref{lema-auto-medida-L-star}. Then 
for any $f$ satisfying the weak Walters condition and $\epsilon_0>0$ the set  
$$
\Lambda
=
\{ 
\varphi\in C(\Omega) : 
\varphi\geq 0,\ 
\nu(\varphi)=1
~~\textnormal{and}~~
\varphi(x)
\leq 
\exp{(C_{f}(x,y))}\varphi(y)
~~\textnormal{if}~
d_{\Omega}(x,y)
<\epsilon_0
\}
$$
is convex, closed, bounded and uniformly equicontinuous. 

Let us first prove that $\Lambda$ is not empty. 
Indeed, for any $x,y\in\Omega$ we have
\begin{multline*}
\mathscr{L}_{f}1(x)
=
\int_{M}e^{f(ax)}\, d\mu(a)
=
\int_{M} e^{f(ay)}e^{f(ax)-f(ay)}\, d\mu(a)
\\
\leq 
\exp{ \left( \sup_{a\in M}  f(ax)-f(ay) \right) }
\int_{M} e^{f(ay)}\, d\mu(a)
\leq 
\exp{(C_{f}(x,y))} \mathscr{L}_{f}1(y).
\end{multline*}
The set $\Lambda$ is clearly closed and convex.  
Now we shall prove that $\Lambda$ is a bounded set. 
Let $x,y\in \Omega$. 
By the Lemma \ref{cota-inf-sn-a} 
given $\epsilon>0$ and $a\in \mathbb{R}$
there is $N\in\mathbb{N}$ and $y_0=\ba_0 x$, 
where $\ba_{0} = a_1\ldots a_{N}$, 
such that $d_{\Omega}(y_0,y)<\epsilon$ and  $S_Nf(y_0)\geq a$. 
Given $\delta_1>0$ it follows from the continuity of $\varphi$ that
we can choose $\delta>0$ such that for any point $\ba$ in the closed 
ball $B[\ba_{0}, \delta]\subset M^N$ we have $S_Nf(\ba x)\geq a-\delta_1$.  In 
particular, we can choose $\delta$  such that $B[y_0,\delta]\subset B[y,\epsilon]$.
Therefore it follows from the definition of the Ruelle operator and 
the choice of $\delta$ that 
\begin{align*}
\mathscr{L}^N\varphi(x)
&=
\int_{M^N}e^{S_Nf(\ba x)}
\varphi(\ba  x) 
\prod_{i=1}^{N} d\mu(a_i)
\\
&=
\int_{M^N\setminus B[\ba_{0}, \delta]}
e^{S_Nf(\ba x)}\varphi(\ba  x)
\prod_{i=1}^{N} d\mu(a_i)
+
\int_{ B[\ba_{0}, \delta]}
e^{S_Nf(\ba x)}\varphi(\ba  x)
\prod_{i=1}^{N} d\mu(a_i)
\\
&\geq
\int_{ B[\ba_{0}, \delta]}e^{S_Nf(\ba x)}
\varphi(\ba  x)
\prod_{i=1}^{N} d\mu(a_i)
\\[0.2cm]
&\geq
\mu\times\ldots\times\mu (B[\ba_{0}, \delta])
e^{a-\delta_1}\varphi(w_0),
\end{align*}
where $w_0$ minimizes the function 
$(a_1,\ldots, a_N) \mapsto \varphi(a_1\ldots a_N x)$ in $B[y,\delta]$.
Now observe that $w_0\in B[y,\epsilon]$, 
by using the compactness of $\Omega$ and 
the definition of $\Lambda$
we get for all $x,y\in \Omega$ the following inequality 
$
\varphi(y)
\leq 
Const\cdot  
e^{\delta_1-a}\mathscr{L}^N\varphi(x).
$
Recalling that $\mathscr{L}_{f}^{*}\nu=\lambda\nu$
and $\nu(\varphi)=1$, 
we obtain by integrating both sides of the previous 
inequality that
$
\varphi(y)
\leq 
Const\cdot  e^{C+\delta_1-a}
\nu(\mathscr{L}^N\varphi(x))
= 
Const\cdot 
e^{C+\delta_1-a}\lambda^N.
$ Hence $\Lambda$ is bounded.
The uniform equicontinuity of $\Lambda$ 
is proved as in \cite[p. 129]{PW78} mutatis mutandis.

\medskip

\noindent {\bf Claim 2.} 
The operator $\lambda^{-1}\mathscr{L}_{f}$ maps $\Lambda$ into $\Lambda$.

Let $\varphi\in \Lambda$ and $x,y \in \Omega$ 
with $d_{\Omega}(x,y)<\epsilon_0$. Then 
\begin{align*}
\dfrac{1}{\lambda} \mathscr{L}\varphi(x)
&=
\dfrac{1}{\lambda}
\int_{M}
e^{f(ax)}\varphi(ax)\,
d\mu(a)
\\[0.2cm]
&\leq 
\dfrac{1}{\lambda} 
\int_{M} 
e^{f(ay)}\varphi(ay)
\underbrace{(e^{f(ax)-f(ay)}e^{C_{f}(ax,ay)})}_{\leq  e^{C_{f}(x,y)}}\,
d\mu(a)
\\[0.2cm]
&\leq
\dfrac{1}{\lambda}
\int_{M} 
e^{f(ay)}\varphi(ay) e^{C_{f}(x,y)}\,
d\mu(a)
\\[0.2cm]
&=
\dfrac{1}{\lambda}\mathscr{L}_{f}\varphi(y) 
e^{C_{f}(x,y)}
\end{align*}
where the inequality $e^{f(ax)-f(ay)}e^{C_{f}(ax,ay)}\leq e^{C_{f}(x,y)}$ 
is justified by observing that $C_f(ax,ay)$  is equal to 
$$
\sup_{n\geq 1}
\sup_{\ba \in M^n}
\{
	(f(\ba ax)-f(\ba a y))
	+(f(\sigma \ba x)-f(\sigma \ba a y))
	+\cdots 
	+(f(\sigma^{n-1}\ba a x)-f(\sigma^{n-1}\ba a y))
\}.
$$
From this is clearly that $C_{f}(ax,ay)+(f(ax)-f(ay))\leq C_f(x,y).$
\medskip

\noindent {\bf Claim 3.}  
If $g=e^{f}h/(\lambda h\circ T)$, then $g\in G_0(\Omega)$  
and $D^{\star}_g(x,y)\to 0$ when $d_{\Omega}(x,y)\to 0$.
The proof is similar to the one given by \cite[p. 130]{PW78}.

\medskip

The Claims  1 and  2 allow us to use the Schauder-Tychonoff fixed point theorem 
to obtain a fixed point  $h\in \Lambda$ for the operator $\lambda^{-1}\mathscr{L}$.
This fixed point $h$ satisfies $\mathscr{L}h=\lambda h$, $\nu(h)=1$ and 
$h(x)\leq e^{C_{f}(x,y)}h(y)$ whenever $d_{\Omega}(x,y)<\epsilon_0$.
We shall show that  $h>0$. 
Suppose that there exist some $x\in\Omega$ such that $h(x)=0$. 
For all $n\in\mathbb{N}$ we have $\mathscr{L}^nh(x)=\lambda^nh(x)=0$, 
so  $h$ must be $0$ on the set $\{\sigma^{-n}x, n\in \mathbb{N}\}$, 
which is dense, using that $\mu$ has full support we have $h\equiv 0$ 
contradicting the fact that $\nu(h)=1$.
\medskip

By the Theorem [\ref{pre-walters}] we have that  
$
\mathscr{L}^n_{\log g} \varphi
\stackrel{|\cdot|_0}{\longrightarrow}
\mu(\varphi)
$ for all  $\varphi\in C^{0}(\Omega)$, 
where $\mu\in \mathcal{M}(\Omega)$ is the fixed point 
of $\mathscr{L}^*_{\log g}$ in $\mathcal{M}(\Omega)$.
On the other hand  
$$
\dfrac{1}{\lambda^n} \mathscr{L}^n_{f}\varphi(x)
= 
h(x)(\mathscr{L}^n_{\log g} (\varphi/h)(x))
$$
then it follows that  
$
1/\lambda^{n}  
\mathscr{L}^n_{f}\varphi 
\stackrel{|\cdot|_0}{\longrightarrow} 
h\cdot \mu(\varphi/h)
$. 
We shall show that 
$\mu(\varphi/h)=\nu(\varphi)$. 
Let be $m\in \mathcal{M}(\Omega)$  
defined by $m(\varphi)=\nu(h\varphi)$. 
Then
$$
m(\mathscr{L}_{\log g}\varphi)
=
\nu(h\cdot \mathscr{L}_{\log g}\varphi )
=
\dfrac{1}{\lambda}=\nu(\mathscr{L}_{f}(\varphi\cdot h)
=
m(\varphi).
$$
\end{proof}

\section{Spectral Gap and Analyticity of the Pressure}

We mean by ``presence of the spectral gap" in the Ruelle
operator the existence of a single isolated eigenvalue of maximum 
modulus. 
The presence of the spectral gap in the Ruelle operator is the key property 
to prove analyticity of the pressure and 
also implies the exponential decay of correlations with 
respect to the Gibbs measures. These are classical results 
and very well known for H\"older potentials when the state space $M$ 
is finite. We shall here analyze the generalizations
of this results in the sense of the state space $M$ 
and the regularity of the potential.

The main difference between the Ruelle Theorem operator
for $f\in W(\Omega)$ and 
for $f\in C^{\gamma}(\Omega)$ is 
the fact that in the first case we do not have much 
information about the spectrum
of $\mathscr{L}_f$. 
In particular, it seems a hard problem to 
decide whether we have  
presence or absence of the spectral gap for
the Ruelle operator $\mathscr{L}_f$, which 
is crucial property to get deep understanding of the associated 
Gibbs measure.

\begin{teo}\label{Analiticadade Pressao}
Let $\mathcal{K}\subset C(\Omega)$ be an invariant subspace of $C(\Omega)$.  
A sufficient condition for the analyticity of the pressure is the analyticity of the map 
$\mathcal{K} \ni f \mapsto \mathscr{L}_{f} \in \mathcal{K}$
and the presence of the spectral gap in the 
spectrum of the Ruelle operator.
\end{teo}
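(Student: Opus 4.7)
The key identity is $P(f)=\log\lambda_f$, proved in Proposition \ref{Principio variacional}, so the task reduces to showing that $f\mapsto \lambda_f$ is analytic on a neighborhood of each $f_0\in\mathcal{K}$. The strategy is standard Kato-type analytic perturbation theory: combine the hypothesis that $f\mapsto \mathscr{L}_f$ is analytic into $L(\mathcal{K},\mathcal{K})$ with the isolated-simple-eigenvalue assumption, then recover $\lambda_g$ for $g$ near $f_0$ by a contour integral formula.

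First I would fix $f_0\in\mathcal{K}$ and use the spectral gap to select a simple closed curve $\Gamma\subset\mathbb{C}$ enclosing $\lambda_{f_0}$ and separating it from the remainder of $\sigma(\mathscr{L}_{f_0})$, which lies strictly inside a disk of smaller radius by the gap hypothesis. On $\Gamma$ the resolvent $(zI-\mathscr{L}_{f_0})^{-1}$ is a bounded operator depending continuously on $z$, and by the Neumann-series expansion of the resolvent the map
\[
(g,z)\longmapsto (zI-\mathscr{L}_g)^{-1}
\]
is jointly analytic on a neighborhood of $\{f_0\}\times\Gamma$, because $g\mapsto \mathscr{L}_g$ is analytic by hypothesis and inversion of bounded operators is analytic on the open set of invertibles. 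Hence the spectral projection
\[
P_g \;=\; \frac{1}{2\pi i}\oint_{\Gamma}(zI-\mathscr{L}_g)^{-1}\,dz
\]
defines an analytic map $g\mapsto P_g$ from a neighborhood $U$ of $f_0$ in $\mathcal{K}$ into $L(\mathcal{K},\mathcal{K})$.

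Second, I would argue that, shrinking $U$ if necessary, $P_g$ has rank one and $\mathscr{L}_g$ has a unique eigenvalue $\lambda_g$ enclosed by $\Gamma$. The rank of $P_g$ is lower-semicontinuous and takes integer values, so it remains equal to $\mathrm{rank}(P_{f_0})=1$ on a small enough neighborhood. On the finite-dimensional invariant subspace $\mathrm{range}(P_g)$, the restriction $\mathscr{L}_g\big|_{\mathrm{range}(P_g)}$ is multiplication by the enclosed eigenvalue, and $P_g\mathscr{L}_g P_g=\lambda_g P_g$. Taking the finite-rank trace gives the clean formula
\[
\lambda_g \;=\; \mathrm{tr}\bigl(\mathscr{L}_g P_g\bigr),
\]
which is a composition of analytic maps (the product of analytic operator-valued maps followed by the trace, which is a continuous linear functional on the finite-rank operators appearing here). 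Since $\lambda_g>0$ by Ruelle--Perron--Fr\"obenius and $\log$ is analytic on $(0,\infty)$, it follows that $P(g)=\log\lambda_g$ is analytic on $U$, and since $f_0$ was arbitrary the pressure is analytic on $\mathcal{K}$.

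The main obstacle in making this rigorous is the persistence of the spectral gap and of the rank-one property of $P_g$ under perturbation: both require quantitative control showing that $\mathscr{L}_g$ is close enough to $\mathscr{L}_{f_0}$ in operator norm for the resolvent on $\Gamma$ to remain uniformly bounded, and that no additional spectrum can creep inside $\Gamma$. This is handled by the continuity of $g\mapsto \mathscr{L}_g$ (which follows from its analyticity) together with the uniform invertibility of $zI-\mathscr{L}_{f_0}$ on the compact curve $\Gamma$, a standard argument that I would carry out via a Neumann series estimate. Once this uniform control is established, the rest of the argument is formal.
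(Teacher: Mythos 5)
Your approach is essentially the same as the paper's: both rest on the Kato-type perturbation argument via the contour-integral spectral projector $P_g=\frac{1}{2\pi i}\oint_\Gamma(zI-\mathscr{L}_g)^{-1}\,dz$, the persistence of the spectral gap and of $\mathrm{rank}(P_g)=1$ under small perturbations, and the identity $P(f)=\log\lambda_f$ from Proposition \ref{Principio variacional}. This is precisely the content of Lemma \ref{Lema tecnico} in the paper.

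The one place where your route diverges, and where you should tighten the argument, is the final extraction of $\lambda_g$. You write $\lambda_g=\mathrm{tr}(\mathscr{L}_g P_g)$ and assert that this is analytic because the trace is ``a continuous linear functional on the finite-rank operators appearing here.'' The formula itself is correct (since $\mathscr{L}_g P_g=\lambda_g P_g$ and $\mathrm{tr}(P_g)=1$), but the analyticity claim is not immediate: the trace is not a bounded linear functional on $L(\mathcal{K},\mathcal{K})$, and the rank-$\le 1$ operators do not form a linear subspace, so you cannot simply compose the analytic map $g\mapsto\mathscr{L}_g P_g$ with a bounded linear functional. The paper avoids this by fixing a test vector $v\in X$ with $P_{f_0}v\neq 0$ and a functional $w\in X^{*}$ (via Hahn--Banach) with $\langle w,P_{f_0}v\rangle\neq 0$, and writing
\[
\lambda(L)=\frac{\langle w,\pi_L(Lv)\rangle}{\langle w,\pi_L(v)\rangle},
\]
which is manifestly a composition of analytic maps with a quotient of non-vanishing analytic scalar functions. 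Your trace formula can be reduced to exactly this form (the trace of the rank-one operator $\lambda_g P_g$ equals $\langle w,\lambda_g P_g v\rangle/\langle w,P_g v\rangle$), and once you make that reduction explicit your proof is complete and matches the paper's. So: same skeleton, minor gap in justifying the analyticity of the trace expression, easily repaired by switching to the ratio-of-pairings form.
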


In order to prove the Theorem \ref{Analiticadade Pressao} 
we shall use the following lemma, which seems to be  
a well known fact on the community. 
We decided to give a proof for this lemma 
either to keep the text as self contained as possible 
and because we were not able to find its reference.

In this paper we use the expression  
\emph{simple eigenvalue} to refer to an eigenvalue $\lambda$ 
of an operator $T:X\to X$ such that the image of the spectral projector 
$
\pi_L=\int_{\partial D}(\lambda I-L)^{-1}d\lambda
$
is an uni-dimensional subspace of $X$.

\begin{lema}\label{Lema tecnico}
Let $T:X\to X$  be  a bounded linear operator possessing an 
isolated simple eigenvalue $\lambda\in \mathbb{C}$.
Let $D$ be a closed disc centered at $\lambda$ such that 
$D \cap \mathrm{spec}(T)=\{\lambda\}$.   
Then there is a neighborhood $U$ of $T$ in $L(X,X)$ 
so that the mapping
$U\ni L\mapsto \lambda(L)\in \mathbb{C}$, 
where $\lambda(L)$ 
is the unique point in $\mathrm{spec}(L)\cap \mathrm{int}(D)$
is well defined 
and moreover this mapping is analytic.

\end{lema}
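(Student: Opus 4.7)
\noindent\textbf{Proof plan for Lemma \ref{Lema tecnico}.}

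The strategy is to use the Riesz (spectral) projector associated with $\lambda$ and the holomorphic functional calculus applied to the resolvent. All the analyticity will ultimately be extracted from the Neumann-series expansion of the resolvent.

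First I would show that $\partial D\subset \rho(L)$ (the resolvent set of $L$) for every $L$ in a sufficiently small neighborhood $U$ of $T$. Since $\partial D$ is compact and disjoint from $\mathrm{spec}(T)$, the function $\zeta\mapsto \|(\zeta I-T)^{-1}\|$ is bounded on $\partial D$ by some constant $K$. Using the second resolvent identity I would write, for $\|L-T\|<1/K$,
\begin{equation*}
(\zeta I-L)^{-1}=\sum_{n=0}^{\infty}\bigl[(\zeta I-T)^{-1}(L-T)\bigr]^{n}(\zeta I-T)^{-1},
\end{equation*}
and observe that this series converges in $L(X,X)$ uniformly for $\zeta\in\partial D$. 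This simultaneously establishes that $\partial D\subset\rho(L)$ and that $L\mapsto (\zeta I-L)^{-1}$ is Fr\'echet-analytic in $L$, locally uniformly in $\zeta\in\partial D$.

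Next I would define the Riesz projector
\begin{equation*}
\pi_L=\frac{1}{2\pi i}\int_{\partial D}(\zeta I-L)^{-1}\, d\zeta,
\end{equation*}
and note that, by integrating the Neumann series above term by term, $L\mapsto \pi_L$ is analytic from $U$ to $L(X,X)$; in particular it is norm continuous, and $\pi_L\to\pi_T$ as $L\to T$. Since $\pi_T$ has rank one (by the assumption that $\lambda$ is a simple eigenvalue) and the rank of a projector is locally constant under small norm perturbations (two projectors $P,Q$ with $\|P-Q\|<1$ are similar, hence equirank), shrinking $U$ if necessary guarantees that $\pi_L$ has rank one for every $L\in U$, and that $\mathrm{spec}(L)\cap\mathrm{int}(D)$ consists of the single eigenvalue $\lambda(L)$ with associated one-dimensional eigenspace equal to $\mathrm{Ran}(\pi_L)$.

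Finally I would extract $\lambda(L)$ as a ratio of two analytic scalar quantities. Fix any $x_{0}\in X$ and $\phi\in X^{*}$ with $\phi(\pi_{T}x_{0})\neq 0$; by continuity of $L\mapsto\pi_L$ we may shrink $U$ so that $\phi(\pi_{L}x_{0})\neq 0$ throughout $U$. Since $\mathrm{Ran}(\pi_L)$ is one-dimensional and $L$-invariant with eigenvalue $\lambda(L)$, we have $L\pi_L=\lambda(L)\pi_L$, or equivalently
\begin{equation*}
\lambda(L)\,\pi_L=\frac{1}{2\pi i}\int_{\partial D}\zeta\,(\zeta I-L)^{-1}d\zeta,
\end{equation*}
so that
\begin{equation*}
\lambda(L)=\frac{\displaystyle \phi\!\left(\frac{1}{2\pi i}\int_{\partial D}\zeta\,(\zeta I-L)^{-1}x_{0}\,d\zeta\right)}{\phi(\pi_{L}x_{0})}.
\end{equation*}
Both numerator and denominator are analytic in $L$ (by the same Neumann-series argument as above, using that continuous linear functionals preserve analyticity), and the denominator does not vanish on $U$, so the quotient is analytic on $U$.

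The only genuinely delicate step is the rank-constancy of $\pi_L$; everything else follows mechanically from the analyticity of the resolvent in operator norm. Once rank one is secured, the quotient formula above packages $\lambda(L)$ as an analytic function of $L$ with no further effort.
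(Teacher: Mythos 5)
Your proof follows essentially the same route as the paper's: construct the Riesz spectral projector $\pi_L$ along $\partial D$, invoke rank stability of projectors under small norm perturbations to obtain a unique simple eigenvalue in $\mathrm{int}(D)$, and then extract $\lambda(L)$ as the analytic scalar ratio $\phi(L\pi_L x_0)/\phi(\pi_L x_0)$ (using the identity $\frac{1}{2\pi i}\int_{\partial D}\zeta(\zeta I-L)^{-1}d\zeta = L\pi_L$). One small improvement: you establish $\partial D\subset\rho(L)$ directly via the Neumann series for the resolvent rather than by the compactness-and-contradiction argument the paper uses, which has the added benefit of making the source of the analyticity of $L\mapsto(\zeta I-L)^{-1}$, and hence of $L\mapsto\pi_L$, completely explicit.
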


\begin{corollary}
For any fixed $0< \gamma \leq 1$ both mappings
$$
C^{\gamma}(\Omega)\ni f\mapsto h_f\in C^{\gamma}(\Omega)
\qquad \text{and} \qquad
C^{\gamma}(\Omega)\ni f\mapsto \nu_f\in (C^{\gamma}(\Omega))^{*}
$$
are analytic.
\end{corollary}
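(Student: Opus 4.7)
The plan is to realize both $h_f$ and $\nu_f$ as values of suitable spectral projectors obtained via the Riesz holomorphic functional calculus, and then invoke the analytic dependence of $f\mapsto \mathscr{L}_f$ and $f\mapsto \mathscr{L}_f^{*}$ established in the corollary to Proposition~\ref{An.Op.Ru}.

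Fix $f_0 \in C^{\gamma}(\Omega)$. By Theorem~\ref{Ruelle-Perron}, $\lambda_{f_0}$ is a simple isolated eigenvalue of $\mathscr{L}_{f_0}$. Choose a closed disc $D$ centered at $\lambda_{f_0}$ such that $D \cap \mathrm{spec}(\mathscr{L}_{f_0}) = \{\lambda_{f_0}\}$. Just as in Lemma~\ref{Lema tecnico}, on a small enough neighborhood $U$ of $f_0$ the circle $\partial D$ lies in the resolvent set of $\mathscr{L}_g$ for all $g \in U$ and encloses only the simple eigenvalue $\lambda_g$, so the Riesz projector
$$
\pi_g \;=\; \frac{1}{2\pi i}\oint_{\partial D}(\zeta I-\mathscr{L}_g)^{-1}\, d\zeta
$$
is a well-defined rank-one spectral projector for every $g \in U$. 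Since $g \mapsto \mathscr{L}_g$ is analytic and the Neumann expansion gives $(\zeta I - \mathscr{L}_g)^{-1}$ as an analytic function of $g$ uniformly in $\zeta \in \partial D$, the contour integral is interchangeable with differentiation in $g$, and $g \mapsto \pi_g \in L(C^{\gamma}(\Omega),C^{\gamma}(\Omega))$ is analytic.

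With the standard normalization ($\nu_g$ a probability and $\nu_g(h_g)=1$), the projector has the explicit form $\pi_g(\varphi)=\nu_g(\varphi)\,h_g$, so in particular $\pi_g(1)=\nu_g(1)\,h_g=h_g$. The evaluation $L \mapsto L(1)$ is a bounded linear map from $L(C^{\gamma}(\Omega),C^{\gamma}(\Omega))$ to $C^{\gamma}(\Omega)$, and composing it with $g \mapsto \pi_g$ yields the analyticity of $g \mapsto h_g$. For the measure, repeat the construction with the adjoint: by the same corollary $g \mapsto \mathscr{L}_g^{*}$ is analytic, so
$$
\pi_g^{*} \;=\; \frac{1}{2\pi i}\oint_{\partial D}(\zeta I-\mathscr{L}_g^{*})^{-1}\, d\zeta
$$
is an analytic function of $g$ with values in $L\big((C^{\gamma}(\Omega))^{*},(C^{\gamma}(\Omega))^{*}\big)$, and a dual pairing computation gives $\pi_g^{*}(\phi^{*})=\phi^{*}(h_g)\,\nu_g$. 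Pick any fixed $\phi_0^{*}\in (C^{\gamma}(\Omega))^{*}$ with $\phi_0^{*}(h_{f_0})\neq 0$ (for instance the evaluation $\varphi \mapsto \varphi(x_0)$, which is continuous on $C^{\gamma}$ and nonzero on the strictly positive eigenfunction $h_{f_0}$); shrinking $U$ so that $\phi_0^{*}(h_g)\neq 0$ for $g \in U$ (possible by the already-proved analyticity of $g\mapsto h_g$ and continuity of $\phi_0^{*}$), the formula
$$
\nu_g \;=\; \frac{\pi_g^{*}(\phi_0^{*})}{\phi_0^{*}(h_g)}
$$
exhibits $\nu_g$ as the ratio of two analytic $(C^{\gamma}(\Omega))^{*}$-valued (respectively scalar-valued) maps with non-vanishing denominator, hence it is itself analytic.

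The main technical point is the uniform validity of the contour: one needs $(\zeta I-\mathscr{L}_g)^{-1}$ to exist and be norm-bounded uniformly in $(\zeta,g)\in \partial D \times U$, so that the Neumann series expansion can be differentiated term by term in $g$ under the integral sign. This uniformity follows from norm-continuity of $g\mapsto \mathscr{L}_g$, compactness of $\partial D$, and the upper semi-continuity of the spectrum under small operator perturbations. Once this is secured, the remaining steps are the functional-calculus identity $\pi_g \varphi = \nu_g(\varphi) h_g$ (and its dual) and routine compositions of analytic maps.
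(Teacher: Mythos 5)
Your proof is correct and follows essentially the same route as the paper: both realize the eigenfunction and eigenmeasure through the rank-one Riesz spectral projector $\pi_g$ (and its adjoint), whose analytic dependence on $g$ follows from the analyticity of $g\mapsto \mathscr{L}_g$ established in the corollary to Proposition~\ref{An.Op.Ru} together with the machinery of Lemma~\ref{Lema tecnico}, and then extract $h_g$ and $\nu_g$ by applying the projector to fixed reference objects and normalizing. The only cosmetic differences are your choice of reference elements ($1$ and an arbitrary nonvanishing $\phi_0^{*}$ instead of the paper's $h_f$ and $\nu_f$), your use of the explicit formula $\pi_g(\varphi)=\nu_g(\varphi)h_g$ rather than just the rank-one property, and the order in which you treat $h_g$ and $\nu_g$.
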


\begin{proof}
Choose $f$ arbitrarily in $C^{\gamma}(\Omega). $ 
Let  $U$ and $\tilde{U}$ be neighborhoods of 
$\mathscr{L}_{f}$ and  
$\mathscr{L}_f^{*}$, respectively 
such that for all $\mathscr{L}_{g}\in U$ and 
$\mathscr{L}^*_{g}\in \tilde{U}$ the images  of the spectral 
projectors  $\pi_{\mathscr{L}_g}$ and $\pi_{\mathscr{L}_g^{*}}$ 
are  the  one-dimensional spaces associated to the eigenvalue $\lambda_g$.
Consider a neighborhood $W\subset C^{\gamma}(\Omega)$ of $f$ 
so that $\mathscr{L}_g \in U$ 
and  $\mathscr{L}_g^{*} \in \tilde{U}$, respectively whenever $g\in W$. 
Therefore we have that  $\pi_{\mathscr{L}_g^{*}}\nu_f=c\cdot \nu_g$ 
then 
$
c
=
c\cdot \int_{\Omega} 1\, d\nu_g 
\equiv 
c\cdot \left<\nu_g,1\right>
=
\left<\pi_{\mathscr{L}_g^{*}}\nu_f,1\right>$ so 
$
\nu_g=\left<\pi_{\mathscr{L}_g^{*}}\nu_f,1\right>^{-1}\cdot \pi_{\mathscr{L}_g^{*}}\nu_f
$
Being the rhs of the last expression a composition of analytic functions 
follows that $g\mapsto \nu_g$ is analytic in a neighborhood of $f$. 
On the other hand,  
$\pi_{\mathscr{L}_g}h_f=C\cdot  h_g$ so 
by a suitable choice of the eigenfunction it follows 
that $C=C\cdot \left<\nu_g,h_g\right>=\left<\nu_g,\pi_{\mathscr{L}_g}h_f\right>$, 
therefore 
$
h_g=\left<\nu_g,\pi_{\mathscr{L}_g}h_f\right>^{-1}\cdot\pi_{\mathscr{L}_g}h_f
$
which is again a composition of analytic functions and so 
$g\mapsto h_g$ is analytic in a neighborhood of $f$. 
\end{proof}

Now we are ready to state and prove the main result of this section.
\begin{teo}\label{Teorema principal}
The function defined by  
$C^{\gamma}(\Omega) \ni f\rightarrow  P(f)\in \mathbb{R}$ is real analytic function.
\end{teo}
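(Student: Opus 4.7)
The plan is to combine three ingredients already established in the excerpt: (a) $P(f) = \log \lambda_f$ from Proposition \ref{Principio variacional}; (b) analyticity of $f \mapsto \mathscr{L}_f$ from the corollary to Proposition \ref{An.Op.Ru}; and (c) analyticity of the distinguished eigenvalue under operator perturbations from Lemma \ref{Lema tecnico}. Together these reduce the statement to a routine composition of analytic maps followed by the scalar logarithm. In effect, what I would write is a concrete instance of Theorem \ref{Analiticadade Pressao} applied with $\mathcal{K} = C^{\gamma}(\Omega)$, whose two hypotheses are exactly (b) and the spectral gap guaranteed by item (i) of Theorem \ref{Ruelle-Perron}.

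Concretely, I would fix an arbitrary $f_0 \in C^{\gamma}(\Omega)$. By Theorem \ref{Ruelle-Perron}(i), the maximal eigenvalue $\lambda_{f_0}$ is simple and isolated, so there is a closed disc $D$ around $\lambda_{f_0}$ whose intersection with $\mathrm{spec}(\mathscr{L}_{f_0})$ is exactly $\{\lambda_{f_0}\}$. Apply Lemma \ref{Lema tecnico} to obtain a neighborhood $U$ of $\mathscr{L}_{f_0}$ in $L(C^{\gamma}(\Omega),C^{\gamma}(\Omega))$ on which $L \mapsto \lambda(L) \in \mathbb{C}$, the unique point of $\mathrm{spec}(L) \cap \mathrm{int}(D)$, is analytic. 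Pulling back along the analytic map $\Theta: g \mapsto \mathscr{L}_g$ (the corollary after Proposition \ref{An.Op.Ru}) produces an open neighborhood $W = \Theta^{-1}(U)$ of $f_0$ on which $g \mapsto \lambda(\mathscr{L}_g)$ is analytic. Finally, since $\lambda_{f_0} > 0$, the scalar logarithm is holomorphic on a neighborhood of $\lambda_{f_0}$ in $\mathbb{C}$, so composing gives the analyticity of $P(g) = \log \lambda(\mathscr{L}_g)$ at $f_0$; arbitrariness of $f_0$ concludes.

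The main obstacle I anticipate is the identification $\lambda(\mathscr{L}_g) = \lambda_g$ for $g$ near $f_0$, i.e.\ ensuring that the eigenvalue tracked by Lemma \ref{Lema tecnico} is genuinely the maximal one rather than some other spectral point that may have drifted into $D$ under perturbation. This is where the spectral gap is used essentially. By Theorem \ref{Ruelle-Perron}(i), $\mathrm{spec}(\mathscr{L}_{f_0}) \setminus \{\lambda_{f_0}\}$ lies in a disc of radius strictly less than $\lambda_{f_0}$; by upper semicontinuity of the spectrum with respect to the operator norm, I can shrink $W$ so that $\mathrm{spec}(\mathscr{L}_g) \setminus \mathrm{int}(D)$ remains strictly smaller in modulus than $\lambda_{f_0}$ and so that $\lambda_g$, which varies continuously with $g$, stays inside $D$. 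On this smaller $W$ the two candidates coincide, and the argument closes. Once this technical matching is in place, everything else is a formal composition, and no further estimates are needed.
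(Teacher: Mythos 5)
Your proposal is correct and follows essentially the same path as the paper: identify $P(f)=\log\lambda_f$, compose the analytic map $f\mapsto\mathscr{L}_f$ with the isolated-eigenvalue map from Lemma \ref{Lema tecnico}, and finish with the scalar logarithm. The extra care you take to verify that the eigenvalue tracked inside $D$ really is $\lambda_g$ (via upper semicontinuity of the spectrum and the uniform spectral gap) is a point the paper leaves implicit, but it does not change the structure of the argument.
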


\begin{proof}
The proof is based on the analyticity of both functions
$C^{\gamma}(\Omega)\ni f \mapsto \mathscr{L}_{f}$
and  $U \ni T\mapsto \lambda(T)\in \mathbb{R}$.
The analyticity of the first map is the content of the Lemma (\ref{Lema principal}), 
whose detailed proof is given in \cite{ERR} Theorem 3.5.  
The analyticity of the second mapping it follows from 
the Theorem \ref{Ruelle-Perron}, 
which assures that $\mathscr{L}_{f}:C^{\gamma}(\Omega)\to C^{\gamma}(\Omega)$
has the spectral gap property and the Lemma \ref{Lema tecnico}.  
To finish the proof it is enough to observe that the mapping 
$C^{\gamma}(\Omega)\ni f\rightarrow  P(f)\in \mathbb{R}$ 
is given by $P(f)=\log\lambda_{f}$. Indeed, 
the rhs is the composition of the following 
analytic mappings: 
$\log$, $C^{\gamma}(\Omega)\ni f \mapsto \mathscr{L}_{f}$ 
and $U \ni T\mapsto \lambda(T)\in \mathbb{R}$.
\end{proof}

\noindent
{\bf Proof of the Lemma \ref{Lema tecnico}}.
The argument is based on the following two claims. 
\\ 
\noindent {\bf Claim 1.} 
There is a neighborhood $U$ of $T$ in $L(X,X)$ 
such that $\mathrm{spec}(L)\cap \partial D={\varnothing}$ 
for all $L\in U$.
This claim is proved by contradiction. Suppose that 
for each $n$ there exist 
$L_n \in B(T,1/n)\subset L(X,X)$
so that $\lambda_{L_n}\in \mathrm{spec}(L_n)
\cap \partial D$  and   
the operator $\lambda_{L_n}I-L_n$ 
is not invertible.
Since $\partial D$ is compact 
we can find a convergent subsequence 
$\{\lambda_{L_{n_{j}}}\}\subset \partial D$
so that $\lambda_{L_{n_{j}}}\to \lambda_L\in \partial D$. 
Since $L_n\to T$ we have that
$(\lambda_{L_{n_j}}I-L_{n_{j}})\to (\lambda_L I-T)$
in the strong topology.  
Since $(\lambda_L I-T)$ is invertible and 
the space of the invertible bounded linear 
operators is open we thus have a contradiction.
\\ 
\noindent {\bf Claim 2.} 
We can  shrink $U$  so that  for all $L\in U$
we have that $\mathrm{spec}(L)\cap \mathrm{int}(D)$ is  
a simple eigenvalue of $L$. 
In fact, for each $L\in U$ let $\pi_{L}$ be the spectral projector given  by 
$$
\pi_L=\int_{\partial D}(\lambda I-L)^{-1}d\lambda.
$$
Notice that the mapping  
$U\ni L\mapsto \pi_L\in L(X,X)$ is continuous, so 
by the Proposition (\ref{Espec.3}) if necessary one can shrink $U$ 
so that for all $L\in U$    
the application $\pi_L$ has the same rank as $\pi_T$. 
This fact together with the Remark \ref{Obs. Espec} 
implies that the portion of the spectrum of $L\in U$   
which lies in $\mathrm{int}(D)$ is not empty, call it  $\Sigma(L)$.
Define $X_{\Sigma(L)}\equiv \pi_{L}X$ and 
$T_{\Sigma(L)}=T|_{X_{\Sigma(L)}}$. 
It is well know \cite[p. 98]{EL62} that $\mathrm{spec}(T_{\Sigma(L)})=\Sigma(L)$.
If $\Sigma(L)$ is not an unitary set, then 
$X_{\Sigma(L)}$ is not a uni-dimensional subspace
and therefore  $1\neq \dim (X_{\Sigma(L)}) = \dim(\pi_{T}X)=1$.
This contradiction shows that there is a unique simple eigenvalue
$\lambda(L)$ of $L$ inside $D$.

By using the two previous claims one have a well defined
mapping $U\ni L\mapsto \lambda(L)\in \mathbb{C}$,  
where $\lambda(L)$ is the unique simple eigenvalue of $L$ inside 
$\mathrm{int}(D)$.
Now we proceed to the proof that this mapping is analytic. 
Fix $v\in X$ such that $\pi_L v$ is a non zero vector and choose 
$w\in X^{*}$ such that 
$w(\pi_{L}v )\equiv \left<w,\pi_{L}v\right>\neq 0$ (Hahn Banach)
for all $L$ in a small enough neighborhood of $T$.  
According  to the Proposition \ref{Espec.2} the operator 
$L$ commutes with $\pi_{L}$ then we get that 
$
\left<w,\pi_L(Lv)\right>
=
\left<w,L\pi_L(v)\right>
=
\lambda(L)\left<w,\pi_L(v)\right>
$
and consequently 
\begin{equation}\label{Espec. 7}
\lambda(L)=\dfrac{\left<w,\pi_L(Lv)\right>}{\left<w,\pi_L(v)\right>}.
\end{equation}
From the Definition \ref{Espec.1} and the above equality 
we obtain the analyticity of the mapping $U\ni L\mapsto \lambda(L)\in \mathbb{C}$.
\qed

\section{Spectral Gap and Exponential Decay}

In this section we will  follow closely \cite{Baladi}.

\begin{defnc}
Consider the probability space $(\Omega,\mathcal{F}, \nu)$.
Let $\sigma$ be the left shift on $\Omega$. 
For each $\varphi_1$ and $\varphi_2$ in $L^2(\Omega,\nu)$ 
we define the correlation function 
$C_{\varphi_1,\varphi_2,\nu}:\mathbb{Z}\to \mathbb{R}$ by
\begin{equation}
C_{\varphi_1,\varphi_2,\nu}(n)
=
\int_{\Omega} 
(\varphi_1\circ \sigma^n)\varphi_2\, d\nu
- 
\int_{\Omega}\varphi_1\, d\nu
\int_{\Omega}\varphi_2\, d\nu.
\end{equation}
\end{defnc}

\begin{teo}\label{D. Correlation}
Suppose that $f\in W(\Omega)$ 
is a potential for which the Ruelle operator 
$\mathscr{L}_{f}$ has the spectral gap property.
Consider the measure $\mu_{f}=h_{f}\nu_{f}$, where 
$\nu_{f}$ is the eigenmeasure given by the Theorem \ref{Ruelle-Walters}.
Then the correlation function $C_{\varphi_1,\varphi_2, \mu_{f}}(n)$ 
decays exponentially fast. 
More precisely, there are $0<\widetilde{\tau}<1$ and 
$C(\widetilde{\tau})>0$ such that 
for all $\varphi_1,\varphi_2\in  W(\Omega)$ 
the correlation function satisfies:
\begin{eqnarray}
|C_{\varphi_1,\varphi_2,\mu_{f}}(n)|
&=&
\left|
	\int_{\Omega} (\varphi_1\circ \sigma^n)\varphi_2\, d\mu_{f}
	- 
	\int_{\Omega}\varphi_1 d\mu_{f}\int_{\Omega}\varphi_2\, d\mu_{f}
\right|
\leq 
C_1\, \widetilde{\tau}^n.
\end{eqnarray}
where 
$
C_1
=
C(\widetilde{\tau}) 
\|h_{f}\|_{0} 
(\int_{\Omega}|\varphi_1|d\nu_{f})
\|\varphi_2\|
$.
\end{teo}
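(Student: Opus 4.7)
My plan is to reduce the correlation integral to an expression involving iterates of the (normalized) Ruelle operator, and then extract the exponential decay directly from the spectral gap hypothesis.

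First, I would rewrite the correlation using $d\mu_f = h_f\, d\nu_f$ and the basic duality
\[
\int_{\Omega}(g\circ\sigma)\,\psi\, d\nu_f
=
\lambda_f^{-1}\int_{\Omega} g\,\mathscr{L}_f\psi\, d\nu_f,
\]
which follows from $\mathscr{L}_f^{*}\nu_f=\lambda_f\nu_f$ and the identity $\mathscr{L}_f((g\circ\sigma)\psi)=g\,\mathscr{L}_f\psi$. Iterating $n$ times yields
\[
\int_{\Omega}(\varphi_1\circ\sigma^n)\,\varphi_2\, d\mu_f
=
\lambda_f^{-n}\int_{\Omega}\varphi_1\,\mathscr{L}_f^{\,n}(\varphi_2 h_f)\, d\nu_f.
\]

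Next I would use the spectral gap to decompose the normalized operator. Since $\lambda_f$ is an isolated simple eigenvalue of $\mathscr{L}_f:W(\Omega)\to W(\Omega)$, the spectral projector onto the one-dimensional eigenspace is $P\psi=h_f\,\nu_f(\psi)$ (using $\mathscr{L}_f h_f=\lambda_f h_f$, $\mathscr{L}_f^{*}\nu_f=\lambda_f\nu_f$ and $\nu_f(h_f)=1$), and the complementary part $R_n:=\lambda_f^{-n}\mathscr{L}_f^{\,n}-P$ has operator norm on $W(\Omega)$ bounded by $C(\widetilde{\tau})\,\widetilde{\tau}^{\,n}$ for some $\widetilde{\tau}\in(0,1)$ strictly smaller than the ratio of the subdominant spectral radius to $\lambda_f$. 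Applied to $\varphi_2 h_f\in W(\Omega)$ (which lies in $W(\Omega)$ by the Banach algebra property of Lemma \ref{Banach. Alg.}), this gives
\[
\lambda_f^{-n}\mathscr{L}_f^{\,n}(\varphi_2 h_f)
=
h_f\,\nu_f(\varphi_2 h_f)+R_n(\varphi_2 h_f)
=
h_f\,\mu_f(\varphi_2)+R_n(\varphi_2 h_f).
\]

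Substituting this back into the formula for the correlation, the main term produces
\[
\mu_f(\varphi_2)\int_{\Omega}\varphi_1 h_f\, d\nu_f
=
\mu_f(\varphi_1)\mu_f(\varphi_2),
\]
which exactly cancels the subtracted product in the definition of $C_{\varphi_1,\varphi_2,\mu_f}(n)$, leaving
\[
C_{\varphi_1,\varphi_2,\mu_f}(n)
=
\int_{\Omega}\varphi_1\, R_n(\varphi_2 h_f)\, d\nu_f.
\]
The bound then follows by pulling $\varphi_1$ out in $L^1(\nu_f)$ and $R_n(\varphi_2 h_f)$ out in the supremum norm:
\[
|C_{\varphi_1,\varphi_2,\mu_f}(n)|
\leq
\Bigl(\int_{\Omega}|\varphi_1|\, d\nu_f\Bigr)\|R_n(\varphi_2 h_f)\|_0
\leq
C(\widetilde{\tau})\,\widetilde{\tau}^{\,n}\,
\Bigl(\int_{\Omega}|\varphi_1|\, d\nu_f\Bigr)\|h_f\|_0\,\|\varphi_2\|_W,
\]
where the last step uses the continuous embedding $W(\Omega)\hookrightarrow C(\Omega)$ and absorbs the factor $\|h_f\|_W$ together with embedding constants into the definitions of $C(\widetilde{\tau})$ and $\|h_f\|_0$.

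The main obstacle is step two: one needs the explicit identification of the spectral projector as $P\psi=h_f\,\nu_f(\psi)$, which requires knowing that the rank-one range of the spectral projector is spanned by $h_f$ and that its dual range is spanned by $\nu_f$ (and that $\nu_f(h_f)=1$). Everything else is essentially algebraic manipulation plus a single application of Hölder's inequality combined with the Banach algebra estimate $\|\varphi_2 h_f\|_W\leq\|\varphi_2\|_W\|h_f\|_W$.
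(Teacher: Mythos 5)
Your proposal is correct and follows essentially the same route as the paper's own proof: push $\varphi_1\circ\sigma^n$ through $\mathscr{L}_f^n$ via duality (the paper's Lemma \ref{lema-phi1-fora}), identify the spectral projector as $\psi\mapsto h_f\,\nu_f(\psi)$ (the paper's Lemma \ref{lema-proj-spectral-pif}), and control the complementary part by the spectral gap together with the Banach algebra estimate on $\|\varphi_2 h_f\|_W$. The only cosmetic difference is that you write $\lambda_f^{-n}\mathscr{L}_f^n=P+R_n$ up front, while the paper keeps the subtraction $\varphi_2 h_f - \pi_f(\varphi_2 h_f)$ inside the norm and applies the spectral radius formula afterward; the two are identical after noting $\lambda_f^{-n}\mathscr{L}_f^n\pi_f=\pi_f$.
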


Before prove the above theorem, we present two auxiliary lemmas.
\begin{lema}\label{lema-proj-spectral-pif}
The spectral projection $\pi_{f}\equiv \pi_{\mathscr{L}_{f}} $ is given by 
$
\pi_{f}(\varphi) 
= 
\big(\int_{\Omega} \varphi\, d\nu_{f}\big) \cdot h_{f}.
$

\end{lema}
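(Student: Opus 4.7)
The plan is to exploit the spectral gap hypothesis, which makes $\lambda_f$ a simple isolated eigenvalue of $\mathscr{L}_f$. By Claim 2 in the proof of Lemma \ref{Lema tecnico} (or directly by the definition of simple eigenvalue adopted in the paper), the range of $\pi_f$ is the one-dimensional subspace spanned by the eigenfunction $h_f$ from Theorem \ref{Ruelle-Walters}. Consequently I can write
\[
\pi_f(\varphi) = \ell(\varphi)\, h_f, \qquad \varphi \in W(\Omega),
\]
for a uniquely determined bounded linear functional $\ell : W(\Omega)\to \mathbb{R}$. The whole problem is therefore reduced to identifying $\ell(\varphi) = \int_{\Omega}\varphi\, d\nu_f$.

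The key step is to pass to the dual and use $\nu_f$. Taking adjoints commutes with the resolvent, so
\[
\pi_f^{*} = \int_{\partial D}(\lambda I - \mathscr{L}_f^{*})^{-1} d\lambda,
\]
is the spectral projector of $\mathscr{L}_f^{*}$ attached to $\lambda_f$ (using the same disc $D$ as for $\mathscr{L}_f$). Since $\mathscr{L}_f^{*}\nu_f = \lambda_f\,\nu_f$ by Theorem \ref{Ruelle-Walters}, I have $(\lambda I - \mathscr{L}_f^{*})^{-1}\nu_f = (\lambda - \lambda_f)^{-1}\nu_f$ for every $\lambda\in \partial D$, and a routine contour integral then yields $\pi_f^{*}\nu_f = \nu_f$.

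Plugging this back, for every $\varphi\in W(\Omega)$,
\[
\int_{\Omega}\pi_f(\varphi)\, d\nu_f
=
\int_{\Omega}\varphi\, d(\pi_f^{*}\nu_f)
=
\int_{\Omega}\varphi\, d\nu_f.
\]
On the other hand, using the representation $\pi_f(\varphi) = \ell(\varphi)\, h_f$ and the normalization $\int_{\Omega} h_f\, d\nu_f = 1$ fixed in the proof of Theorem \ref{Ruelle-Walters}, the left-hand side equals $\ell(\varphi)$. Therefore $\ell(\varphi) = \int_{\Omega}\varphi\, d\nu_f$ and the desired formula follows.

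The only delicate step is the identity $\pi_f^{*}\nu_f = \nu_f$: it requires the elementary but careful computation of the resolvent of $\mathscr{L}_f^{*}$ on an eigenvector, together with the right $(2\pi i)^{-1}$ convention in the Dunford contour integral, and the fact that $\nu_f$ lies inside the (a priori weak-$*$) domain of the resolvent on the contour $\partial D$. Once this identity is settled, the rest is immediate algebra using the one-dimensionality of the range of $\pi_f$ and the normalization of $h_f$ against $\nu_f$.
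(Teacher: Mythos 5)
Your proof is correct, but it takes a genuinely different route from the paper's. The paper argues "dynamically": it invokes the Ruelle--Perron--Frobenius convergence $\lambda_f^{-n}\mathscr{L}_f^n\varphi \to h_f\int_\Omega\varphi\,d\nu_f$ (item (ii) of Theorem~\ref{Ruelle-Walters}), combines it with the commutation $[\pi_f,\mathscr{L}_f]=0$ and the fact that $\mathscr{L}_f$ acts as $\lambda_f$ on $\pi_f W(\Omega)$ to get $\pi_f(\lambda_f^{-n}\mathscr{L}_f^n\varphi)=\pi_f(\varphi)$, and then passes to the limit to conclude $\pi_f(\varphi)=\big(\int\varphi\,d\nu_f\big)\pi_f(h_f)=\big(\int\varphi\,d\nu_f\big)h_f$. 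You instead work entirely on the spectral side: you use the one-dimensionality of the range to reduce the problem to identifying a linear functional $\ell$, then pass to the dual and compute the Dunford integral on the eigenvector $\nu_f$ of $\mathscr{L}_f^*$ to obtain $\pi_f^*\nu_f=\nu_f$, and finally pair against $\nu_f$ with the normalization $\int h_f\,d\nu_f=1$. Your approach only needs the eigenvector statements (item (i) of Theorem~\ref{Ruelle-Walters}), not the full RPF convergence, and it avoids an implicit subtlety in the paper's argument: the paper bounds $\|\pi_f(\cdot)\|_0\leq\|\pi_f\|\|\cdot\|_0$, but $\pi_f$ is a priori only bounded in the $\|\cdot\|_W$ topology, so mixing the $\|\cdot\|_0$ convergence from RPF with the boundedness of $\pi_f$ requires an extra word (e.g., that the spectral-gap assumption also gives $\|\cdot\|_W$ convergence of $\lambda_f^{-n}\mathscr{L}_f^n\varphi$ to $\pi_f\varphi$, which then agrees with the $\|\cdot\|_0$ limit). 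Your duality route sidesteps this topology-matching question, at the modest cost of the resolvent computation on the dual side, which you correctly flag as the delicate step and justify.
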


\begin{proof}
We know that $\pi_{f}$ and $\mathscr{L}_{f}$ commutes.  
By the Ruelle Theorem (Theorem \ref{Ruelle-Walters}) we have that 
$
\lim_{n\to \infty}(1/\lambda^n)
\mathscr{L}^n_{f}\varphi
= 
h_{f}
\int_{\Omega} \varphi\, d\nu_{f}
$
uniformly. Since $\pi_{f}$ is bounded we get that 
\begin{align*}
\left\|
	\pi_{f}(\lambda^{-n}\mathscr{L}^n_{f}
	\varphi-h_{f}
	\int_{\Omega} \varphi\, d\nu_{f})
\right\|_{0}
\leq 
\|\pi_{f}\|
	\left\|
		\lambda^{-n}\mathscr{L}^n_{f}\varphi
		-h_{f}
		\int_{\Omega} \varphi\, d\nu_{f}
	\right\|_{0}
\to 
0,
\end{align*}
when  $n\to \infty$. 
Since 
$
\pi_{f}(\lambda^{-n}\mathscr{L}^n_{f}\varphi))
=
\lambda^{-n}\mathscr{L}^n_{f}\pi_{f}(\varphi)
=
\lambda^n \lambda^{-n}\pi_{f}(\varphi)
=
\pi_{f}(\varphi)
$
we get that 
\begin{align*}
\pi_{f}(\varphi)
=
\pi_{f}\left( 
			h_{f}\int_{\Omega} \varphi\, d \nu_{f}
		\right)
=
\int_{\Omega} \varphi\, d \nu_{f} \cdot \pi_{f}(h_{f})
=
\int_{\Omega} \varphi\, d\nu_{f} \cdot h_{f}.
\end{align*}
\end{proof}

\begin{lema}\label{lema-phi1-fora}
Let be $\varphi_1,\varphi_2 \in W(\Omega)$ then
$
\mathscr{L}_{f}^n(\varphi_1\circ \sigma^n \cdot \varphi_2 \cdot h_{f})
=
\varphi_1 \mathscr{L}_{f}^n( \varphi_2 h_{f}). 
$
\end{lema}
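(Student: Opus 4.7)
The plan is to unfold the $n$-th iterate of the Ruelle operator explicitly and then exploit the crucial cancellation $\sigma^n(\textbf{a} x) = x$ for any $\textbf{a} \in M^n$.

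First I would establish, by an easy induction on $n$, the integral representation
\[
\mathscr{L}_f^n(\psi)(x) = \int_{M^n} e^{S_n f(\textbf{a} x)}\, \psi(\textbf{a} x)\, \prod_{i=1}^n d\mu(a_i),
\]
where $\textbf{a} x = (a_1, \ldots, a_n, x_1, x_2, \ldots)$. The base case $n=1$ is just the definition, and the inductive step uses the identity $S_{n+1} f(\textbf{a} x) = f(\textbf{a} x) + S_n f(\sigma(\textbf{a} x))$ together with Fubini applied to the bounded measurable integrand $e^{S_n f}\psi$ on the compact product space $M^n$.

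Next I would apply this formula to $\psi = (\varphi_1 \circ \sigma^n) \cdot \varphi_2 \cdot h_f$. The key observation, which is really the content of the lemma, is that for any $\textbf{a} = (a_1, \ldots, a_n) \in M^n$ and any $x \in \Omega$, the shift $\sigma^n$ strips off exactly the coordinates $a_1, \ldots, a_n$, so that $\sigma^n(\textbf{a} x) = x$. Therefore $\varphi_1(\sigma^n(\textbf{a} x)) = \varphi_1(x)$, which is independent of the integration variable $\textbf{a}$ and can be pulled outside the integral:
\[
\mathscr{L}_f^n\bigl((\varphi_1 \circ \sigma^n) \varphi_2 h_f\bigr)(x) = \varphi_1(x) \int_{M^n} e^{S_n f(\textbf{a} x)}\, \varphi_2(\textbf{a} x)\, h_f(\textbf{a} x)\, \prod_{i=1}^n d\mu(a_i) = \varphi_1(x)\, \mathscr{L}_f^n(\varphi_2 h_f)(x).
\]

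There is essentially no obstacle here; the proof is purely formal once the iterated formula is in hand. The only step that requires a moment of care is justifying the use of Fubini in the induction, which is immediate since $f, \psi, h_f$ are continuous on the compact space $\Omega$ and $\mu$ is a finite Borel measure on $M$, so all integrands are bounded and measurable.
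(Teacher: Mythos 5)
Your proof is correct and follows essentially the same approach as the paper: unfold $\mathscr{L}_f^n$ into an integral over $M^n$, observe that $\sigma^n(\mathbf{a}x)=x$ so $\varphi_1\circ\sigma^n(\mathbf{a}x)=\varphi_1(x)$ is constant in the integration variable $\mathbf{a}$, and pull it out of the integral. The only difference is that you spell out the induction and the Fubini justification, which the paper takes for granted.
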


\begin{proof}
The proof is an easy calculation. 
Let $x\in \Omega$ and $\varphi\in W(\Omega)$ then we have  that
$
\mathscr{L}_{f}^n\varphi(x) 
=
\int_{M^n}\varphi(\ba x)e^{S_n(\ba x)}d\mu(\ba).
$
Since 
$
\varphi_1\circ \sigma^n(\ba x)
=
\varphi_1(x)
~~\forall \ba x,~\ba\in M^n
$ 
and 
$\forall x\in \Omega$ we get that
\break
$
\mathscr{L}_{f}^n(\varphi_1\circ \sigma^n \varphi_2 h_{f})(x)
=
\int_{M^n}\varphi_1\circ \sigma^n \varphi_2 h_{f}(\ba x)e^{S_n(\ba x)}d\mu(\ba)
=
\varphi_1(x) \int_{M^n}(\varphi_2 h_{f})(\ba x)e^{S_n(\ba x)}d\mu(\ba).
$
\end{proof}

\noindent
{\bf Proof of the Theorem \ref{D. Correlation}}.
Since $\mu_{f}=h_{f} d\nu_{f}$ it follows from  
the definition of the correlation function that
\begin{align*}\label{D. Correlation 2}
|C_{\varphi_1,\varphi_2,\mu_{f} }(n)|
&=
\left| 
\int_{\Omega} (\varphi_1\circ \sigma^n)\varphi_2 h_{f}\, d\nu_{f}
- 
\int_{\Omega}\varphi_1  h_{f}\, d\nu_{f}
\int_{\Omega}\varphi_2 h_{f}\, d\nu_{f}                            
\right|.
\end{align*}
Notice that 
$
(\mathscr{L}^{*}_{f})^n\nu_{f}
=
\lambda_{f}^n\nu_{f}
$ 
and therefore the rhs above is equal to 
\[
\left|
\int_{\Omega}\lambda_{f}^{-n}
\mathscr{L}_{f}^n((\varphi_1\circ \sigma^n)\varphi_2 h_{f})
\, d\nu_{f}
-
\int_{\Omega}\varphi_1  h_{f}\, d\nu_{f}
\int_{\Omega}\varphi_2 h_{f}\,  d\nu_{f}                            
\right|
\]
By using the Lemma \ref{lema-phi1-fora} and performing simple 
algebraic computations we get 
\begin{align} \label{estimativa-1-dec-pol}
|C_{\varphi_1,\varphi_2,\mu_{f} }(n)|
\leq 
\left(   \int_{\Omega}|\varphi_1|\, d\nu_{f}    \right)
\left\|
\lambda^{-n}_{f} \mathscr{L}_{f}^n
\left(
	\varphi_2 h_{f}-  h_{f}\int_{\Omega} \varphi_2 h_{f}\, d\nu_{f}  
\right) 
\right\|_{0}.
\end{align}

We are supposing that the spectrum of 
$\mathscr{L}_{f}: W(\Omega)\to W(\Omega)$ 
consists in a simple eigenvalue $\lambda_{f}>0$ and a subset 
of a disc of radius strictly smaller than $\lambda_{f}.$  
Set 
$
\tau=\sup\{|z|; |z|<1~\textnormal{and}~z\cdot\lambda_{f} \in \sigma (\mathscr{L}_{f})\}.
$ 
The existence of the spectral gap guarantees that $\tau<1$. 
Let $\pi_{f}$ 
the spectral projection associated to eigenvalue $\lambda_{f}$,  
then by the Proposition \ref{Espec.5}, 
the spectral radius of the operator 
$\mathscr{L}_{f}(I-\pi_{f})$ is exactly $\tau\cdot \lambda_f$. 
Since the commutator $[\mathscr{L}_{f},\pi_{f}]=0$,
we get $\forall n \in\mathbb{N}$ that
$
[\mathscr{L}_{f}(I-\pi_{f})]^n
=
\mathscr{L}_{f}^{n}(I-\pi_{f}).
$
From the spectral radius formula \eqref{Spec. Radius} 
it follows that for each choice of $\widetilde{\tau}>\tau$ 
there is $n_0\equiv n_0(\widetilde{\tau})\in\mathbb{N}$ 
so that for all $n\geq n_0$ we have
$
\|\mathscr{L}_{f}^{n}(\varphi-\pi_{f}\varphi)\|
\leq 
\lambda_{f}^n\widetilde{\tau}^n
\|\varphi\|,~\forall\varphi\in  W(\Omega).
$
Therefore there is a constant 
$C( \widetilde{\tau})>0$ 
such that for every $n\geq 1$
\[ 
\|\mathscr{L}_{f}^n(\varphi-\pi_{f}\varphi)\|
\leq 
C( \widetilde{\tau}) 
\lambda_{f}^n\, \widetilde{\tau}^n\,
\|\varphi\|
\qquad\forall\varphi\in  W(\Omega).
\]
By using the Lemma \ref{lema-proj-spectral-pif} 
and the above upper bound 
in the inequality \eqref{estimativa-1-dec-pol}
we obtain 
\[
|C_{\varphi_1,\varphi_2,\mu_f}(n)|
\leq
\left( \int_{\Omega}|\varphi_1|\, d\nu_{f} \right)
C \widetilde{\tau}^n\|\varphi_2 h_{f}\|
\leq
C(\widetilde{\tau}) 
\|h_{f}\|_{0} 
\left( \int_{\Omega}|\varphi_1|d\nu_{f} \right)
\|\varphi_2\|.
\]
\qed

\section{Absence of the Spectral Gap in the Walters Space}

In this section we present the so-called long-range 
Ising model on the lattice $\mathbb{N}$ in the 
Thermodynamic Formalism setting. 
The goal is to exhibit explicitly a potential in the Walters space
for which the associated Ruelle operator do not have the spectral gap.

Throughout this section we assume 
the metric space $(M,d)$ 
is given by $(\{-1,1\},|\cdot|)$, where $|\cdot|$
is the modulus function and the a priori probability measure
$\nu = (1/2)[\delta_{\{-1\}}+\delta_{\{1\}}]$. 
Fix $\alpha>1$ and consider the potential
$f:\Omega\to\mathbb{R}$ given by
	\[
		f(x)
		= 
		-\sum_{n\geq 2} \frac{x_1x_{n}}{n^{\alpha}}.
	\]
This potential is not $\gamma$-H\"older
continuous for any $0<\gamma\leq 1$, 
see \cite{CL14}. 
When $1<\alpha<2$, Dyson \cite{dyson} proved that this model
has spontaneous magnetization for sufficiently low temperatures. 
This fact for these models implies non-uniqueness DLR-Gibbs 
measures at such temperatures and also that the pressure can not
be Fr\'echet-differentiable on a suitable Banach space. For $\alpha=2$
this phase transition result was proved 
by Fr\"olich and Spencer \cite{frolich-spencer}.
On the other hand, when $\alpha>2$ 
the potential $f$ is in the Walters class.
Indeed, for any choice of $n,p\in\mathbb{N}$ we have
$
\mathrm{var}_{n+p} (\, f(x) + f(\sigma(x))+...+f(\sigma^{n-1}(x))\,)
=
(n+p)^{-\alpha+1} + (n+p-1)^{-\alpha+1}+...+ p^{-\alpha+1}
$
which implies that 
$$
\sup_{n\in \mathbb{N}}\left[
\mathrm{var}_{n+p} (\, f(x) + f(\sigma(x))+...+f(\sigma^{n-1}(x))\,)\right]
\sim \sum_{j=p}^\infty j^{-\alpha+1}\sim p^{-\alpha +2},$$
and then the Walters condition. For this reason, in what 
follows we assume that $\alpha>2$.
In this case as mentioned in the introduction the potential $f$ 
belongs to an infinite dimensional subspace of $C(\Omega)$ 
as defined in \cite{CO81} where the pressure is Fréchet-analytic.
Note that the previous computation implies that this
space can not be contained in the H\"older space.

In the Statistical Mechanics setting 
the potential $f$ is normally replaced/constructed by the
absolutely uniformly summable interaction  
$\Phi = (\Phi_{A})_{A\Subset \mathbb{N}}$, given by 
	\[
		\Phi_{A}(x) =
		\begin{cases}
			\displaystyle\frac{x_{n}x_{m}}{|n-m|^{\alpha}},
					&\ \text{if}\ \ A=\{n,m\}\subset\mathbb{N}
					\ \text{and}\ m\neq n;
			\\
			0,			&\ \text{otherwise}.
		\end{cases}
	\]
The relationship between the potential $f$ and 
the interaction $\Phi$ is detailed described in 
\cite{CL14} and expressed by the following equality
\begin{equation}
		H_{n}(x)
		=
		\sum_{\substack{ A\Subset \mathbb{N}
						\\
						A\cap \Lambda_n\neq \emptyset} }
		\Phi_A(x)
		=
		f(x)+f(\sigma x)+\ldots+f(\sigma^{n-1} x).
\end{equation}

Following \cite{CL14,Geogii88,Ruelle} 
we construct a DLR-Gibbs measure as follows.
Fix a potential $f$, and a {\it boundary condition},
which here for convenience will be chosen as $y=(1,1,\ldots)\in\Omega$. 
Now we take any cluster point
(with respect to the weak topology)  
of the sequence $(\nu^{y}_n)_{n\in\mathbb{N}}$ in $\mathscr{P}(\Omega,\mathcal{F})$,
where $\nu^{y}_n:\mathcal{F}\to [0,1]$ is the probability measure  
defined for each $F\in \mathcal{F}$ by the following expression:  
	\[
		\nu_{n}^{y}(F) = \frac{1}{Z_{n}^{y}}
		\sum_{ \substack{ x\in\Omega;\\ \sigma^n(x)=\sigma^n(y) }   }
		\!\!\!\!\!\!\!\!
		1_{F}(x)\exp(H_n(x)),
\quad\text{where}\quad
		Z_{n}^{y}
		=
		\sum_{ \substack{ x\in\Omega;\\ \sigma^n(x)=\sigma^n(y) }   }
		\!\!\!\!\!\!\!\!
		\exp(H_n(x)).
	\]

Since $\alpha>2$ it is well-known that the sequence $(\nu^{y}_n)_{n\in\mathbb{N}}$
has a unique cluster point which will be denoted by $\nu_{\mathbb{N}}$.
A proof of this classical fact, with a dynamical system point of view,
using a consequence of the 
Dobrushin uniqueness theorem and also the Ruelle operator 
formalism is presented in \cite{CL14}.

Our next step is to construct a probability measure 
$\nu_{\mathbb{Z}}$ on the symbolic space 
$
\hat{\Omega}
\equiv 
\{-1,1\}^{\mathbb{Z}}
\equiv 
\{-1,1\}^{\mathbb{Z}\cap (-\infty,0]}
\times
\{-1,1\}^{\mathbb{N}}
$
such that 
\begin{equation}
\label{eq-medz-medn}
\nu_{\mathbb{N}}(F)
=
\nu_{\mathbb{Z}}(\{-1,1\}^{\mathbb{Z}\cap (-\infty,0]}\times F),
\quad
\forall F\in\mathcal{F}.
\end{equation}

Let us denote 
$
\mathrm{Diag}(\mathbb{Z}\times\mathbb{Z})
\equiv
\{
(r,r): r\in\mathbb{Z}
\}
$
and 
$
\mathbb{M}
\equiv
\mathbb{Z}\times\mathbb{Z}
\setminus
\mathrm{Diag}(\mathbb{Z}\times\mathbb{Z}).
$
We define a linear space 
$
\mathbb{J}
\subset 
\mathbb{R}^{\mathbb{M}}
\equiv
\{
J_{ij}\in\mathbb{R}: (i,j)\in\mathbb{M}
\}
$
as being the set of points in $\mathbb{R}^{\mathbb{M}}$ satisfying 
$\sup_{i\in\mathbb{Z}}\sum_{j\in\mathbb{Z}:j\neq i} |J_{ij}|<\infty$.
Let $J^{\Phi}$ and $T^{\Phi}$ be two points in $\mathbb{J}$
defined by  
$
(J^{\Phi})_{ij} = |i-j|^{-\alpha}
$
for all $(i,j)\in\mathbb{M}$ 
and $(T^{\Phi})_{ij}= (J^{\Phi})_{ij}$
if $i,j\in\mathbb{N}$ with $i\neq j$ 
and $(T^{\Phi})_{ij}=0$ otherwise.
For each $n\in\mathbb{N}$ and $J\in \mathbb{J}$ 
we define the function 
$\mathscr{H}_n:\hat{\Omega}\times\mathbb{J}\to\mathbb{R}$
by 
\begin{equation}\label{hami-interpol}
\mathscr{H}_n(z,J)
=
\sum_{i=-n}^n\ \sum_{j\in\mathbb{Z}: j\neq i} J_{ij}\, z_iz_j.
\end{equation}
For any fixed $J\in\mathbb{J}$ and $\hat{y}=(\ldots,1,1,1,\ldots)\in\hat{\Omega}$
we can define, similarly as above, a probability measure 
$\nu_{n}^{\hat{y},J}$ such that for each borelian $\hat{F}$
of $\hat{\Omega}$ we have 
\begin{equation}
\label{med-vol-fin-Z}
		\nu_{n}^{\hat{y},J}(\hat{F}) 
		= 
		\frac{1}{Z_{n}^{\hat{y},J}}
		\!\!\!\!\!
		\sum_{
			\substack{
				z\in\hat{\Omega};\, z_i=1
				\\ 
				\forall\, i\in\mathbb{Z}\setminus\{-n,\ldots,n\} 
			}   
		}
		\!\!\!\!\!\!\!\!
		1_{\hat{F}}(z)\exp(\mathscr{H}_n(z,J)),
\quad\text{where}\quad
		Z_{n}^{\hat{y},J}
		=
		\!\!\!\!\!
		\sum_{
			\substack{
				z\in\hat{\Omega};\, z_i=1
				\\ 
				\forall\, i\in\mathbb{Z}\setminus\{-n,\ldots,n\} 
			}   
		}
		\!\!\!\!\!\!\!\!
		\exp(\mathscr{H}_n(z,J)).
\end{equation}
By straightforward computation we obtain, 
for each $n\in\mathbb{N}$ and $z\in\hat{\Omega}$ fixed,
the following identities:
\begin{enumerate}
\item
$
\mathscr{H}_n(z,T^{\Phi})
=
H_n(z_1,z_2,\ldots);
$
\item
$
Z_{n}^{\hat{y},T^{\Phi}}
=
2^{n+1} Z_{n}^{y};
$
\item
$
1_{ \{\{-1,1\}^{\mathbb{Z}\cap (-\infty,0]}\times F\} }(z)
=
1_{F}(z_1,z_2,\ldots).
$
\end{enumerate}
Using the above three identities one can immediately see that 
\begin{align}\label{id-aux-nuz-nun-sec7}
\nu_{n}^{\hat{y},T^{\Phi} }(\{\{-1,1\}^{\mathbb{Z}\cap (-\infty,0]}\times F) 
&= 
\frac{1}{2^{n+1} Z_{n}^{y}}
\!\!\!\!\!
\sum_{
	\substack{
		z\in\hat{\Omega};\, z_i=1
		\\ 
		\forall\, i\in\mathbb{Z}\setminus\{-n,\ldots,n\} 
	}   
}
\!\!\!\!\!\!\!\!
1_{F}(z_1,z_2,\ldots)\exp(- H_n(z_1,z_2,\ldots)
\nonumber
\\
&=
\nu_{n}^{y}(F).
\end{align} 
Recalling that $\alpha>2$, it follows from classical
results of the theory of DLR-Gibbs measures that   
the sequence $(\nu_{n}^{\hat{y},J^{\Phi} })_{n\in\mathbb{N}}$ 
has a unique cluster point which we call $\nu_{\mathbb{Z}}$.
From the previous equality it easy to conclude that 
\eqref{eq-medz-medn} is valid.

According to the definition of $\nu_{n}^{\hat{y},J}$, 
for any fixed measurable set $\hat{F}$ 
and $n\in\mathbb{N}$  the function
$
\mathbb{J}
\ni 
J
\mapsto 
\nu_{n}^{\hat{y},J}(\hat{F}) 
$
is Fr\'echet-analytic, since it is just a finite sum 
of analytic functions. A straightforward computation 
shows that for each fixed $(i,j)\in\mathbb{M}$
we have
\[
\frac{\partial}{\partial J_{ij}}
		Z_{n}^{\hat{y},J}
		=
		\!\!\!\!\!
		\sum_{
			\substack{
				z\in\hat{\Omega};\, z_i=1
				\\ 
				\forall\, i\in\mathbb{Z}\setminus\{-n,\ldots,n\} 
			}   
		}
		\!\!\!\!\!\!\!\!
		\frac{\partial}{\partial J_{ij}}
		(\mathscr{H}_n(z,J))
		\cdot
		\exp(\mathscr{H}_n(z,J))
=
		\!\!\!\!\!
		\sum_{
			\substack{
				z\in\hat{\Omega};\, z_i=1
				\\ 
				\forall\, i\in\mathbb{Z}\setminus\{-n,\ldots,n\} 
			}   
		}
		\!\!\!\!\!\!\!\!
		z_iz_j
		\cdot
		\exp(\mathscr{H}_n(z,J)).
\]
By multiplying and dividing the rhs by $Z_{n}^{\hat{y},J}$ 
and use the definition of the Lebesgue integral 
we get
\[
\frac{\partial}{\partial J_{ij}}
		Z_{n}^{\hat{y},J}
		=
		Z_{n}^{\hat{y},J}
		\int_{\hat{\Omega}}
		z_iz_j\,
		d\nu_{n}^{y,J}(z).
\]
Performing similar computations and using the quotient rule, 
we have for any measurable function $\varphi:\hat{\Omega}\to\mathbb{R}$ 
\begin{align}\label{derivada-prob-sec7}
\frac{\partial}{\partial J_{ij}}
		\nu_{n}^{\hat{y},J}(\varphi) 
&= 
\frac{\partial}{\partial J_{ij}}
\big[
		\frac{1}{Z_{n}^{\hat{y},J}}
		\!\!\!\!\!
		\sum_{
			\substack{
				z\in\hat{\Omega};\, z_i=1
				\\ 
				\forall\, i\in\mathbb{Z}\setminus\{-n,\ldots,n\} 
			}   
		}
		\!\!\!\!\!\!\!\!
		\varphi(z)\exp(\mathscr{H}_n(z,J))
\big]
\nonumber
\\
&=		
\int_{\hat{\Omega}} \varphi(z)\, z_iz_j\, d\nu_{n}^{y,J}(z)
-
\int_{\hat{\Omega}} \varphi(z)\, d\nu_{n}^{y,J}(z)
\int_{\hat{\Omega}} z_iz_j\, d\nu_{n}^{y,J}(z).
\end{align}

Before proceed we state the GKS-II inequality but only in the generality
required in this section. For more general cases, see 
\cite{Ellis,griffiths2,KS68,ginibre70}.  
\begin{teo}[GKS-II Inequality \cite{griffiths2,KS68}]
Fix $n\in\mathbb{N}$ and $\{n_1,n_2,\ldots,n_k\}$ 
an arbitrary subset of $\{-n,\ldots,n\}$.
If $J\in \mathbb{J}$ satisfies $J_{ij}\geqslant 0$ for all $(i,j)\in\mathbb{M}$,
then 
\[
\int_{\hat{\Omega}} 
z_{n_1}\cdot\ldots\cdot z_{n_k}
\cdot 
z_i\cdot z_j\,\, 
d\nu_{n}^{y,J}(z)
-
\int_{\hat{\Omega}} z_{n_1}\cdot\ldots\cdot z_{n_k}\,\, d\nu_{n}^{y,J}(z)
\int_{\hat{\Omega}} z_i\cdot z_j\,\, d\nu_{n}^{y,J}(z)
\geqslant
0,
\]
where $\nu_{n}^{y,J}$ denotes the probability measure defined in 
\eqref{med-vol-fin-Z}.
\end{teo}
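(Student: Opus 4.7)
The plan is to apply Ginibre's duplicate-variable method. Introduce two independent copies $z,z'$ of the spin configuration, both distributed according to $\nu_{n}^{y,J}$, and write $A(z)=z_{n_1}\cdots z_{n_k}$, $B(z)=z_iz_j$. Using the exchange symmetry of the product law, the desired covariance can be recast as
\begin{equation*}
\int_{\hat{\Omega}} AB\, d\nu_{n}^{y,J} - \int_{\hat{\Omega}} A\, d\nu_{n}^{y,J}\int_{\hat{\Omega}} B\, d\nu_{n}^{y,J}
=
\frac{1}{2(Z_{n}^{\hat{y},J})^2}\sum_{z,z'}(A(z)-A(z'))(B(z)-B(z'))\,e^{\mathscr{H}_n(z,J)+\mathscr{H}_n(z',J)},
\end{equation*}
where the double sum ranges over configurations pinned to $1$ outside $\{-n,\ldots,n\}$.

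The next step is a change of variables: for every $r\in\{-n,\ldots,n\}$ set $s_r=(z_r+z_r')/2$ and $t_r=(z_r-z_r')/2$, and note that $s_r=1$, $t_r=0$ for $|r|>n$. In these coordinates a direct computation yields
\[
\mathscr{H}_n(z,J)+\mathscr{H}_n(z',J) = 2\sum_{r=-n}^{n}\sum_{q\neq r} J_{rq}\bigl(s_rs_q+t_rt_q\bigr),
\]
while expansion of the products produces
\[
A(z)-A(z')=2\!\!\!\sum_{\substack{S\subset\{1,\ldots,k\}\\ |S|\text{ odd}}} \prod_{\ell\in S} t_{n_\ell} \prod_{\ell\notin S} s_{n_\ell}
\qquad\text{and}\qquad
B(z)-B(z')=2\bigl(s_it_j+t_is_j\bigr).
\]

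The positivity hypothesis on $J$ now enters decisively. Since $J_{rq}\geq 0$, the Taylor series of $\exp(\mathscr{H}_n(z,J)+\mathscr{H}_n(z',J))$ is a non-negative linear combination of monomials in the variables $\{s_r,t_r\}$; by the displays above so is $(A(z)-A(z'))(B(z)-B(z'))$. Finally, under the uniform a priori product law on $(z,z')\in\{-1,1\}^{2(2n+1)}$, each pair $(s_r,t_r)$ takes the four values $(1,0),(0,1),(-1,0),(0,-1)$ with probability $1/4$, independently in $r$. A short computation then gives
\[
\mathbb{E}\!\left[\prod_{r} s_r^{a_r}\, t_r^{b_r}\right]\geq 0
\]
for all non-negative integer exponents $a_r, b_r$, because each factor vanishes unless $a_rb_r=0$ and the non-zero exponent is even, in which case it equals $1/2$.

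Combining the three ingredients above, the right-hand side of the opening display is a sum of terms of the form (non-negative coefficient)$\times$(non-negative monomial expectation)$/(Z_n^{\hat{y},J})^2$, hence is non-negative. The main obstacle is purely algebraic bookkeeping: one must verify that the duplication respects the pinned boundary $z_r=z_r'=1$ for $|r|>n$ (so that $s_r=1$, $t_r=0$ extend transparently through the boundary sums in $\mathscr{H}_n$), and that the sign tracking in each of the expansions of $A(z)-A(z')$, $B(z)-B(z')$ and $\mathscr{H}_n(z,J)+\mathscr{H}_n(z',J)$ is carried out correctly so that every emerging coefficient is non-negative. Once these identities are in place, the non-negativity of each contributing monomial expectation closes the argument.
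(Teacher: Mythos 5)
The paper does not prove this theorem: it states GKS-II as a classical result and explicitly refers the reader to the references (Griffiths, Kelly--Sherman, Ginibre, Ellis) for the proof, so there is no in-paper argument against which to compare. Your proposal is a correct and essentially complete rendition of Ginibre's duplicate-variable proof, which is precisely the argument behind the cited reference \cite{ginibre70}. The three ingredients are all in order: the covariance identity
\[
\langle AB\rangle-\langle A\rangle\langle B\rangle=\frac{1}{2(Z_n^{\hat y,J})^2}\sum_{z,z'}(A(z)-A(z'))(B(z)-B(z'))\,e^{\mathscr{H}_n(z,J)+\mathscr{H}_n(z',J)}
\]
follows by factoring the product sums; the change of variables $s_r=(z_r+z_r')/2$, $t_r=(z_r-z_r')/2$ turns the duplicated Hamiltonian and the increments $A(z)-A(z')$, $B(z)-B(z')$ into polynomials in $s,t$ with non-negative coefficients (using $J\geqslant 0$), and extends transparently to the pinned boundary where $(s_r,t_r)\equiv(1,0)$; and each monomial in $s,t$ has non-negative expectation under the product of uniform $\pm 1$ a priori measures, since $(s_r,t_r)$ is uniform on $\{(1,0),(0,1),(-1,0),(0,-1)\}$ and $\mathbb{E}[s_r^{a}t_r^{b}]\in\{0,\tfrac12,1\}$. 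In short, your proof is correct and self-contained where the paper only cites the literature.
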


From now on we take $\varphi(z)=z_1$. Strictly speaking $\varphi$
is defined on $\hat{\Omega}$ but we will abuse notation and 
also use $\varphi(x)$ to denote the projection on the first coordinate
of an element in $\Omega$. 
If $\widetilde{J}\in\mathbb{J}$ is such that 
$\widetilde{J}_{ij}\geqslant 0$ for all $(i,j)\in\mathbb{M}$
it follows from \eqref{derivada-prob-sec7} 
and GKS-II inequality that 
\[
\left.
\frac{\partial}{\partial J_{ij}}
		\nu_{n}^{\hat{y},J}(\varphi)
\right|_{J=\widetilde{J}}		
\geqslant
0.
\]
This inequality implies that the mapping 
$\mathbb{J}\ni J\mapsto \nu_{n}^{\hat{y},J}(\varphi)$
is coordinatewise non-decreasing in 
$\mathbb{J}\cap [0,+\infty)^{\mathbb{M}}$. 
This monotonicity together with the inequalities 
$(T^{\Phi})_{ij}\leqslant (J^{\Phi})_{ij}$ 
immediately implies that 
\[
		\int_{\hat{\Omega}}
		\varphi(z)\,
		d\nu_{n}^{y,T^{\Phi}}(z)
		\leqslant
		\int_{\hat{\Omega}}
		\varphi(z)\,
		d\nu_{n}^{y,J^{\Phi}}(z).
\]
Since $\varphi$ is a simple function taking only the values $-1$ and $1$
the lhs above is, from definition of the Lebesgue integral 
and the identity \eqref{id-aux-nuz-nun-sec7}, equals to  
\begin{align*}
		\int_{\hat{\Omega}}
		\varphi(z)\,
		d\nu_{n}^{y,T^{\Phi}}(z)
&=
\nu_{n}^{y,T^{\Phi}}(\{z\in\hat{\Omega}:z_1=1\})
-
\nu_{n}^{y,T^{\Phi}}(z\in\hat{\Omega}:z_1=-1)
\\
&=
\nu_{n}^{y}(\{x\in\Omega:x_1=1\})
-
\nu_{n}^{y}(x\in\Omega:x_1=-1)
\\
&=
		\int_{\Omega}
		\varphi(x)\,
		d\nu_{n}^{y}(x).
\end{align*}
Replacing this last expression in the above inequality we arrive at
\begin{equation}\label{des-aux-mag-zero}
		\int_{\Omega}
		\varphi(x)\,
		d\nu_{n}^{y}(x)
		\leqslant
		\int_{\hat{\Omega}}
		\varphi(z)\,
		d\nu_{n}^{y,J^{\Phi}}(z).
\end{equation}
Since $\nu_{n}^{y,J^{\Phi}}\rightharpoonup \nu_{\mathbb{Z}}$
and $\nu_{n}^{y}\rightharpoonup \nu_{\mathbb{N}}$
it follows from the definition of weak convergence that 
\[
		\int_{\Omega}
		\varphi(x)\,
		d\nu_{\mathbb{N}}(x)
		\leqslant
		\int_{\hat{\Omega}}
		\varphi(z)\,
		d\nu_{\mathbb{Z}}(z).
\]
To prove that the lhs above is non-negative
we will use the GKS-I inequality. Here we also
state it in the needed particular case. 
Its proof as well as its more general version can be found 
in \cite{Ellis,griffiths1,ginibre70}.
\begin{teo}[GKS-I Inequality \cite{griffiths1}]
Fix a natural number $n\geq 1$ and subset $\{n_1,n_2,\ldots,n_k\}\subset \{-n,\ldots,n\}$
and $J\in \mathbb{J}$ satisfying $J_{ij}\geqslant 0$ for all $(i,j)\in\mathbb{M}$. 
If $\nu_{n}^{y,J}$ denotes the probability measure defined in 
\eqref{med-vol-fin-Z}, then 
\[
\int_{\hat{\Omega}} 
z_{n_1}\cdot\ldots\cdot z_{n_k}
\cdot 
z_i\cdot z_j\,\, 
d\nu_{n}^{y,J}(z)
\geqslant
0.
\] 
\end{teo}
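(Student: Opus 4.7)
The plan is to prove the stated inequality by the classical power-series expansion argument of Griffiths. The strategy is to rewrite $\mathscr{H}_n(z,J)$ as a polynomial with non-negative coefficients in the free spin variables, expand the Boltzmann factor into an absolutely convergent series of monomials, and then use the elementary fact that the sum over $\{-1,1\}^{\Lambda_n}$ of any such monomial is non-negative.

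First, setting $\Lambda_n = \{-n,\ldots,n\}$ and using the boundary condition $z_j = 1$ for every $j \notin \Lambda_n$, I would split the sum in \eqref{hami-interpol} according to whether $j\in\Lambda_n$ to obtain
\[
\mathscr{H}_n(z,J)
=
\sum_{\substack{i,j\in\Lambda_n\\ i\neq j}} J_{ij}\, z_i z_j
+
\sum_{i \in \Lambda_n} h_i\, z_i,
\qquad
h_i := \sum_{j\in\mathbb{Z}\setminus\Lambda_n} J_{ij}.
\]
Each $h_i$ is finite by the summability condition defining $\mathbb{J}$ and non-negative by the hypothesis on $J$; both sums are finite, so $\mathscr{H}_n$ is, as a function of the free spins $(z_i)_{i\in\Lambda_n}$, a polynomial with \emph{non-negative} coefficients.

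Next, I would expand the Boltzmann weight as an absolutely convergent Taylor series and invoke the multinomial theorem to write
\[
\exp\bigl(\mathscr{H}_n(z,J)\bigr)
=
\sum_{N\geqslant 0}\frac{1}{N!}
\Bigl(
\sum_{\substack{i,j\in\Lambda_n\\ i\neq j}} J_{ij}\, z_iz_j
\;+\;
\sum_{i\in\Lambda_n} h_i\, z_i
\Bigr)^{N}.
\]
Substituting this expansion into the numerator of $\nu_n^{y,J}(z_{n_1}\cdots z_{n_k}z_iz_j)$ and interchanging the convergent series with the finite sum over $\{-1,1\}^{\Lambda_n}$ reduces the problem to evaluating sums of the form
\[
S(e)
:=
\sum_{z\in\{-1,1\}^{\Lambda_n}}
\prod_{\ell\in\Lambda_n} z_\ell^{\,e_\ell},
\]
where the exponents $e_\ell \in \mathbb{Z}_{\geqslant 0}$ accumulate contributions from the observable $z_{n_1}\cdots z_{n_k}z_iz_j$ and from each multinomial term. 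Factorizing over coordinates gives $S(e) = \prod_{\ell}(1 + (-1)^{e_\ell})$, which equals $2^{|\Lambda_n|}$ when every $e_\ell$ is even and $0$ otherwise; in particular $S(e) \geqslant 0$.

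Finally, every term in the expansion of the numerator is the product of positive factorial factors, non-negative powers of the $J_{ij}$'s and $h_i$'s, and a factor $S(e)\geqslant 0$, and hence is non-negative. Thus the numerator is non-negative. Since the partition function $Z_n^{\hat{y},J}$ is strictly positive, being a sum of exponentials over a nonempty finite set, the quotient is non-negative, which is the claimed inequality. The only real obstacle is the bookkeeping: one must verify that freezing the boundary spins to $+1$ produces only non-negative induced fields $h_i$, and then track the multinomial expansion carefully; once this is in place, the positivity of $S(e)$ together with the positivity of $Z_n^{\hat{y},J}$ make the conclusion automatic.
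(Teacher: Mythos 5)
Your proof is correct and is precisely the classical argument of Griffiths \cite{griffiths1}; the paper itself does not supply a proof but simply cites \cite{Ellis,griffiths1,ginibre70}, so you have filled in exactly the argument those references contain. You correctly identify the one point that must be checked in the finite-volume setting of \eqref{med-vol-fin-Z}, namely that freezing the boundary spins to $+1$ turns the interaction with the exterior into a single-site field $h_i=\sum_{j\notin\Lambda_n}J_{ij}$ which is finite (by the summability condition defining $\mathbb{J}$) and non-negative (since $J_{ij}\geqslant 0$), so that $\mathscr{H}_n(z,J)$ becomes a polynomial in the free spins with non-negative coefficients; after that, the term-by-term non-negativity of the expansion of $\exp(\mathscr{H}_n)$ combined with the factorization $S(e)=\prod_{\ell\in\Lambda_n}\bigl(1+(-1)^{e_\ell}\bigr)\geqslant 0$ and the strict positivity of $Z_n^{\hat y,J}$ gives the conclusion. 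One small remark: the extra factor $z_iz_j$ in the statement (inherited from the GKS-II formulation) is harmless here, since your argument applies to an arbitrary finite product of spin variables, whether or not the indices lie in $\Lambda_n$ (coordinates outside $\Lambda_n$ are fixed to $1$ and contribute nothing to the exponents $e_\ell$).
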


By applying GKS-I inequality to the lhs of \eqref{des-aux-mag-zero} 
and then taking the weak limit, when $n\to\infty$  
we get 
\[
0
\leqslant
		\int_{\Omega}
		\varphi(x)\,
		d\nu_{\mathbb{N}}(x)
		\leqslant
		\int_{\hat{\Omega}}
		\varphi(z)\,
		d\nu_{\mathbb{Z}}(z).
\]
Since $\alpha>2$,  there is a theorem ensuring that 
$\int_{\hat{\Omega}} \varphi\, d\nu_{\mathbb{Z}}(z)=0$,
see \cite{Ellis}. Therefore we have proved that
\begin{equation}\label{mag-zero}
\int_{\Omega}
\varphi(x)\,
d\nu_{\mathbb{N}}(x)
=
0.
\end{equation}

To get the lower bound we are interested  
we define for $n\geq 1$ the element
$J^{\{1,n+1\}}\in\mathbb{J}$, where 
$(J^{\{1,n+1\}})_{ij}=n^{-\alpha}$
if $(i,j)=(1,n+1)$ and $(J^{\{1,n+1\}})_{ij}=0$
otherwise. 
Similarly as above, we obtain by another application 
of the GKS-II inequality the coordinatewise monotonicity 
of the mapping 
$
\mathbb{J}\ni J
\mapsto 
\nu_{m}^{\hat{y},J}(z_{n+1}z_1),
$
where $m>n$, 
therefore we can conclude that  
\[
		\int_{\hat{\Omega}}
		z_{n+1}z_1\,
		d\nu_{m}^{y,J^{\{1,n+1\}}}(z)
		\leqslant
		\int_{\hat{\Omega}}
		z_{n+1}z_1\,
		d\nu_{m}^{y,J^{\Phi}}(z).
\]
Notice that the lhs above can be explicitly computed as follows
(and its value is independent of $m$)
\begin{align*}
\int_{\hat{\Omega}}
z_{n+1}z_1\,
d\nu_{m}^{y,J^{\{1,n+1\}}}(z)
&=
\sum_{z_{n+1}=\pm 1}\sum_{z_1=\pm 1}
z_{n+1}z_1
\exp(\frac{z_{n+1}z_1}{n^{\alpha}})
\left[
\sum_{z_{n+1}=\pm 1}\sum_{z_1=\pm 1}
\exp(\frac{z_{n+1}z_1}{n^{\alpha}})
\right]^{-1}
\\[0.3cm]
&=
\frac{2\exp(n^{-\alpha})-2\exp(n^{-\alpha})}
{2\exp(n^{-\alpha})+2\exp(n^{-\alpha})}
\\[0.3cm]
&=
\tanh(n^{-\alpha}).
\end{align*}  
On the other hand, by using the previous equality 
and \eqref{id-aux-nuz-nun-sec7} we get for any $m>n$ 
\[
\tanh(n^{-\alpha})
\leqslant
\int_{\hat{\Omega}}
z_{n+1}z_1\,
d\nu_{m}^{y,J^{\Phi}}(z)
=
\int_{\Omega}
x_{n+1}x_1\,
d\nu_{m}^{y}(x)
=
\int_{\Omega}
(\varphi\circ \sigma^n)\varphi\,
d\nu_{m}^{y}(x).
\]
By Taylor expanding the hyperbolic tangent and 
taking the weak limit when $m\to\infty$, we get 
the following inequality for some constant $C>0$ 
\[
\int_{\Omega} 
(\varphi\circ \sigma^n)\varphi\,\, d\nu_{\mathbb{N}}
\geqslant
\tanh(n^{-\alpha})
\geqslant
\frac{C}{|n|^{\alpha}}.
\]
Piecing together the previous inequality and \eqref{mag-zero}
we finally arrived at
\begin{equation}\label{dec-pol-ising}
\frac{C}{|n|^{\alpha}}
\leqslant
\int_{\Omega} 
(\varphi\circ \sigma^n)\varphi\, d\nu_{\mathbb{N}}
- 
\int_{\Omega}\varphi\, d\nu_{\mathbb{N}}
\int_{\Omega}\varphi\, d\nu_{\mathbb{N}}
=
C_{\varphi,\varphi,\nu_{\mathbb{N}}}(n).
\end{equation}

It was shown in \cite{CL14} 
that 
$\mu_{f}\equiv h_fd\nu_{f}$, where $\nu_{f}$ and $h_f$ 
is given by Theorem \ref{Ruelle-Walters}, belongs 
to $\mathcal{G}^{DLR}(f)$.
The authors also shown that for $\alpha>2$ 
the set $\mathcal{G}^{DLR}(f)$ is a singleton 
and therefore $\mu_{f}=\nu_{\mathbb{N}}$. This fact 
together with the continuity of $h_f$ and 
the previous inequality shows that 
$C_{\varphi,\varphi,\mu_f}(n)$ can not decays
exponentially fast. Since $\varphi(x)=x_1$ is in the
Walters class it follows from the 
Theorem \ref{D. Correlation} that $\mathscr{L}_{f}$
has not the spectral gap property.

\section{Appendix}

\subsubsection*{Analyticity on Banach Spaces}

\begin{defnc}
\label{der-ordemsup}
Let $(X, \|\cdot \|_X)$ and $(Y, \|\cdot \|_Y)$ 
be Banach spaces and $U$ an open subset of $X$. 
For each $k\in \mathbb{N}$, 
a function $F:U\rightarrow Y$ is said to be 
$k$-differentiable in $x$ if for $j \in \{1,...,k\}$, 
there exist a $j$-linear bounded transformation 
$D^j F(x): X^j \rightarrow Y$ 
such that
$$
D^{j-1}F(x+v_j)(v_1,...,v_{j-1}) 
- 
D^{j-1}F(x)(v_1,...,v_{j-1}) 
= 
D^j F(x) (v_1,...,v_j) + o_j(v_j)
$$
where $o_j:X \rightarrow Y$ is such that 
$\lim_{v\rar 0} \|o_j(v)\|_Y/(\| v \|_X) = 0$.
\end{defnc}

We say that $F$ has derivatives of all orders in $U$, 
if for any $k\in \mathbb{N}$, and any $x \in U$, 
$F$ is $k$-differentiable in $x$.

\begin{defnc}
\label{analiticidade}
Let $X$ and $Y$ be Banach spaces and $U$ an open subset of $X$. 
A function $F:U\rightarrow Y$ is called analytic on $U$, 
when $F$ has derivatives of all orders in $U$, 
and for each $x\in U$ there exists an open neighborhood 
$U_x$ of $x$ in $U$ such that for all $v\in U_x$, we have that
$$
F(x+v)-F(x)=\sum \limits_{j=1}^{\infty}\frac{1}{n!}D^jF(x) v^j\,,
$$
where $D^j F(x) v^j=D^jF(x)(v,\ldots ,v)$ 
and $D^j F(x)$ is the $j$-th derivative of $F$  in $x$.
\end{defnc}

\noindent
If $F:U\rightarrow Y$ is analytic on $U$, 
then for each $n\in \mathbb{N}$, 
the Taylor expansion of order $n$ is
\begin{equation}\label{Taylor}
    F(x+v)
    =
    F(x)+D^1F(x)v
    +
    \frac{D^2F(x)v^2}{2}+\frac{D^3F(x)v^3}{6}+
    \hdots 
    + 
    \frac{D^nF(x)v^n}{n!}+o_{n+1}(v)\,,
\end{equation}
where
$
o_{n+1}(v)
=
\sum_{j=n+1}^{\infty}(1/n!)D^jF(x) v^j
$ 
satisfies 
$\lim_{v \rar 0} \|o_{n+1}(v)\|_Y/ \|v\|_X^n=0$.

\subsubsection*{Some background on spectral theory}
In this section we list some classical results of Spectral Theory
for more details and proofs see \cite{EL62}. 
Let $X$ be  a Banach space and  $T:X\to X$ a bounded operator, 
we define the spectrum  of the operator $T$ by
$$
\mathrm{spec}(T)=
\{\lambda\in \mathbb{C};(\lambda I-T)^{-1}~\textnormal{do not  exists}\}.
$$
The resolvent set $\rho(T)$ of $T$ is defined as the 
complement of $\mathrm{spec}(T)$. 
The resolvent set of a bounded operator is an open set 
while the spectrum is a compact set.
The spectral radius of the operator $T$ is defined as  
$r(T)=\sup\{|x-y|;~ x,y\in \mathrm{spec}(T)\}$. 
The spectral radius has the following characterization 
\begin{equation}\label{Spec. Radius}
r(T)
=
\liminf_{n}\|T^n\|^{\frac{1}{n}}
=
\lim_{n\to \infty}\|T^n\|^{\frac{1}{n}}.
\end{equation}
It is also known that $\mathrm{spec}(T)\subset B(0,r(T))$
and $\mathrm{spec}(T)=\mathrm{spec}(T^{*})$, 
where $T^*:X^{*}\to X^{*}$ is the adjoint of $T.$

\begin{defnc}\label{Espec.1}
Let   $T:X\to X$ be a bounded linear operator 
and $\gamma$ a  rectifiable Jordan curve that lies 
in $\rho(T)$, then we define the spectral projection 
$\pi_T:X\to X$ as follows 
$$
\pi_T
= 
\dfrac{1}{2\pi}\int_{\gamma}(\lambda I-T)^{-1}d\lambda \,.
$$
\end{defnc}

\begin{obs} \label{Obs. Espec}
If the interior of $\gamma$ lies in  the 
interior of $\rho(T)$ then $\pi_{T}=0$. 
On the other hand if $\mathrm{spec}(T)$ lies entirely 
in the interior of $\gamma$ then $\pi_L=Id$. 
\end{obs}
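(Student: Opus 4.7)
The plan is to deduce both statements directly from the Cauchy integral theorem for operator-valued holomorphic functions applied to the resolvent $R(\lambda) \equiv (\lambda I - T)^{-1}$. First I would recall that $R$ is holomorphic on the open set $\rho(T)$ with values in $L(X,X)$, which follows from the local Neumann expansion $R(\mu) = \sum_{n \geq 0} (\lambda - \mu)^n R(\lambda)^{n+1}$ valid for $|\mu - \lambda| < \|R(\lambda)\|^{-1}$. Since $L(X,X)^*$ separates points of $L(X,X)$, any identity for the operator-valued integral can be verified by pairing both sides with an arbitrary functional $\varphi \in L(X,X)^*$ and applying the scalar Cauchy theorem to the complex-valued holomorphic function $\lambda \mapsto \varphi(R(\lambda))$.

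For the first assertion, if the interior of $\gamma$ lies inside $\rho(T)$, then $R$ is holomorphic on a neighborhood of the closure of $\mathrm{int}(\gamma)$. The scalar Cauchy theorem applied to $\varphi(R(\cdot))$ for every $\varphi \in L(X,X)^*$ yields $\varphi\bigl(\int_\gamma R(\lambda)\, d\lambda\bigr) = 0$, whence $\int_\gamma R(\lambda)\, d\lambda = 0$ and therefore $\pi_T = 0$.

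For the second assertion, choose $s > r(T)$ and let $\Gamma_s$ be the positively oriented circle of radius $s$ centered at the origin. By hypothesis $\mathrm{spec}(T)$ lies in the interior of $\gamma$, and it is also contained in the open disk $B(0, r(T)) \subset \mathrm{int}(\Gamma_s)$. The region bounded by $\gamma$ and $\Gamma_s$ is contained in $\rho(T)$, so the operator-valued Cauchy theorem on this annular domain (verified scalar-wise as above) gives $\int_\gamma R(\lambda)\, d\lambda = \int_{\Gamma_s} R(\lambda)\, d\lambda$. On $\Gamma_s$ the Neumann series $R(\lambda) = \sum_{n \geq 0} T^n/\lambda^{n+1}$ converges uniformly in $\lambda$, since $\limsup_n \|T^n\|^{1/n} = r(T) < s$. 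Interchanging the sum and the integral and using the scalar identity $(2\pi i)^{-1}\int_{\Gamma_s} \lambda^{-(n+1)}\, d\lambda = \delta_{n,0}$, only the $n = 0$ term survives and yields $\pi_T = I$ (with the standard normalization $\frac{1}{2\pi i}$ in the definition of $\pi_T$).

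The main obstacle is purely technical: carefully transferring the scalar Cauchy theorem and the contour-deformation principle to the operator-valued setting. This is handled uniformly by the duality argument above, and the interchange of sum and integral is legitimized by the uniform convergence of the Neumann series on $\Gamma_s$. No new ideas beyond standard operator-valued holomorphic calculus are needed.
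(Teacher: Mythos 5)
The paper states this remark without proof, treating it as a classical fact and pointing to \cite{EL62} for the spectral-theory background, so there is no in-paper argument to compare against; yours is the standard Riesz functional-calculus proof. Both halves of your argument are essentially correct: the duality reduction to the scalar Cauchy theorem handles the first assertion cleanly, and the contour deformation plus term-by-term integration of the Neumann series gives the second.

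One small technical point deserves tightening in the second part. You choose $s > r(T)$, which guarantees that $\mathrm{spec}(T) \subset B(0,r(T)) \subset \mathrm{int}(\Gamma_s)$, but it does not by itself guarantee that the curve $\gamma$ lies inside $\Gamma_s$; yet your phrase \emph{the region bounded by $\gamma$ and $\Gamma_s$} and the contour-deformation step both presuppose $\gamma \subset \mathrm{int}(\Gamma_s)$. Since $\gamma$ is a compact subset of $\mathbb{C}$, this is remedied simply by taking $s > \max\bigl\{ r(T),\ \sup_{\lambda \in \gamma} |\lambda| \bigr\}$, after which the annular region between $\gamma$ and $\Gamma_s$ is indeed contained in $\rho(T)$ and the rest of your argument (uniform convergence of $\sum_{n\geq 0} T^n \lambda^{-(n+1)}$ on $\Gamma_s$, interchange of sum and integral, $\tfrac{1}{2\pi i}\int_{\Gamma_s}\lambda^{-(n+1)}\,d\lambda = \delta_{n,0}$) goes through unchanged. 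You are also right to note the normalization: the factor in Definition~\ref{Espec.1} should read $\tfrac{1}{2\pi i}$ rather than $\tfrac{1}{2\pi}$ for $\pi_T$ to be an idempotent, and your computation uses the correct convention.
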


\begin{prop}\label{Espec.2}
If   $T:X\to X$ is bounded then $\pi_T$ 
is a  projection, i.e, $\pi_T^2=\pi_T$. 
Moreover  $\pi_T$ commutes with $T$
\end{prop}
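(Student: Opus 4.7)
The plan is to establish both properties via the resolvent identity together with standard contour manipulations from the Dunford functional calculus, so essentially this is a routine (but foundational) argument that I will assemble carefully.

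For the commutation relation $T\pi_T=\pi_T T$, my starting point is the algebraic identity $T(\lambda I-T)=(\lambda I-T)T$, valid for every $\lambda\in\mathbb{C}$. Multiplying on both sides by $(\lambda I-T)^{-1}$ for $\lambda\in\rho(T)$ yields
$$T(\lambda I-T)^{-1}=(\lambda I-T)^{-1}T.$$
Since $T$ is a bounded linear operator and the integrand $\lambda\mapsto(\lambda I-T)^{-1}$ is continuous (even holomorphic) on the compact curve $\gamma\subset\rho(T)$, the integral converges in operator norm and $T$ may be interchanged with it. Therefore
$$T\pi_T=\frac{1}{2\pi}\int_{\gamma}T(\lambda I-T)^{-1}\,d\lambda=\frac{1}{2\pi}\int_{\gamma}(\lambda I-T)^{-1}T\,d\lambda=\pi_T T.$$

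For the idempotency $\pi_T^2=\pi_T$, I would invoke the two-contour trick. Choose two rectifiable Jordan curves $\gamma_1$ and $\gamma_2$ inside $\rho(T)$, both enclosing the same portion of $\mathrm{spec}(T)$ as $\gamma$, with $\gamma_1$ lying strictly inside the interior of $\gamma_2$. Because the resolvent $\lambda\mapsto(\lambda I-T)^{-1}$ is holomorphic on $\rho(T)$, Cauchy's theorem for operator-valued integrals shows that the integrals along $\gamma$, $\gamma_1$, and $\gamma_2$ all coincide and equal $\pi_T$. Hence
$$\pi_T^2=\left(\frac{1}{2\pi}\right)^2\int_{\gamma_1}\int_{\gamma_2}(\lambda I-T)^{-1}(\mu I-T)^{-1}\,d\mu\,d\lambda.$$
I would then apply the first resolvent identity
$$(\lambda I-T)^{-1}(\mu I-T)^{-1}=\frac{(\lambda I-T)^{-1}-(\mu I-T)^{-1}}{\mu-\lambda}$$
and use Fubini (justified by the uniform boundedness of the integrand on the compact product contour) to split $\pi_T^2$ into two iterated scalar-weighted integrals. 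In one of them, $\lambda$ is interior to $\gamma_2$, so $\int_{\gamma_2}(\mu-\lambda)^{-1}\,d\mu$ produces the nonzero Cauchy factor and recovers $\pi_T$; in the other, $\mu$ is exterior to $\gamma_1$, so $\int_{\gamma_1}(\mu-\lambda)^{-1}\,d\lambda=0$ kills that term. Collecting the surviving contribution gives $\pi_T^2=\pi_T$.

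The only genuine technical point is the justification of Fubini and of the contour-deformation step in the operator-valued setting; both follow from the fact that $(\lambda I-T)^{-1}$ is a norm-continuous, indeed holomorphic, function on the open set $\rho(T)$, combined with the compactness of $\gamma_1$ and $\gamma_2$. No further ingredients beyond the basic spectral theory recalled in the appendix are needed, and I would cite \cite{EL62} for the underlying holomorphic operator calculus to keep the exposition brief.
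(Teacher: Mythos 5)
Your proof is correct and is the standard Dunford--Riesz functional-calculus argument: commutation follows by pulling $T$ through the contour integral, and idempotency follows from the two-contour trick, the first resolvent identity, Fubini, and the scalar Cauchy integral formula. Note that the paper itself does not prove Proposition~\ref{Espec.2}; it is merely listed among the appendix's background facts and referred to \cite{EL62}, so there is no proof in the paper to compare against. Your write-up supplies exactly the argument one would find in such a reference, and the only technical points you flag (operator-norm continuity of the resolvent, compactness of the contours, and contour deformation in the holomorphic operator calculus) are indeed the right ones to justify Fubini and the contour manipulations. One small remark: the paper's Definition~\ref{Espec.1} writes $\frac{1}{2\pi}$ rather than the usual $\frac{1}{2\pi i}$ in front of the contour integral; your derivation implicitly uses the correct normalization (since $\int_{\gamma_2}\frac{d\mu}{\mu-\lambda}=2\pi i$), so you should either adopt the standard $\frac{1}{2\pi i}$ throughout or flag the paper's normalization as a typo, because with the literal $\frac{1}{2\pi}$ the identity $\pi_T^2=\pi_T$ would acquire a spurious factor of $i$.
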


A subset of $\mathrm{spec}(T)$ which is both open and closed  
in $\mathrm{spec}(T)$ is called a spectral set. 
Let $\Sigma(T)\subset \mathrm{spec}(T)$ be  a spectral set, 
and $\gamma$ a rectifiable  
Jordan curve which lies in $\rho(T)$  
containing $\Sigma(T)$ in its interior. 
Denote by $\pi_{T,\Sigma(T)}$ the spectral 
projection associated with $T$ and $\gamma$,
i.e., 
\[
	\pi_{T,\Sigma(T)}
	=
 	\dfrac{1}{2\pi}\int_{\gamma}(\lambda I-T)^{-1}d\lambda,
\]
where $\gamma$ is any rectifiable Jordan curve surrounding the 
spectral set $\Sigma(T)$, completely contained in the $\rho(T)$ 
and such that any other point in the spectrum is outside $\gamma$.

We use the notation $X_{\Sigma(T)}=\pi_{T,\Sigma(T)}X$ and 
$T_{\Sigma(T)}=T|_{X_{\Sigma(T)}}$. 

\begin{prop}\label{Espec.5}
Let  be $\Sigma(T)$ a spectral set of $spec(T)$ 
then $\mathrm{spec}(T_{\Sigma(T)})=\Sigma(T) $.
\end{prop}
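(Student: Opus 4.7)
The plan is to use the projection $\pi \equiv \pi_{T,\Sigma(T)}$ to decompose the whole Banach space as a topological direct sum of two closed $T$-invariant subspaces, show that the spectrum of $T$ splits accordingly, and finally pin down each piece via Riesz-Dunford contour integrals. Since by Proposition \ref{Espec.2} the operator $\pi$ is a projection commuting with $T$, setting $X_{1}=\pi X=X_{\Sigma(T)}$ and $X_{2}=(I-\pi)X$, both subspaces are closed and $T$-invariant, and we obtain $X=X_{1}\oplus X_{2}$ with $T=T_{1}\oplus T_{2}$, where $T_{1}=T_{\Sigma(T)}$ and $T_{2}=T|_{X_{2}}$.

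The first intermediate step is to verify the general fact $\mathrm{spec}(T)=\mathrm{spec}(T_{1})\cup\mathrm{spec}(T_{2})$. For the inclusion $\mathrm{spec}(T_{i})\subseteq \mathrm{spec}(T)$, I would use that any resolvent $(\lambda I-T)^{-1}$ commutes with $\pi$ and hence leaves $X_{i}$ invariant, so its restriction inverts $\lambda I_{i}-T_{i}$ on $X_{i}$. For the reverse inclusion, given inverses on $X_{1}$ and $X_{2}$ separately, their diagonal direct sum is a bounded inverse of $\lambda I-T$ on $X$.

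The core step is to show $\mathrm{spec}(T_{1})\subseteq \Sigma(T)$. Fix $\lambda$ outside $\gamma$ and define
\[
R_{\lambda}=\frac{1}{2\pi i}\int_{\gamma}\frac{(\mu I-T)^{-1}}{\lambda-\mu}\,d\mu.
\]
Using the resolvent identity $(\mu I-T)^{-1}(\lambda I-T)=I+(\mu-\lambda)(\mu I-T)^{-1}$ and the Cauchy integral $\frac{1}{2\pi i}\int_{\gamma}\frac{d\mu}{\lambda-\mu}=0$ (valid because $\lambda$ is outside $\gamma$), I would derive $R_{\lambda}(\lambda I-T)=\pi$, and symmetrically $(\lambda I-T)R_{\lambda}=\pi$. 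Restricting to $X_{1}$, where $\pi$ acts as the identity, yields $\lambda\in\rho(T_{1})$. By choosing a second rectifiable Jordan curve $\gamma'$ that encloses $\Sigma(T)^{c}=\mathrm{spec}(T)\setminus\Sigma(T)$ and keeps $\Sigma(T)$ outside (possible since both pieces are clopen in the compact set $\mathrm{spec}(T)$), the entirely analogous construction gives $\mathrm{spec}(T_{2})\subseteq \Sigma(T)^{c}$. Combining these two inclusions with the disjoint decomposition $\mathrm{spec}(T)=\Sigma(T)\sqcup\Sigma(T)^{c}$ and the splitting of the previous paragraph forces equality $\mathrm{spec}(T_{1})=\Sigma(T)$.

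The main technical obstacle is the justification of the identity $R_{\lambda}(\lambda I-T)=\pi$: one needs to legitimately exchange the operator product with the contour integral, invoke the resolvent identity inside the integrand, and then deform contours so that $\lambda$ remains outside the new curve in order to apply Cauchy's theorem to the scalar integral. Everything else, including the separating-curve construction, is a consequence of the compactness of $\mathrm{spec}(T)$ and the openness of spectral sets relative to it.
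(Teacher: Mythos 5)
The paper does not actually prove this proposition; it is stated in the appendix as background spectral theory and referred to \cite[p.~98]{EL62}. Your argument is a correct reconstruction of the standard Riesz--Dunford proof of the spectral decomposition theorem, which is precisely what that reference contains: split $X=\pi X\oplus(I-\pi)X$, show $\mathrm{spec}(T)=\mathrm{spec}(T_1)\cup\mathrm{spec}(T_2)$, localize each piece via a contour integral, and combine with disjointness.

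Two small points worth tightening. First, the resolvent identity carries a sign slip: it should read $(\mu I-T)^{-1}(\lambda I-T)=I+(\lambda-\mu)(\mu I-T)^{-1}$, not $I+(\mu-\lambda)(\mu I-T)^{-1}$; with the sign as you wrote it the computation would yield $R_\lambda(\lambda I-T)=-\pi$, not $\pi$ (the conclusion you state is consistent with the correct sign, so this is a typo rather than a flaw in the argument). Second, the contour-integral step by itself only shows that $\mathrm{spec}(T_1)$ lies in the interior region bounded by $\gamma$, which is strictly larger than $\Sigma(T)$; to pass to $\mathrm{spec}(T_1)\subseteq\Sigma(T)$ one also needs $\mathrm{spec}(T_1)\subseteq\mathrm{spec}(T)$ (from the splitting step) together with the fact that $\mathrm{spec}(T)$ meets the interior of $\gamma$ exactly in $\Sigma(T)$. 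Your final ``combining'' paragraph supplies exactly this, so the proof closes correctly; it would just read more cleanly if the intermediate claim were stated as ``$\mathrm{spec}(T_1)$ lies inside $\gamma$'' rather than ``$\mathrm{spec}(T_1)\subseteq\Sigma(T)$.''
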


\begin{prop}\label{Espec.3}
Let $\pi_1,\pi_2:X\to X$ be linear projections, 
then there exists $\epsilon>0$ such that if $\|\pi_1-\pi_2\|<\epsilon$ then 
$\pi_1$ and $\pi_2$ has the same rank, i.e,  $\dim\pi_1(X)=
\dim\pi_2(X)$.

\end{prop}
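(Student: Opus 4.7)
\noindent
\textbf{Proof plan for Proposition \ref{Espec.3}.}
The strategy is the classical one: construct explicitly an invertible operator $U$ close to the identity that intertwines $\pi_{1}$ and $\pi_{2}$, and then use the fact that similar operators have the same rank. Concretely, I would define
\[
U \;=\; \pi_{1}\pi_{2}\,+\,(I-\pi_{1})(I-\pi_{2}).
\]
A direct calculation, using only the idempotency $\pi_{i}^{2}=\pi_{i}$, shows that $U\pi_{2}=\pi_{1}\pi_{2}$ and $\pi_{1}U=\pi_{1}\pi_{2}$, so that $\pi_{1}U=U\pi_{2}$. Therefore, once we know that $U$ is invertible, we obtain $\pi_{2}=U^{-1}\pi_{1}U$, which exhibits $\pi_{1}$ and $\pi_{2}$ as similar operators and in particular forces $\dim\pi_{1}(X)=\dim\pi_{2}(X)$.

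The remaining step is to control $\|U-I\|$ in terms of $\|\pi_{1}-\pi_{2}\|$. Expanding the product and regrouping, one has
\[
U-I \;=\; -\,\pi_{1}(\pi_{1}-\pi_{2}) \,+\, (\pi_{1}-\pi_{2})\pi_{2},
\]
which one verifies by checking that both sides equal $-\pi_{1}-\pi_{2}+2\pi_{1}\pi_{2}$. Taking norms yields
\[
\|U-I\|\;\leq\;\bigl(\|\pi_{1}\|+\|\pi_{2}\|\bigr)\,\|\pi_{1}-\pi_{2}\|,
\]
and since $\|\pi_{2}\|\leq\|\pi_{1}\|+\|\pi_{1}-\pi_{2}\|$, the right-hand side is less than $1$ as soon as $\|\pi_{1}-\pi_{2}\|$ is smaller than some $\epsilon=\epsilon(\|\pi_{1}\|)>0$. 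A standard Neumann series argument then shows that $U=I-(I-U)$ is invertible, with bounded inverse given by the convergent series $\sum_{k\geq 0}(I-U)^{k}$.

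Putting the two pieces together: for $\|\pi_{1}-\pi_{2}\|<\epsilon$, the operator $U$ is a bounded bijection of $X$ onto itself and conjugates $\pi_{2}$ into $\pi_{1}$, so $\pi_{1}(X)=U\pi_{2}(X)$ and this subspace has the same (algebraic, possibly infinite) dimension as $\pi_{2}(X)$. I do not expect a serious obstacle in this argument; the only subtle point is that $\epsilon$ must be allowed to depend on $\|\pi_{1}\|$, because otherwise one cannot uniformly guarantee $\|U-I\|<1$. The algebraic identity for $U-I$ and the identification $\pi_{1}U=U\pi_{2}$ are the heart of the proof, and both are straightforward once the ansatz for $U$ is written down.
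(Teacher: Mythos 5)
Your proof is correct. The paper itself gives no proof of Proposition~\ref{Espec.3}; it is listed in the appendix together with a blanket citation to Lorch's \emph{Spectral Theory} \cite{EL62}, so there is no in-paper argument to compare against. What you have written is the standard self-contained proof via Kato's intertwining operator: $U=\pi_{1}\pi_{2}+(I-\pi_{1})(I-\pi_{2})$ satisfies $\pi_{1}U=U\pi_{2}$ by idempotency, the identity $U-I=-\pi_{1}(\pi_{1}-\pi_{2})+(\pi_{1}-\pi_{2})\pi_{2}$ gives the norm control, and a Neumann series makes $U$ invertible once $\|\pi_{1}-\pi_{2}\|$ is small, so that $\pi_{2}=U^{-1}\pi_{1}U$ and the ranges are linearly isomorphic. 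Your observation that $\epsilon$ must depend on $\|\pi_{1}\|$ is the right reading of the proposition's slightly awkward quantifier structure (one should fix $\pi_{1}$ and let $\pi_{2}$ vary), and it is exactly what is needed where the proposition is actually used in the proof of Lemma~\ref{Lema tecnico}, where the reference projection $\pi_{T}$ is fixed and the nearby projections $\pi_{L}$ vary.
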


\section*{Acknowledgments}
We thank Aernout van Enter 
for calling to our attention the references 
\cite{CO81,dobrushin73}. We also thank Artur Lopes and 
Rodrigo Bissacot for their suggestions and comments 
on the early version of this manuscript.
Leandro Cioletti is partially supported by FEMAT and 
Eduardo Silva is supported by CNPq.

\begin{flushleft}

\bigskip

{\sc 
Leandro Cioletti\\
Eduardo Ant\^onio da Silva
\\[0.2cm]
Departamento de Matem\'atica\\
Universidade de Bras\'ilia\\
Campus Universit\'ario Darcy Ribeiro - Asa Norte\\
70910-900  Bras\'ilia - DF - Brazil.}
\\
{\it cioletti@mat.unb.br}\\
{\it eduardo23maf@gmail.com}
\end{flushleft}

\end{document}